\documentclass[12pt,a4paper,leqno]{article}
\usepackage[german]{babel}
\usepackage{amscd,amsmath,amssymb,amsfonts}

\setlength{\textwidth}{16cm} \setlength{\oddsidemargin}{0 cm}
\setlength{\evensidemargin}{0 cm}\setlength{\topmargin}{0 cm}
\setlength{\textheight}{22cm}

\renewcommand{\thefootnote}{\fnsymbol{footnote}}
\setcounter{section}{-1}

\newtheorem{theorem}[subsubsection]{Theorem}
\newtheorem{proposition}[subsubsection]{Proposition}
\newtheorem{lemma}[subsubsection]{Lemma}
\newtheorem{definition}[subsubsection]{Definition}

\newenvironment{proof}{\medskip\noindent\emph{Proof.}}{\medbreak}
\newenvironment{remark}{\bigskip
\noindent
\addtocounter{subsubsection}{1}
\textbf{Remark~\arabic{section}.\arabic{subsection}.\arabic{subsubsection}}}{\bigbreak}
\newenvironment{remarks}{\bigskip
\noindent
\addtocounter{subsubsection}{1}
\textbf{Remarks~\arabic{section}.\arabic{subsection}.\arabic{subsubsection}}}{\bigbreak}
\newtheorem{corollary}[subsubsection]{Corollary}

\def\z {\mathbb Z}
\def\n {\mathbb N}

\def\ov {\overline}
\def\ni {\noindent}
\def\u {\underline}

\def\cA{\mathcal A}
\def\cG{\mathcal G}
\def\hG{\hat{\mathcal G}}
\def\cT{\mathcal T}
\def\cM{\mathcal M}
\def\cN{\mathcal N}
\def\cO{\mathcal O}
\def\cC{\mathcal C}

\def\ker{\mathrm{ker}}
\def\im{\mathrm{im}}

\def\toto{\rightleftarrows}
\def\da{\downarrow}

\def\qaq{{\quad\mbox{ and }\quad}}

\input xy
\xyoption{all}

\begin{document}

\centerline{\textbf{\large Frobenius gauges and a new theory of $p$-torsion sheaves in
characteristic $p$. I}}

\vskip 5mm

\centerline{by Jean-Marc Fontaine\footnote[1]{Universit\'{e}
Paris-Sud} and Uwe Jannsen\footnote[7]{Universit\"{a}t Regensburg}}

%\centerline{(June 08, 2012)}

\vskip 10mm

\section{Introduction.}\label{sec-introduction}

Let $k$ be a field, $k_s$ a separable closure of $k$ and $G_k={\rm
Gal}(k_s/k)$. Let $X$ be a proper and smooth variety over $k$. This
defines a
morphism of topoi $\pi:X_{\acute{e}t}\to (\mbox{Spec } k)_{\mbox{\scriptsize \'{e}t}} $. Let $l$ be a prime
number and  $F$ be any finite $l$-torsion abelian sheaf over $(\mbox{Spec } k)_{\mbox{\scriptsize \'{e}t}}$, such as $\mathbb Z/l^n\mathbb Z$ or $\mu_{l^n}$ (with $n\in\mathbb N$), then, for any
$j\in\mathbb N$, ${\mathcal F}=R^j\pi_{*}\pi^{*} F$ is again a finite abelian
sheaf over $(\mbox{Spec } k)_{\mbox{\scriptsize \'{e}t}}$.
To know ${\mathcal F}$ amounts to the same as knowing its {\it Galois
module}, that is the
finite abelian group ${\mathcal F}(k_s)$ (the fiber of ${\mathcal F}$ at the
geometric point
$k_s$) with its natural linear and discret action of $G_k$.

If $l$ is different from the characteristic of $k$, this gives rises
to a nice theory,
the {\it \'{e}tale $l$-adic cohomology} which has many useful applications.

Assume now $k$ is a perfect field of characteristic $p>0$ and $l=p$.
Then the \'{e}tale
topology is not big enough for many applications. For instance, the
restriction of $\mu_{p^n}$ to the small \'{e}tale site of $k$ is just
the trivial sheaf.
If we now view $\pi$ as a morphism for the fppf topology
$$\pi : X_{flat}\to (\mbox{Spec } k)_{flat}$$
then ${\mathcal F} = R^1\pi_{*}\pi^{*}\mu_{p^n}$ is a finite commutative
group scheme over
$k$.
To know ${\mathcal F}$ amounts to the same as knowing its {\it
Dieudonn\'{e} module}, an
elementary object whose definition involves some linear algebra.

Now $R^1\pi_{*}\pi^{*}\mu_{p^n}$ is a {\it good} object to consider.
We could also have
consider $R^1\pi_{*}\pi^{*}\mathbb Z/p^n\mathbb Z$ but it would be too small for
some applications. Too
small as well would be the sheaves
$R^j\pi_{*}\pi^{*}\mu_{p^n}$ for $j>1$. Roughly speaking we want to
introduce some
sheaves of $p$-torsion, the $S_n^r$ (for $n,r\in\mathbb N$ (we shall have
$S_n^0=\mathbb Z/p^n\mathbb Z$ and
$S_n^1=\mu_{p^n}$) and to consider the $R^j\pi_{*}\pi^{*}S_n^r$.
They will belong to a
nice class of $p$-torsion sheaves which are classified by their {\it
Dieudonn\'{e}
modules} (elementary objects generalizing the classical Dieudonn\'{e}
modules).
Technically,

i)  we will have to use a topology which is weaker than the flat
topology but also stronger than the \'{e}tale topology,

ii) the generalized Diedonn\'e modules give more complicated objects, so-called
gauges, or rather $\varphi$-(or Frobenius-)gauges.

\medskip
The aim of this paper is to give the definition of these $\varphi$-gauges,
related them to previous constructions like Dieudonn\'{e} modules,
$F$-zips, or displays, and to define a cohomology theory with values in $\varphi$-gauges,
which refines crystalline cohomology. For examples and applications we will
rather concentrate on the case of varieties over fields.

\medskip
In following papers we will construct and discuss the functors between $\varphi$-gauges and
certain $p$-torsion sheaves for the syntomic cohomology, will develop a relative theory,
and will discuss relations with $p$-adic Hodge theory over discrete valuation rings.

\bigskip
{\it Remark}: Let $K$ be a field of characteristic $0$, complete with
respect to a
discrete valuation, with perfect residue field $k$ of characteristic
$p$. Let $\overline K$ be
an algebraic closure of $K$ and $G_K={\rm Gal}(\overline K/K)$. Let
$V$ be a crystalline representation of $G_K$ with non negative
Hodge-Tate weights and $T$
a $G_K$-stable $\mathbb Z_p$-lattice of $V$. Then $V/T$ may be viewed in a
natural way as one of
the fibers of a sheaf $\Gamma$ for the syntomic-\'{e}tale topology
over $\mbox{Spec } {\mathcal
O}_K$ (for instance, if the Hodge-Tate weights are $0$ and $1$ this
is a Barsotti-Tate
group (or $p$-divisible group) over ${\mathcal O}_K$. This $\Gamma$ as a
special fiber
$\Gamma_k$ and, roughly speaking the kernel of the multiplication by
$p^n$ on $\Gamma_k$
is one of these nice $p$-torsion sheaves that we are constructing.
Somehow, the theory we
develop here is the {\it special fiber of classical $p$-adic Hodge theory}.

One application we expect from this theory is to deformation of
Galois representations
coming from algebraic geometry. As we just said, these may often be
extended in a natural
way to a $p$-adic sheaf for the syntomic-\'{e}tale topology. The
knowledge of the $p$-adic
sheaf is equivalent to the knowledge of the Galois representation.
But when we take
sub-quotients killed by a power of $p$, this is no more true in
general and it may be
wise to look at the deformations of the sheaves rather than just at
the deformations of
the Galois representations. In the case of Barsotti-Tate groups, this
idea has already been used by Kisin \cite{Ki}.
Long time ago, just after our joint work on $p$-adic Hodge theory,
William Messing and one of us (JMF) started to think about this kind of
things. What follows is just a continuation of this old work which has
never been completed and we want to thank Bill heartily for the old
and new discussions we had with him on that.

\bigskip

This paper is organized as follows: In section 1, we introduce the notion
of gauges, $\varphi$-modules, and $\varphi$-gauges, which are the basic objects from
linear algebra which give rise to the notion of generalized
Dieudonn\'{e} modules. In section 2, we study how these structures arise from (virtual) $W$-crystals,
where $W = W(k)$ is the ring of Witt vectors for a perfect field of characteristic $p$, and we
discuss properties of the new category in this case. In sections 3 and 4 we consider arbitrary
schemes in positive characteristic and show that our theory contains and extends the theories of
$F$-zips and displays. In sections we discuss the different Grothendieck
topologies that we are going to use and their basic properties. In
section 6 we recall the definitions and properties of the syntomic sheaves of
rings $\mathcal O^{cris}_n$ and we explain how one can use these rings to get a gauge of rings ${\mathcal G}$,
 also called the universal gauge, which is central for our theory.
In section 7 we define the gauge cohomology (cohomology theory with values in the category of gauges)
which is a refinement of crystaline cohomology,
and we give some first properties.

\section{Graded objects, gauges, $\varphi$-modules, $\varphi$-gauges, and $\varphi$-rings.}\label{sec-1}

\subsection{\it Graded objects and $p$-gauges}\label{ssec-1.1}
By a graded object in an abelian category $\cA$ we mean a $\z$-graded object, which is
just a collection $A=(A^n)$ of objects indexed by $\z$. If direct sums exist in $\cA$, we
may also think of the direct sum $\oplus_nA^n$. A morphism $f: A \rightarrow B$ of graded
objects of degree $d$ is a collection of morphisms $f_n: A^n \rightarrow B^{n+d}$.
A morphism of graded objects is a morphism of degree 0. Graded objects in $\cA$ form again an abelian category.

\medskip
Fix a prime $p$. A $p$-gauge in $\cA$ is a graded object $M$ in $\cA$ together with
a morphism $f$ of degree 1 and a morphism $v$ of degree $-1$
$$
\ldots \toto M^{r-1} \mathop{\toto}\limits^{f}_{v} M^r
\mathop{\toto}\limits^{f}_{v} M^{r+1} \toto \ldots
$$
with $fv = p = vf$. Morphisms of $p$-gauges are morphisms $\alpha$ of graded object which
are compatible with $f$ and $v$ (i.e., $\alpha f = f \alpha$ and $\alpha v = v \alpha$).

\medskip
If, for example, $\cA$ is the category of modules over a ring, then a $p$-gauge is
simply a module over the commutative graded ring $D(R)=R[f,v]/(fv-p)$, where $R[f,v]$ is the
free graded ring generated by $f$ in degree 1 and $v$ in degree $-1$.

\medskip
For $-\infty \leq a \leq b \leq \infty$ we say that the $p$-gauge $(M,f,v)$ is concentrated in
the interval $[a,b]$, if $v$ is an isomorphism to the left of $M^a$ and $f$ is an isomorphism
to the right of $M^b$. If $a$ and $b$ are finite, then such a $p$-gauge is just determined by
the finite diagram
$$
M^a \toto \ldots \toto M^{r-1} \mathop{\toto}\limits^{f}_{v} M^r
\mathop{\toto}\limits^{f}_{v} M^{r+1} \toto \ldots \toto M^b\,,
$$
because everything is determined outside the interval $[a,b]$.

\medskip Call a $p$-gauge over a ring $R$ (i.e., in the category of $R$-modules) of finite type,
if it is finitely generated as a module over $D(R)=R[f,v]/(fv-p)$.

\begin{lemma} Let $R$ be a noetherian ring, and let $M$ be a $p$-gauge of $R$-modules.
Then the following conditions are equivalent.

\smallskip (a) $M$ is of finite type.

\smallskip (b) Each $M^r$ is finitely generated as an $R$-module, and $M$ is concentrated in
a finite interval (i.e., $f: M^r \rightarrow M^{r+1}$ is an isomorphism for $r>>0$ and
$v: M^r \rightarrow M^{r-1}$ is an isomorphism for $r<< 0$).
\end{lemma}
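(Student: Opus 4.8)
The plan is to prove the two implications separately; the noetherian hypothesis enters only in $(a)\Rightarrow(b)$, and there only through the ascending chain condition on submodules of a finitely generated $R$-module. The basic bookkeeping device is the graded structure of $D(R)=R[f,v]/(fv-p)$: reducing monomials by the relation $fv=p$ shows that $D(R)$ is a free $R$-module which in each degree $n$ has rank one, with basis $f^n$ for $n\ge 0$ and $v^{-n}$ for $n\le 0$. In particular, if $x\in M^{d}$ is homogeneous, then the degree-$r$ component of $D(R)\cdot x$ is $D(R)_{r-d}\,x$, a cyclic $R$-module; and since $R$ sits in degree $0$ of the commutative ring $D(R)$, the operators $f$ and $v$ on $M$ are $R$-linear.

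For $(b)\Rightarrow(a)$ I would choose finite sets of $R$-generators of $M^a,M^{a+1},\dots,M^b$. Because $f\colon M^r\to M^{r+1}$ is an isomorphism for $r\ge b$, every element of $M^r$ with $r\ge b$ is of the form $f^{r-b}y$ with $y\in M^b$, hence lies in $D(R)\cdot M^b$; symmetrically, using that $v$ is an isomorphism to the left of $M^a$, every $M^r$ with $r\le a$ lies in $D(R)\cdot M^a$. Therefore the union of the chosen generating sets generates $M$ over $D(R)$. No chain condition is used here.

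For $(a)\Rightarrow(b)$ I start from a finite generating set and replace each generator by its (finitely many) homogeneous components, so that $M$ is generated over $D(R)$ by homogeneous elements $x_1,\dots,x_m$ with $x_i\in M^{d_i}$; set $a=\min_i d_i$ and $b=\max_i d_i$. Then $M^r=\sum_i D(R)_{r-d_i}\,x_i$ is a finite sum of cyclic $R$-modules, so each $M^r$ is finitely generated over $R$ — half of (b). For $r\ge b$ all exponents $r-d_i$ are $\ge 0$, so $M^r=\sum_i R\,f^{\,r-d_i}x_i$ and $f\colon M^r\to M^{r+1}$ is surjective; likewise $v\colon M^r\to M^{r-1}$ is surjective for $r\le a$. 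Now in the chain of surjections $M^b\twoheadrightarrow M^{b+1}\twoheadrightarrow M^{b+2}\twoheadrightarrow\cdots$ of finitely generated modules over the noetherian ring $R$, the kernels of the composites $M^b\to M^{b+j}$ form an ascending chain of submodules of the noetherian module $M^b$, hence stabilize; a short diagram chase then shows $f\colon M^r\to M^{r+1}$ is an isomorphism for all $r\gg 0$. Applying the same argument to $M^a\twoheadrightarrow M^{a-1}\twoheadrightarrow\cdots$ shows $v\colon M^r\to M^{r-1}$ is an isomorphism for $r\ll 0$, so $M$ is concentrated in a finite interval.

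The only step with real content is this last one: everything before it is formal manipulation of the graded ring $D(R)$, whereas turning the chains of surjections near $\pm\infty$ into eventual isomorphisms is exactly where noetherianity of $R$ is indispensable. (Indeed, over a non-noetherian $\mathbb F_p$-algebra $R$ one can take $M=D(R)/J$ for a suitable homogeneous ideal $J$ to obtain a $D(R)$-module generated by a single element whose graded pieces are cyclic but never stabilize, so that $(a)$ holds while $(b)$ fails.)
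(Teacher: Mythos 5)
Your proof is correct and follows essentially the same route as the paper's: homogenize the generators, observe that $D(R)_n$ is a cyclic $R$-module to get finite generation of each $M^r$ and eventual surjectivity of $f$ (resp.\ $v$), then use noetherianity of $M^b$ to turn the chain of surjections into eventual isomorphisms; the converse is the same formal observation. The only difference is one of exposition: you spell out the stabilization step via the ascending chain of kernels of $M^b\to M^{b+j}$, where the paper merely says the chain of surjections ``becomes stationary'' (and in fact writes ``surjective'' where ``isomorphism'' is meant), so your version is slightly more explicit at precisely the point where the noetherian hypothesis is used.
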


\begin{proof} (Compare the positively graded case in \cite{GW}, Lemma 13.10.)
Assume (a) and let $m_1,\ldots,m_r$ be generators of $M$, without restriction each $m_i$ homogenous
of degree $d_i\in\z$, say. Let $d_{min}$ be the minimum of the $d_i$, and let
$d_{max}$ be their maximum. Then every element of $M^n$ is a $R$-linear combination of
the elements in the set
$$
S_n = \{\, f^am_i\,  \mid \, a \geq 0,\, n = a + d_i \,\}\cup \{ \, v^bm_i\, \mid \, b \geq 0,\, n = d_i - b \,\}\,.
$$
(Note that $fv = p =vf$.)
Since
$$
d_{min} - n \leq b = d_i-n \leq d_{max} - n \quad\mbox{ and }\quad n - d_{max} \leq a - d_i \leq n - d_{min}\,,
$$
these are finitely many elements, which shows the first claim in (a).

For the second claim we first note that $f: M^n \rightarrow M^{n+1}$ is surjective for $n\geq d_{max}$.
In fact, for the elements $v^bm_i$ with $n+1 = d_i-b$ and $b\geq 0$ in the generating set $S_{n+1}$ above
we would have $d_i-b = n + 1 \geq d_{max} +1$, i.e., $d_i \geq d_{max} + 1$, a contradiction.
Hence these elements do not appear. Moreover, for the elements $f^am_i$ with $a + d_i = n+1 \geq d_{max} + 1$
we must have $a \geq d_{max} - d_i + 1 \geq 1$.

Next, for $d=d_{max}$ the sequence of surjections
$M^d \mathop{\twoheadrightarrow}\limits^{f} M^{d+1} \mathop{\twoheadrightarrow}\limits^{f} M^{d+2} \mathop{\twoheadrightarrow}\limits^{f}\ldots $
becomes stationary, because $R$ is noetherian and all $M^i$ are finitely generated. Thus $f: M^n \rightarrow M^{n+1}$
is surjective for $n>>0$. In a similar (dual) way one proves that $v: M^{n+1} \rightarrow M^n$ is an isomorphism for $n<<0$.

The converse implication from (b) to (a) is easier: If $M$ is concentrated in the finite interval $[a,b]$,
and $F$ is a finite generating set for the $R$-module $M^a\oplus M^{a+1} \oplus \ldots \oplus M^{b-1} \oplus M^b$,
then $F$ is a generating set for the graded $D(R)$-module $M$.
\end{proof}

\medskip
In the following we will always fix a prime $p$ and will omit it in the notation.
Of course we could make a more general definition and replace the multiplication by $p$
by any natural transformation $t: id_\cA \rightarrow id_\cA$.

\subsection{\it $\varphi$-modules and $\varphi$-gauges}\label{ssec-1.2}

\medskip
For any gauge $(M,f,v)$ in $\cA$ we define
$$
M^{+\infty}=\lim\limits_{\stackrel{\textstyle \longrightarrow}{r\mapsto +\infty}}M^{r}\quad\hbox{ and }\quad
M^{-\infty}=\lim\limits_{\stackrel{\textstyle \longrightarrow}{r\mapsto -\infty}}M^r\,,
$$
where the transition morphisms are given by the morphisms $f$ and the morphisms $v$, respectively,
and where these objects either exist as direct limits in $\cA$ or as objects in the $Ind$-category of $\cA$.

\medskip
Now let $\sigma: \cA \rightarrow \cA$ be some endomorphism of $\cA$. A $\varphi$-module (with respect to $\sigma$)
is a gauge $(M,f,v)$ together with a morphisms
$$
\varphi:\;\sigma(M^{+\infty}) \longrightarrow M^{-\infty}\,.
$$
Morphisms of $\varphi$-modules are morphisms $\alpha$ of gauges which are compatible with $\varphi$
(i.e., for the morphisms $\alpha^{+\infty}$ and $\alpha^{-\infty}$ induced by $\alpha$ on the limit terms
one has $\varphi \sigma(\alpha^{+\infty}) = \alpha^{-\infty} \varphi$). A $\varphi$-gauge is a $\varphi$-module
for which $\varphi$ is an isomorphism. If the formation of inductive limits is exact, then it is easy to see
that the $\varphi$-modules form an abelian category if $\sigma$ is right-exact, and that the category of
$\varphi$-gauges is abelian if $\sigma$ is exact.

\subsection{\it Tensor products of graded modules and gauges}\label{ssec-1.3}
To fix ideas let ${\mathcal T}$ be a topos.  Assume that ${\mathcal T}$ is the topos
of sheaves over ${\mathcal C}$ for the topology $E$.

If ${\mathcal F}$ is a sheaf of sets (or groups, or ...), a {\it global section
of ${\mathcal F}$} s a collection $(s_U)_{U\in Ob{\mathcal C}}$ such that, for
any morphism $f: V\to U$ of ${\mathcal C}$, we have $f^{-1}(s_U)=s_V$.  The
global sections of ${\mathcal F}$ form a set (resp. a group,...) $\Gamma({\mathcal
F})$. If ${\mathcal C}$ has a final object $S$, we have $\Gamma({\mathcal F})={\mathcal F}(S)$.

Below we will consider groups, rings, modules in $\mathcal T$, but for simplicity, we will
omit $\mathcal T$ and do as if we were just using plain groups, rings, modules (which would
correspond to the trivial topos), by thinking of local sections,.... at least when the extension of the things
considered to the general setting is completely straightforward.
In the whole paper, a graded ring is a commutative graded ring
with grading indexed by $\mathbb Z$.

Let $R=\oplus_{r\in\mathbb Z}R^{r}$ such a graded ring .
A {\it graded $R$-module} is an $R$-module $M$ together with a decompostion
$M=\oplus_{r\in\mathbb Z}M^r$ of
$M$ in to a direct sum of abelian groups such that, if $r,s\in\mathbb Z$,
$\lambda\in R^r$ and
$x\in M^s$, then $\lambda x\in M^{r+s}$. The graded $R$-modules form
a $\Gamma(R^0)$-linear abelian
category.

Let $M$ and $N$ be two graded $R$-modules. For $r\in\mathbb Z$, set
$(M\otimes_{R^0}N)^r=\oplus_{i+j=r}M^i\otimes_{R^0}N^j$. Let $L^r$ be
the sub-group of
$(M\otimes_{R^0}N)^r$ (locally) generated by the $\lambda x\otimes
y-x\otimes\lambda y$, for
$\lambda\in R^i$, $x\in M^j$, $y\in N^k$ and $i+j+k=r$. We have
$M\otimes_{R^0}N=\oplus_{r\in\mathbb Z}(M\otimes_{R^0}N)^r$ and
$$M\otimes_{R}N=\oplus_{r\in\mathbb Z}(M\otimes_{R}N)^r\hbox{ with }(M\otimes_{R}N)^r
=(M\otimes_{R^0}N)^r/L^r$$
This endows $M\otimes_RN$ with a structure of graded $R$-module. In this way,
graded-$R$-modules form a tensor category.

If $M$ is a graded $R$-module, for any $i\in\mathbb Z$,{\it  the $i$-th
Tate twist of $M$} is the
graded $R$-module whose underlying $R$-Module is $M$ and with
$M(i)^r=M^{r+i}$. The
functor $M\mapsto M(1)$, from the category of graded $R$-Modules to itself,
is an equivalence of categories, with $M\mapsto M(-1)$ as a quasi-inverse.
We have $M(0)=M$, $M(i+j)=M(i)(j)$ ($\forall i,j\in\mathbb Z$) and
$M(i)=R(i)\otimes_RM$.

A  {\it free graded $R$-module of rank $1$} is a graded $R$-module
isomorphic to an $R(i)$, for some $i\in\mathbb Z$. A graded $R$-module $M$
is called {\it free} if it can be written
as a direct sum of free $R$-modules of rank $1$.
For any graded $R$-module $M$ there is a canonical bijection
$$
Hom(R(i),M) \mathop{\longrightarrow}\limits^{\sim} \Gamma(M^{-i})\,,
$$
sending a morphism $R(i) \rightarrow M$ to the image of $1 \in \Gamma(R(i)^{-i})=R$.
Thus an
$R$-Module can be written as a quotient of a free graded $R$-module
if and only if it is generated by global sections (this is always the
case, if the topos is trivial !).

If $R$ is a graded ring, and $I \subset R$ is a graded ideal, i.e., generated by
homogeneous elements, then $R/I$ is naturally a graded ring. We can apply this to
the category of gauges: if $R_0$ is a ring (in $\cT$), then the category of $R_0$-gauges
in $\cT$ is equivalent to the category of graded $D(R_0)$-modules, where $D(R_0)=R_0[f,v]/(fv-p)$
is the graded ring (in $\cT$) defined similarly as in section 1.1. Therefore there is a natural
tensor product $M\otimes N$ of $R_0$-gauges $M$ and $N$ in $\cT$, defined as the tensor product $M\otimes_{D(R_0)}N$.

One easily sees that, for $R_0$-gauges $M$ and $N$, one has canonical isomorphisms
$$
(M\otimes_{D(R_0)} N)^{+\infty}\cong M^{+\infty}\otimes_{R_0}N^{+\infty} \quad\mbox{  and  }\quad
(M\otimes_{D(R_0)} N)^{-\infty}\cong M^{-\infty}\otimes_{R_0} N^{-\infty}\,.
$$
As a consequence, if $\sigma$ is an endomorphism of the category of $R_0$-modules, and is
a tensor morphism, there is a canonical tensor product on the category of $\varphi$-gauges
of $R_0$-modules with respect to $\sigma$, by endowing $M\times N$ with the following $\varphi$:
$$
\sigma(M^{+\infty}\otimes_{R_0}M^{+\infty}) \cong \sigma(M^{+\infty})\otimes_{R_0}\sigma(M^{+\infty}) \mathop{\longrightarrow}\limits^{\varphi_M\otimes\varphi_N} M^{-\infty}\otimes_{R_0}N^{-\infty}\,.
$$
Obviously, this respects the subcategory of $\varphi$-gauges

\subsection{\it $\varphi$-rings, and $\varphi$-modules and $\varphi$-gauges over them.}\label{ssec-1.4}

\medskip
We need a certain generalization of the considerations in the previous section.
As there, we consider objects (rings, modules, etc.) in some topos $\cT$, and suppress the mentioning of $\cT$.
Consider a triple $(R,f,v)$ where $R=\oplus_{n\in\mathbb Z}R^n$ is a $\z$-graded
commutative ring (in $\cT$) with $f\in \Gamma(R^1)$ and $v\in \Gamma(R^{-1})$.
Set
$$
R^{+\infty}=R/(f-1)\quad\hbox{ and }\quad R^{-\infty}=R/(v-1)
$$
Observe that we may identify these two rings, as $R^0$-modules, to
the direct limits
$$R^{+\infty}=\lim\limits_{\stackrel{\textstyle \longrightarrow}{r\mapsto +\infty}}R^{r}\quad\hbox{ and }\quad
R^{-\infty}=\lim\limits_{\stackrel{\textstyle \longrightarrow}{r\mapsto -\infty}}R^r$$
(the transition maps being given by multiplication by $f$ (resp $v$).

We define {\it  a $\varphi$-ring}  as a quadruple $(R,f,v,\varphi)$
with $(R,f,v)$ as above and
$$
\varphi:R^{+\infty}\longrightarrow R^{-\infty}
$$
a morphism of rings.
If this is an isomorphism, we call $(R,f,v,\varphi)$ a perfect $\varphi$-ring.

\bigskip
Let $R=(R,f,v,\varphi)$ a $\varphi$-ring. If $M$ is a graded
$R$-module, we may consider the $R^{+\infty}$-module
$M^{+\infty}=R^{+\infty}\otimes_R M= M/(f-1)M$ and the $R^{-\infty}$-Module
$M^{-\infty}=R^{-\infty}\otimes_R M= M/(v-1)M$. Observe that, as
$R^0$-modules, we also have the identifications
$$
M^{+\infty}=\lim\limits_{\stackrel{\textstyle \longrightarrow}{r\mapsto +\infty}}M^{r}\quad\hbox{ and }\quad
M^{-\infty}=\lim\limits_{\stackrel{\textstyle \longrightarrow}{r\mapsto -\infty}}M^r
$$

A {\it  $\varphi$-$R$-module} is a pair $(M,\varphi)$ where $M$ is a graded
$R$-module and
$$
\varphi:M^{+\infty} \longrightarrow M^{-\infty}
$$
is a morphism of groups such that
$\varphi(\lambda x)=\varphi(\lambda)\varphi(x)$ for $\lambda\in
R^{+\infty}$ and $x\in
M^{+\infty}$.

A {\it $\varphi$-$R$-gauge} is a  $\varphi$-$R$-module $(M,\varphi_M)$ such that
the canonical morphism of $R^{-\infty}$-modules
$$
\varphi'_M: R^{-\infty} {}_{\varphi_R}\mathop{\otimes}\limits_{R^{+\infty}}M^{+\infty}\longrightarrow M^{-\infty}
$$
induced by $\varphi_M$ is an isomorphism. If $(R,\varphi_R)$ is a perfect $\varphi$-ring,
this holds if and only if $\varphi_M$ is an isomorphism.

With obvious definitions of morphisms, the graded $R$-modules and the
$\varphi$-$R$-modules are abelian categories with enough injectives.
As a full sub-category of the category of  $\varphi$-$R$-modules, the
category of $\varphi$-$R$-gauges is stable under direct sums and direct factors.
If $R$ is a perfect $\varphi$-ring, it is also stable under kernels and cokernels
and therefore also abelian (here we use the exactness of the formation
of $M^{+\infty}$ and $M^{-\infty}$).

If $M$ and $N$ are two  $\varphi$-$R$-modules, we have
$$
(M\otimes_RN)^{+\infty}=M^{+\infty}\otimes_{R^{+\infty}}N^{+\infty}
\quad\mbox{ and }\quad
(M\otimes_RN)^{-\infty}=M^{-\infty}\otimes_{R^{-\infty}}N^{-\infty}\,.
$$
Therefore, the morphism $\varphi\otimes\varphi$ endows $M\otimes_RN$
with a structure of  $\varphi$-$R$-Module (which is a $\varphi$-gauge if $M$ and
$N$ are $\varphi$-gauges).  With this tensor product,  $\varphi$-$R$-modules and
$\varphi$-$R$-gauges become tensor categories.

For any graded $R$-module $M$ and any $i\in\mathbb Z$, we have
$M(i)^{+\infty}=M^{+\infty}$ and $M(i)^{-\infty}=M^{-\infty}$. This
allows us to extend the definition of Tate twists to
$\varphi$-$R$-modules and $\varphi$-$R$-gauges in an obvious way.

\section{\bf Gauges over a perfect field.}\label{sec-2}

\subsection{\it Preliminaries}\label{ssec-2.1}
\medskip
Assume $R$ is a graded ring with $f\in R^1$ and $v\in R^{-1}$ such
that $R=R^{0}[f,v]$.

If $fv$ is not a zero divisor in $R^{0}$, the natural maps
$R^0\to R^{+\infty}$ and $R^0\to R^{-\infty}$ are
isomorphisms and to give a map $\varphi:R^{+\infty}\to R^{-\infty}$
such that $(R,f,v,\varphi)$ is a $\varphi$-ring is the same as giving
an automorphism $\varphi$ of the ring $R$.

If moreover $fv$ is invertible in $R^0$, the correspondence $M\mapsto
M^0$ induces an equivalence between the category of graded $R$-modules
and the category of $R^0$-modules. Let $R^0[\varphi]$ the
(non-commutative if $\varphi\not=id_{R^{0}}$) ring generated by $R^0$ and an
element $\varphi$, with the relation that
$\varphi\lambda=\varphi(\lambda).\varphi$, for $\lambda\in R^0$.
Similarly, let $R^0[\varphi,\varphi^{-1}]$ the ring generated over
$R^0[\varphi]$ by an element $\varphi^{-1}$ with the relations
$\varphi.\varphi^{-1}=\varphi^{-1}\varphi=1$ and
$\lambda\varphi^{-1}=\varphi^{-1}\varphi(\lambda)$ for $\lambda\in R^0$.
The previous equivalence of categories induces an equivalence between

-- $\varphi$-$R$-modules and left $R^0[\varphi]$-modules,

-- $\varphi$-$R$-gauges and left $R^0[\varphi,\varphi^{-1}]$-modules.

In these equivalences, the tensor product becomes the tensor product
over $R^0$.
\bigskip

The situation is slightly more complicated when $fv$ is not
invertible. This is the situation for our generalized Dieudonn\'{e}
modules~: We chose a perfect field $k$ of characteristic $p$ and we let
$W=W(k)$ be the ring of Witt vectors over $k$ and, for any $n \in \mathbb N$,
we let $W_n = W_n(k) = W/p^n$ be the ring of $n$-th truncated Witt vectors.
To cover both cases, we write $W_n$ for $n \in \mathbb N \cup \{\infty\}$, where $W_\infty:= W$.
Then $W_n$-gauges, i.e., gauges of $W_n$-modules, are simply graded modules
over the ring $D_n =D_n(k) = W_n[f,v]/(fv-p)$.

% our graded ring which is defined by
%the conditions hat $D^0=W$ and $fv=p$. By a {\it graded module over
%$k$}, we mean a graded $D$-module. The graded modules over $k$
%form an abelian $W$-linear category.

%A graded module $M$ over $k$ may be viewed as a diagram of $W$-modules:
%$$\ldots M^{r-1} \mathop{\phantom{v}}\limits^{\stackrel{\scriptstyle f}{\textstyle\rightarrow}}_{\stackrel{\textstyle\leftarrow}{v}}  M^r %M^{r+1}\mathop{\phantom{v}}\limits^{\stackrel{\scriptstyle f}{\textstyle\rightarrow}}_{\stackrel{\textstyle\leftarrow}{v}}\ldots$$
%such that $fv=vf=p$.

\medskip
We turn $D_n$ into a perfect $\varphi$-ring by taking for $\varphi$ the absolute Frobenius
$\sigma: D_n^{+\infty} =W_n \longrightarrow W_n = D^{-\infty}$, which is an isomorphism
by perfectness of $k$.

%In this paper, a {\it $\varphi$-module over $k$} is a
%$(D,\varphi)$-module, hence a graded module over $k$ equipped with a
%map $\varphi:M^{+\infty}\to M^{-\infty}$, semi-linear with respect to
%the absolute Frobenius on $W$.  A {\it gauge over $k$} is a
%$D$-gauge, hence it is a $\varphi$-module $M$ over $k$ such that
%$\varphi:M^{+\infty}\to M^{-\infty}$ is an isomorphism.

Hence a $\varphi$-$W_n$-module is a $W_n$-gauge $(M,f,v)$ together with
a group homomorphism $\varphi: M^{+\infty} \rightarrow M^{-\infty}$ which is
semi-linear with respect to the absolute Frobenius $\sigma$ on $W_n$.

%A {\it tft graded module} (resp. {\it $\varphi$-module}, resp. {\it gauge})
%{\it over $k$} is a graded module (resp. a
%$\varphi$-module, resp. a gauge) over $k$ such that the underlying
%$D$-module is
%of $p$-torsion and of finite type. We denote by ${\mathcal Gr}(k)$ (resp.
%${\mathcal M}(k)$, ${\mathcal G}(k)$) the
%category of graded modules (resp. $\varphi$-modules, resp. gauges) over
%$k$ and by $\mathcal Gr_{tft}(k)$ (resp. $\mathcal M_{tft}(k)$, ${\mathcal G}_{tft}(k)$) the
%full sub-category whose objects are tft.

%If $M$ is a tft graded module over $k$, there
%are integers $a\leq b$ such that

%-- multiplication by $f$ induces an isomorphism $M^{r}\to M^{r+1}$,
%for $r\geq b$,

%-- multiplication by $v$ induces an isomorphism $M^{r}\to M^{r-1}$,
%for $r\leq a$.

%We denote $\mathcal Gr^{[a,b]}_{tft}$ (resp. $\mathcal M^{[a,b]}_{tft}$,resp. $\mathcal G^{[a,b]}_{tft}$) the full subcategory of
%those tft
%graded modules (resp.
%$\varphi$-modules, resp. gauges) over $k$ which
%satisfy the previous condition.

We say that a $W_n$-gauge or $\varphi$-$W_n$-gauge $M$ is of finite type, if the
associated $D_n$-module is finitely generated. As we have seen in section \ref{ssec-1.1},
$M$ is then concentrated in a finite interval $[a,b]$ and is just given by the
finite diagram of finitely generated $W_n$-modules
$$
M^a \toto \ldots \toto M^{r-1} \mathop{\toto}\limits^{f}_{v} M^r
\mathop{\toto}\limits^{f}_{v} M^{r+1} \toto \ldots \toto M^b\,,
$$
such that $fv=vf=p$ (For $r>b$ , we use the multiplication by $f^{b-r}$ to identify
$M^b$ to $M^r$. Similarly, for $r<a$, we use multiplication by
$v^{a-r}$ to identify $M^a$ to $M^{r}$). The structure of a $\varphi$-module is
obtained by adding a $\sigma$-semi-linear map  $\varphi: M^{ b}\to M^a$
(because, we have canonical identifications
$M^{+\infty}=M^b$ and $M^{-\infty}=M^a$). The $\varphi$-module $M$
will be a gauge if and only if $\varphi:M^{b}\to M^a$ is bijective.

\bigskip
Let $\cG^{[a,b]}_{ft}(W_n)$ is the category of finite-type $W_n$-gauges which are
concentrated in $[a,b]$. If $M$ is an object of $\cG^{[a,b]}_{ft}(W_n)$ and $N$ an object of
$\cG_{ft}^{[a',b']}(W_n)$,
then $M\otimes N$ is an object of $\cG_{ft}^{[a+a',b+b']}(W_n)$.
If $M$ is an object of $\cG^{[a,b]}_{ft}(W_n)$ and $i\in\mathbb Z$, then $M(i)$ is an object of
$\cG_{ft}^{[a-i,b-i]}(W_n)$.

\subsection{\it The standard construction: $p$-divisibility of Frobenius}\label{ssec-2.2}

\medskip
The idea of gauges is related to the following construction, going back to ideas
of Mazur and Kato. Let $B$ be the fraction field of $W$.

Let $D$ be an isocrystal over $k$, i.e., a finite dimensional $B$-vector space with
a $\sigma$-semi-linear isomorphism $\phi: D \rightarrow D$, and let $M$ be a
lattice in $D$, i.e., a finitely generated $W$-submodule with $M\otimes_WB\cong D$.
We call such object a virtual crystal over $k$ (and a crystal if $\phi(M) \subset M$).
For $r\in \z$ define
$$
M^r = \{\,m \in M \,\mid\, \phi(m) \in p^rM\,\}\,,
$$
let $f: M^r \rightarrow M^{r+1}$ be the multiplication by $p$, and let
$v: M^{r+1} \rightarrow M^r$ be the inclusion. Then $(M^\cdot,f,v)$ is a $W_n$-gauge.
Moreover, by finite generation of $M$ one has integers $a \leq b$ with
$p^bM \subseteq \phi(M) \subseteq p^aM$. The last inclusion implies that $M \subset M^a$
and hence the inclusions $M^r \subset M \subset M^a \subset M$ are isomorphisms
for $r \leq a$. The first inclusion implies that $M^r = p^{r-b}M^b$ for $r \geq b$.
In fact, if $x \in M^r$, i.e., $\phi(x) = p^ry = p^{r-b}p^by$ with $y\in M$, then
$p^by = \phi(z)$ with $z\in M$. This implies $\phi(x) = \phi(p^{r-b}z)$
and hence $x = p^{r-b}z$, where $z \in M^b$. We conclude that the gauge is concentrated
in the interval $[a,b]$.

Moreover, we get a canonical structure of a $\varphi$-$W$-gauge. In fact, we have natural
$\sigma$-semi-linear homomorphisms
$$
\varphi_r: M^r \rightarrow M = M^{-\infty}\,,
$$
by sending $x\in M^r$ to $p^{-r}\phi(x)$.
These are compatible ($\varphi_{r+1}(fx) = p^{-r-1}\varphi(px) = p^{-r}\varphi(x) = \varphi_r(x)$)
and thus define a $\sigma$-semi-linear morphism
$$
\varphi: M^{+\infty} = \lim\limits_{\stackrel{\textstyle \longrightarrow}{r\mapsto +\infty}}M^{r} \longrightarrow M^{-\infty}\,,
$$
which is easily seen to be an isomorphism. Moreover, one sees

\begin{theorem}\label{thm.virtual.crystals.1}
The above construction gives a fully faithful embedding of categories
$$
( \mbox{ virtual crystals } (D,\phi,M) \mbox{ over } k ) \longrightarrow
( \mbox{ finite type } \varphi\mbox{-}W\mbox{-gauges with free components } )\,.
$$
\end{theorem}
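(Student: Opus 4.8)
The plan is to establish, in turn, that the construction takes values in the stated category, that the induced functor is faithful, and that it is full. For the first point: $M$ is a finitely generated torsion-free module over the discrete valuation ring $W$, hence free of finite rank, and each component $M^r$, being a submodule of $M$, is again torsion-free and finitely generated (as $W$ is noetherian), hence free of finite rank; since the gauge $(M^\cdot,f,v)$ was shown above to be concentrated in the finite interval $[a,b]$, the Lemma in section~\ref{ssec-1.1} shows that the associated $D(W)$-module is finitely generated. So $(M^\cdot,f,v,\varphi)$ is a finite-type $\varphi$-$W$-gauge with free components.

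Next I would make the functor explicit on morphisms and check faithfulness. A morphism $g:(D,\phi,M)\to(D',\phi',M')$ of virtual crystals is a $B$-linear map with $g\phi=\phi'g$ and $g(M)\subseteq M'$; it restricts to maps $g^r:M^r\to (M')^r$ (if $\phi(m)\in p^rM$ then $\phi'(g(m))=g(\phi(m))\in p^rM'$), which are compatible with $f$ (multiplication by $p$), with $v$ (inclusion), and with $\varphi$ (because $\varphi_r(x)=p^{-r}\phi(x)$ commutes with $g$); this defines the functor on morphisms. Under the identification $M^{-\infty}=M$ coming from $M^r=M$ for $r\le a$, the resulting map on the component $M^{-\infty}$ is just $g|_M$, and since $M$ spans $D$ over $B$ the map $g$ is recovered from the morphism of $\varphi$-gauges; hence the functor is faithful.

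Fullness is the main point, and the key is to reconstruct the isocrystal from the $\varphi$-gauge. Given a morphism $\alpha$ between the $\varphi$-$W$-gauges attached to $(D,\phi,M)$ and $(D',\phi',M')$, put $g_0:=\alpha^{-\infty}:M^{-\infty}=M\to (M')^{-\infty}=M'$ and $g:=g_0\otimes_WB:D\to D'$, so $g(M)\subseteq M'$ automatically. Choosing $c\le\min(a,a')$ so that $M^c=M$ and $(M')^c=M'$, one has for $m\in M$ the identity $\phi(m)=p^c\varphi([m]_c)$, where $[m]_c\in M^{+\infty}$ is the class of $m\in M^c$ and $\varphi([m]_c)=\varphi_c(m)=p^{-c}\phi(m)\in M$, and likewise for $\phi'$. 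Since $\alpha$ is compatible with $v$ its degree-$c$ component equals $g_0$, so $\alpha^{+\infty}([m]_c)=[g_0(m)]_c$; combined with the $\varphi$-compatibility $\alpha^{-\infty}\circ\varphi=\varphi'\circ\alpha^{+\infty}$ (the $\varphi$'s being read as $\sigma$-semilinear maps) this gives
$$
g(\phi(m))=p^c\,g_0\bigl(\varphi([m]_c)\bigr)=p^c\,\varphi'\bigl(\alpha^{+\infty}([m]_c)\bigr)=p^c\,\varphi'\bigl([g_0(m)]_c\bigr)=\phi'(g(m))
$$
for all $m\in M$. As both $g\phi$ and $\phi'g$ are $\sigma$-semilinear and $M$ spans $D$, we get $g\phi=\phi'g$, so $g$ is a morphism of virtual crystals; and $g$ induces $\alpha$ because its degree-$r$ component is $g|_{M^r}=g_0|_{M^r}=\alpha^c|_{M^r}$, which equals $\alpha^r$ by compatibility of $\alpha$ with the inclusions $v:M^r\hookrightarrow M^{r-1}$. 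The same reconstruction shows $(D,\phi,M)$ is determined by its image, so the functor is an embedding.

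I expect the main obstacle to be exactly this reconstruction of $\phi$ and the verification that $g$ intertwines $\phi$ and $\phi'$: one must keep careful track of the identifications $M^{+\infty}\cong M^b$ and $M^{-\infty}\cong M$, of the fact that the transition maps defining $M^{+\infty}$ are multiplications by powers of $p$ while those defining $M^{-\infty}$ are inclusions, and of the $\sigma$-semilinearity built into the compatibility condition for morphisms of $\varphi$-modules; everything else is routine.
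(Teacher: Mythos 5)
Your argument is correct. The paper does not actually supply a proof of this theorem (it is asserted with only ``Moreover, one sees''), so there is nothing to compare against; your write-up fills that gap in the natural way, and the key computation --- reconstructing $g$ from $\alpha^{-\infty}$ and verifying $g\phi=\phi'g$ via the identity $\phi(m)=p^c\varphi([m]_c)$ on $M=M^c$, then using that two $\sigma$-semilinear maps agreeing on the $B$-spanning set $M$ agree on $D$ --- is exactly what is needed.
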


Now we want to characterize the essential image of this functor. If $(M,f,v)$ is a gauge,
then we let
$$
f_r: M^r \longrightarrow M^{+\infty} \quad\mbox{ and }\quad v_r: M^r \longrightarrow M^{-\infty}
$$
be the canonical morphisms into the respective inductive limits. We introduce the
following definitions, which will also be of use later.

\begin{definition}\label{def.rigid}
Let $\cA$ be an $\mathbb F_p$-linear abelian category.
A gauge $(M,f,v)$ in $\cA$ is called

\smallskip (a) {\it strict}, if the morphism
$$
(f_r,v_r): M^r \longrightarrow R^{+\infty}\oplus R^{-\infty}
$$
is a monomorphism for all $r\in\z$,

\smallskip (b) {\it quasi-rigid}, if the sequence
$$
M^r \mathop{\longrightarrow}\limits^{f} M^{r+1} \mathop{\longrightarrow}\limits^{v} M^r \mathop{\longrightarrow}\limits^{f} M^{r+1}
$$
is exact for all $r\in\z$,

\smallskip (c) {\it rigid}, if $M$ is strict and quasi-rigid.
\end{definition}

We note that, in general, the notion of (quasi-)rigidity makes sense only if the objects
are annihilated by $p$, since $vf = p = vf$.

\begin{lemma}\label{lem.quasi-rigid}
Let $M$ be a quasi-rigid gauge, and assume that one $M^s$ has finite length.
Then all $M^r$ have finite length and have the same length.
\end{lemma}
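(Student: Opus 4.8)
The plan is to use quasi-rigidity to realize, for each $r\in\z$, the object $M^{r+1}$ by a short exact sequence whose two outer terms are, up to isomorphism, a subobject and a quotient of $M^r$, and symmetrically $M^r$ by one built from $M^{r+1}$. Moving between adjacent indices then neither destroys finiteness of length nor changes the length, so the conclusion follows by propagating from the given index $s$ in both directions.

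First I would unwind the hypothesis. Quasi-rigidity of $M$ says that for every $r\in\z$ the four-term sequence $M^r \mathop{\longrightarrow}\limits^{f} M^{r+1} \mathop{\longrightarrow}\limits^{v} M^r \mathop{\longrightarrow}\limits^{f} M^{r+1}$ is exact, that is, exact at the first $M^{r+1}$ and at the second $M^r$. Writing $f_r\colon M^r\to M^{r+1}$ and $v_r\colon M^r\to M^{r-1}$ for the structure maps, this yields, for all $r$, the identities $\im(f_r)=\ker(v_{r+1})$ as subobjects of $M^{r+1}$ and $\ker(f_r)=\im(v_{r+1})$ as subobjects of $M^r$.

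Now fix $r$ and record two short exact sequences. The map $f_r$ gives $0\to\ker(f_r)\to M^r\to\im(f_r)\to 0$, exhibiting $\ker(f_r)$ as a subobject and $\im(f_r)$ as a quotient of $M^r$. The map $v_{r+1}$ gives $0\to\ker(v_{r+1})\to M^{r+1}\to\im(v_{r+1})\to 0$, which by the identities above reads $0\to\im(f_r)\to M^{r+1}\to\ker(f_r)\to 0$. Additivity of length along short exact sequences now does the rest: if $M^r$ has finite length, then its subobject $\ker(f_r)$ and its quotient $\im(f_r)$ have finite length, hence so does $M^{r+1}$, with $\ell(M^{r+1})=\ell(\im(f_r))+\ell(\ker(f_r))=\ell(M^r)$; conversely, if $M^{r+1}$ has finite length, then its subobject $\im(f_r)$ and its quotient $\ker(f_r)$ have finite length, hence so does $M^r$, with $\ell(M^r)=\ell(M^{r+1})$. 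Thus $M^r$ and $M^{r+1}$ have finite length simultaneously, and then the same length.

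Finally, starting from the index $s$ with $M^s$ of finite length, a two-sided induction on $r$ propagates both statements to every $r\in\z$, giving $\ell(M^r)=\ell(M^s)$ for all $r$. I do not expect a genuine obstacle: the only points needing care are extracting exactness at \emph{both} interior terms of the four-term sequence from the definition of quasi-rigidity, so that $\ker(f_r)$ and $\im(v_{r+1})$ genuinely coincide as subobjects of $M^r$, and keeping track of which of $\ker(f_r)$ and $\im(f_r)$ appears as a sub- and which as a quotient-object of each of $M^r$ and $M^{r+1}$.
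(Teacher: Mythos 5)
Your argument is correct and is essentially the paper's proof: you identify, via quasi-rigidity, the two short exact sequences built from $\ker(f_r)=\im(v_{r+1})$ and $\im(f_r)=\ker(v_{r+1})$, which are exactly the sequences $0 \to \im(v^{(r+1)}) \to M^r \to \im(f^{(r)}) \to 0$ and $0 \to \im(f^{(r)}) \to M^{r+1} \to \im(v^{(r+1)}) \to 0$ used in the paper, and then conclude by additivity of length and a two-sided induction from $s$. The bookkeeping about which term is a subobject and which a quotient is handled correctly.
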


\begin{proof}
For the morphisms $f^{(r)}: M^r \rightarrow M^{r+1}$ and $v^{(r+1)}: M^{r+1} \rightarrow M^r$ we have exact sequences
$$
0 \rightarrow \mbox{im}(v^{(r+1)}) \hookrightarrow M^r \mathop{\twoheadrightarrow}\limits^{f} \mbox{im}(f^{(r)})\rightarrow 0
\quad\mbox{ and }\quad
0 \rightarrow \mbox{im}(f^{(r)}) \hookrightarrow M^{r+1} \mathop{\twoheadrightarrow}\limits^{v} \mbox{im}(v^{(r+1)})\rightarrow 0\,.
$$
This implies that $M^r$ and $M^{r+1}$ have the same length, hence the claim.
\end{proof}

Now we consider gauges over a field $k$ of characteristic $p>0$. We have the following
Nakayama-type lemma:

\begin{lemma}\label{lem.nakayama}
Let $M$ be a gauge of finite type over $k$. If $M/(f,v)M = 0$, then $M=0$. As a consequence, if
$m_1,\ldots,m_r$ are homogeneous elements in $M$ whose residue classes generate $M/(f,v)$ as a $k$-vector space,
then these elements generate $M$ (as a $D(k)=k[f,v]/(fv)$-module).
\end{lemma}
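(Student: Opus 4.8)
The plan is to reduce the first assertion to the classical graded Nakayama lemma for the noetherian graded ring $D(k) = k[f,v]/(fv)$, using the finite-type hypothesis and the structure result of Lemma 0.1.1 (the first lemma of the excerpt). First I would observe that a finite-type gauge $M$ over $k$ is concentrated in a finite interval $[a,b]$, so $M$ is determined by the finite diagram $M^a \toto \cdots \toto M^b$ of finitely generated (hence finite-dimensional) $k$-vector spaces. The ideal $(f,v) \subset D(k)$ is the irrelevant-type graded ideal here, and $D(k)/(f,v) = k$ sits in degree $0$; thus $M/(f,v)M = \bigoplus_r \coker\big(M^{r-1} \xrightarrow{f} M^r \xleftarrow{v} M^{r+1}\big)$, the cokernel of $(f,v): M^{r-1} \oplus M^{r+1} \to M^r$ in each degree $r$.

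The core argument is then a degree-induction, working outward from the middle of the interval $[a,b]$. Suppose $M \neq 0$ but $M/(f,v)M = 0$. Since $M$ is concentrated in $[a,b]$ and nonzero, pick any degree $r$ with $M^r \neq 0$. The vanishing of $M/(f,v)M$ in degree $r$ says $M^r = f(M^{r-1}) + v(M^{r+1})$. Now I would exploit the boundary behavior: for $r \geq b$ the map $f: M^r \to M^{r+1}$ is an isomorphism and for $r \leq a$ the map $v: M^r \to M^{r-1}$ is an isomorphism, so $M^r \neq 0$ for one $r$ in $[a,b]$ forces $M^r \neq 0$ for \emph{all} $r \in \z$ (push the nonzero degree to the right with $f$ past $b$, or to the left with $v$ past $a$ — this uses $M \neq 0$ means some $M^r \neq 0$, and the iso's at the ends propagate it). In particular $M^{b} \neq 0$ and $M^{a} \neq 0$. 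But in degree $b$, the relation $M^b = f(M^{b-1}) + v(M^{b+1})$ combined with $v(M^{b+1}) = v(f(M^b)) = p M^b = 0$ (as we are over $k$, $p=0$) gives $M^b = f(M^{b-1})$; wait — that is not yet a contradiction, so instead I would run the induction the other way. The cleaner route: apply ordinary graded Nakayama. Since $D(k)$ is noetherian graded, $M$ is a finitely generated graded $D(k)$-module, and $(f,v)$ is contained in every graded maximal ideal relevant to the support along the "$f=v=0$" locus — more precisely, localize at the graded prime $(f,v)$ or simply note $M$ is supported in finitely many degrees so that $M/(f,v)M = 0$ with $M$ finitely generated over the $\n$- (after a shift) graded ring $D(k)$ forces $M = 0$ by the standard graded Nakayama lemma (Lemma 13.10 of \cite{GW} or its two-sided analogue, which applies verbatim once one shifts so the grading is bounded below, which it is since $M$ is concentrated in $[a,b]$ on the left after the identification $M^r = M^a$ for $r < a$ — rather, one truncates).

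I expect the main obstacle to be stating the graded Nakayama lemma in exactly the form needed, since $D(k)$ carries a $\z$-grading (not $\n$-grading) and $(f,v)$ is not the maximal graded ideal in the usual sense. I would circumvent this by the outward-propagation observation above, turned into a direct argument: let $m \in M$ be a nonzero homogeneous element of maximal degree among nonzero homogeneous elements — this exists because $M$ is concentrated in $[a,b]$, so degrees of nonzero homogeneous elements are bounded by $b$ (for $r > b$, $M^r \cong M^b$ via $f$, so "maximal degree" should be read within the finite diagram; equivalently work in $\bigoplus_{a \le r \le b} M^r$). Then the image of $m$ in $(M/(f,v)M)^{\deg m}$ must be zero, so $m \in f M^{\deg m - 1} + v M^{\deg m + 1}$. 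Writing this out and using that $fv = 0$ in $k[f,v]/(fv)$ and that any element of $v M^{\deg m +1}$ lies in the image of $v$, one performs a short descent on $b - \deg m$ (or directly on $\dim_k M^r$ summed over $r$) to conclude $M = 0$. Once $M = 0 \Leftrightarrow M/(f,v)M = 0$ is established, the second assertion is immediate: given homogeneous $m_1, \dots, m_r$ whose classes span $M/(f,v)M$, let $N = \sum_i D(k) m_i \subseteq M$; then $N$ is a finite-type subgauge, $M/N$ is finite-type, and $(M/N)/(f,v)(M/N) = 0$ since the $m_i$ already generate modulo $(f,v)$, whence $M/N = 0$ by the first part, i.e. the $m_i$ generate $M$.
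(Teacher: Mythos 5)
Your proposal does not arrive at a complete argument for the first assertion, and the two routes you sketch both contain genuine gaps.

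First, the reduction to the graded Nakayama lemma as stated in \cite{GW} does not go through as written. You claim that ``$M$ is supported in finitely many degrees'', but this is false: concentration in $[a,b]$ means that $f:M^r\to M^{r+1}$ is an isomorphism for $r\geq b$ and $v:M^r\to M^{r-1}$ is an isomorphism for $r\leq a$. Consequently, if $M\neq 0$ then $M^r\neq 0$ for \emph{all} $r\in\z$; the grading is neither bounded above nor below in the naive sense. The $\z$-grading versus $\n$-grading issue that you flag is therefore not a cosmetic one that can be fixed by ``shifting'' or ``truncating'' --- truncating changes the module, and the ideal $(f,v)$ is not the irrelevant ideal of an $\n$-graded ring in any straightforward sense (indeed $f$ has degree $+1$ and $v$ degree $-1$). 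Similarly, ``localize at the graded prime $(f,v)$'' is not a usable step here.

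Second, your fallback argument with a homogeneous element of maximal degree is also incomplete: since $M^r\cong M^b$ for $r\geq b$, there is no maximal degree, and restricting attention to $\bigoplus_{a\leq r\leq b}M^r$ does not by itself produce a contradiction. At degree $b$ the relation $M^b=fM^{b-1}+vM^{b+1}$ only shows $M^b=fM^{b-1}$ (after noting $vM^{b+1}=vf M^b=0$), which is consistent with $M^b\neq 0$. You then propose a ``short descent on $\dim_k M^r$'' without saying what decreases. This is the precise point at which the argument has to produce something new.

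The paper's proof avoids all of this by a single elementary iteration: take any $m\in M^s$ and write, using $M/(f,v)M=0$, the identity $m=fm^1_{s-1}+vm^2_{s+1}$; substituting the analogous expressions for $m^1_{s-1}$ and $m^2_{s+1}$ and repeatedly using $fv=vf=p=0$ to kill the cross terms yields $m=f^n m^1_{s-n}+v^n m^2_{s+n}$ for every $n>0$. Now the boundary behavior enters in the opposite way from what you were aiming at: because $v$ is an isomorphism on $M^r$ for $r\ll 0$ and $vf=0$, the map $f$ \emph{vanishes} on $M^r$ for $r\ll 0$, so $f^n m^1_{s-n}=0$ for $n\gg 0$; dually $v^n m^2_{s+n}=0$ for $n\gg 0$. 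Hence $m=0$. Your treatment of the second assertion (passing to $M/N$ where $N$ is the subgauge generated by the $m_i$) is correct and matches the paper, but it of course rests on the first part.
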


\begin{proof}
Assume that $M/(f,v)M=0$ and let $m \in M^s$ for some $s$. Then there exist elements
$m^1_{s-1}\in M^{s-1}$ and $m^2_{s+1}\in M^{s+1}$ with $m = fm^1_{s-1} + vm^2_{s+1}$.
By induction, and noting that $fv = vf = p = 0$, for each $n>0$ we get elements
$m^1_{s-n}\in M^{s-n}$ and $m^2_{s+n}\in M^{s+n}$ with
$$
m = f^nm^1_{s-n} + v^nm^2_{s+n}\,.
$$
We see that $m=0$, since $f=0$ on $M^{s-n}$ for $n>>0$ and $v=0$ on $M^{s+n}$ for $n>>0$,
because $v$ is an isomorphism on $M^r$ for $r<<0$ and $f$ is an isomorphism on $M^r$ for
$r>>0$, and $fv=vf=0$.
The second claim follows in a standard way, by looking at $M/N$, where $N$ is the
sub-$D(k)$-module generated by $m_1,\ldots,m_r$.
\end{proof}

We derive from this the following criterion.

\begin{lemma}\label{lem.free-mod-p}
Let $M$ be a $k$-gauge of finite type. Then the following are equivalent.

\smallskip (a) $M$ is free.

\smallskip (b) $M$ is rigid.

(c) The maps $M^r/v \mathop{\longrightarrow}\limits^{f}M^{r+1}/v$ and
$M^r/f \mathop{\longleftarrow}\limits^{v} M^{r+1}/f$ are injective for all $r$.
\end{lemma}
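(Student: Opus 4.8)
The plan is to prove $(a)\Rightarrow(b)\Rightarrow(c)\Rightarrow(a)$, using Lemma~\ref{lem.nakayama} for the last, crucial implication. Throughout I work with a finite-type $k$-gauge concentrated in a finite interval $[a,b]$, so all $M^r$ are finite-dimensional $k$-vector spaces with $fv=vf=0$.

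For $(a)\Rightarrow(b)$ it suffices, since being strict and being quasi-rigid are both stable under direct sums, to check that each $R(i)=k[f,v]/(fv)$ (suitably Tate-twisted) is rigid. Concretely $R(i)^r$ is $k$ for every $r$, with $f$ the identity for $r\geq -i$ and zero for $r<-i$, and dually for $v$; strictness and the exactness of $M^r\xrightarrow{f}M^{r+1}\xrightarrow{v}M^r\xrightarrow{f}M^{r+1}$ are then immediate case checks at $r$ versus $-i$. The implication $(b)\Rightarrow(c)$ is essentially formal: quasi-rigidity gives $\ker(f:M^r\to M^{r+1})=\im(v:M^{r+1}\to M^r)$ and dually $\ker(v)=\im(f)$, hence $f$ and $v$ induce mutually inverse isomorphisms $M^r/\im(v)\cong\im(f)$ and $M^{r+1}/\im(f)\cong\im(v)$; feeding these identifications into the maps $M^r/v\xrightarrow{f}M^{r+1}/v$ and $M^{r+1}/f\xrightarrow{v}M^r/f$ shows they are even isomorphisms, in particular injective. (Here I write $M^r/v$ for $M^r/\im(v^{(r+1)})$ and $M^r/f$ for $M^r/\im(f^{(r-1)})$, matching the statement.)

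The heart of the argument is $(c)\Rightarrow(a)$. Consider $\bar M := M/(f,v)M$, a finite-dimensional graded $k$-vector space, and choose homogeneous elements $m_1,\dots,m_n$ lifting a homogeneous basis, with $m_i$ of degree $d_i$. By Lemma~\ref{lem.nakayama} these generate $M$ as a $D(k)$-module, so they define a surjection $\pi:P:=\bigoplus_i R(-d_i)\twoheadrightarrow M$ of $k$-gauges, and by construction $\pi$ is an isomorphism modulo $(f,v)$. I claim $\pi$ is an isomorphism, equivalently that $K:=\ker\pi=0$; note $K$ is again a finite-type $k$-gauge, concentrated in a finite interval, and $K/(f,v)K=0$ would finish by Lemma~\ref{lem.nakayama}. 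The condition $(c)$ for $M$ is what forces this: in each degree $r$, the map $\bar M^{r-1}\xrightarrow{f}\bar M^r$ is injective, where $\bar M^r=M^r/(fM^{r-1}+vM^{r+1})$; but I claim the hypothesis that $M^r/v\xrightarrow{f}M^{r+1}/v$ is injective lets one lift this, via a snake-lemma/dimension count on the short exact sequence $0\to K\to P\to M\to 0$, to the statement that $\dim_k P^r-\dim_k M^r$ is independent of $r$ and equals the common value at $r\ll 0$ and $r\gg 0$, where it is $0$ because $\pi$ is an isomorphism outside $[a,b]$ (both sides being eventually the direct limits, and $\bar\pi$ an iso). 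Comparing with the fact that $P$ is free, hence rigid, hence by the already-proved $(a)\Rightarrow(b)\Rightarrow(c)$ satisfies $(c)$ with equalities of dimensions, one gets $\dim_k K^r=0$ for all $r$.

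The main obstacle is exactly this last dimension bookkeeping: one must show that the two injectivity conditions in $(c)$, together with the known behaviour of $f$ and $v$ near $\pm\infty$, pin down all the dimensions $\dim_k M^r$ so that they match those of the free cover $P$. I would organize it by setting $\delta(r):=\dim_k M^r$ and showing $(c)$ implies $\delta(r+1)-\delta(r)=\dim_k(\bar M^{r+1})-(\text{something symmetric})$, i.e. that the ``jumps'' of $\delta$ are controlled entirely by $\bar M$, with the same formula for $P$; since $P$ and $M$ have the same $\bar{\ }$ and agree at the ends, they agree everywhere, so $K=0$. An alternative, perhaps cleaner route for this step is to argue directly that $(c)$ makes $f:\ker(v|M^r)\to\ker(v|M^{r+1})$ injective and, combined with $fv=0$ and concentration in $[a,b]$, forces $M$ to be rigid, then invoke a classification of rigid finite-type $k$-gauges as direct sums of $R(i)$'s; but the snake-lemma comparison with a free cover is more robust and is the version I would write up.
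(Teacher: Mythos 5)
Your overall route $(a)\Rightarrow(b)\Rightarrow(c)\Rightarrow(a)$ and the use of Lemma~\ref{lem.nakayama} to build a free cover $P\twoheadrightarrow M$ match the paper's plan. But there is a real gap in $(c)\Rightarrow(a)$, which you yourself flag as ``the main obstacle,'' and which you do not actually close. You need a concrete reason why the surjection $\pi\colon P\to M$ is injective, and ``snake lemma / dimension count'' is not a proof: in particular, $\pi$ is \emph{not} automatically an isomorphism for $r\gg 0$ or $r\ll 0$ --- that is precisely the kind of thing that requires (c). The missing ingredient, which the paper supplies, is that (c) already implies $M$ is quasi-rigid (the factorization $M^r/v\xrightarrow{f}M^{r+1}\twoheadrightarrow M^{r+1}/v$ shows $\ker f=\im v$, and dually), so by Lemma~\ref{lem.quasi-rigid} \emph{all} $M^r$ have the same finite dimension. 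Combined with the chain of injections $M^a/v\hookrightarrow M^{a+1}/v\hookrightarrow\cdots\hookrightarrow M^b/v=M^b$ furnished by (c), this pins the common dimension to $d=\sum_{i=a}^b d_i$ where $d_i=\dim_k M^i/(f,v)$; since the free cover $P=\bigoplus_{i=a}^b k(-i)^{d_i}$ also has every component of dimension $d$, the degreewise surjection $P^r\to M^r$ is forced to be an isomorphism. Without invoking quasi-rigidity (or equivalently Lemma~\ref{lem.quasi-rigid}) the bookkeeping does not terminate, and your alternative suggestion (``invoke a classification of rigid finite-type $k$-gauges as sums of $R(i)$'') is circular, since that classification is exactly the lemma being proved.

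A smaller point: in $(b)\Rightarrow(c)$ you assert that the maps $M^r/v\to M^{r+1}/v$ and $M^{r+1}/f\to M^r/f$ ``are even isomorphisms.'' This is false already for $M=k(0)$: at $r=-1$ the map $M^{-1}/v\to M^0/v$ is $0\to k$. The identifications $M^r/\im v\cong\im f$ (from quasi-rigidity and the first isomorphism theorem) only reduce the claim to showing $\im(f^{(r)})\cap\im(v^{(r+2)})=0$, and that last step genuinely requires strictness (via $\im f^{(r)}=\ker v^{(r+1)}$, $\im v^{(r+2)}=\ker f^{(r+1)}$, and $\ker v\cap\ker f=0$). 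The conclusion (injectivity) is correct, but as written the step conflates the two halves of rigidity and overclaims.
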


Here we have used the short notation $M^r/v$ for $(M/(v))^r$ or, explicitly, $M^r/vM^{r+1}$;
similarly for $M^r/f$.

\begin{proof}
Obviously, (b) holds for the free gauge $k = k(0)$ (see the definition in section \ref{ssec-1.3}),
i.e., the module $M=D=k[f,v]/(fv)$, where $M/v$ is a free $k[f]$-module and $M/f$ is a free $k[v]$-module.
In fact, this gauge corresponds to the diagram
$$
\ldots \mathop{\toto}\limits^{0}_{id} k \mathop{\toto}\limits^{0}_{id} k \mathop{\toto}\limits^{id}_{0} k \mathop{\toto}\limits^{id}_{0} \ldots \,,
$$
where the middle $k$ is placed in degree 0. One immediately sees strictness ($\ker(v)\cap\ker(f)=0$)
and quasi-rigidity ($\ker(f)=\im(v)$ and $\ker(v)=\im(f)$) at all places.
Thus (b) holds for gauges $k(i)$ by degree shifting, and for arbitrary free gauges by taking sums.

On the other hand, (b) implies (c). In fact, for the injectivity of $M^r/v \mathop{\rightarrow}\limits^{f} M^{r+1}/v$
assume that $f(x) = v(y) =: a$ for $x\in M^r$ and $y\in M^{r+2}$. By rigidity we have $0 = \ker(v)\cap\ker(f) = \im(f)\cap\im(v)$,
so that $a=0$. Since $\ker(f) = \im(v)$, $f(x)=0$ implies $x \in \im(v)$ as claimed. The injectivity of
$M^{r+1}/f \mathop{\rightarrow}\limits^{v} M^r/f$ follows dually: If $v(y) = f(x)$, then $y\in \ker(v) = \im(f)$.
We also note that (c) immediately implies that $M$ is quasi-rigid: We have a factorization
$M^r/v \mathop{\rightarrow}\limits^{f} M^{r+1} \twoheadrightarrow M^{r+1}/v$, so that (c) implies $\ker(f)=\im(v)$.
Similarly, (c) implies $\ker(v) = \im(f)$.

Finally we show that (c) implies (a).
We may assume that $M$ is concentrated in the finite interval $[a,b]$, and then (c) gives a
sequence of injections of finite-dimensional $k$-vector spaces
$$
\ldots \rightarrow 0 \rightarrow M^a/v \hookrightarrow M^{a+1}/v \hookrightarrow \ldots \hookrightarrow M^b/v = M^b \mathop{\rightarrow}\limits^{f}_{\sim} M^{b+1} \mathop{\rightarrow}\limits^{f}_{\sim} \ldots
$$
Note that $M^{r-1} \mathop{\leftarrow}\limits^{v} M^{r}$ is an isomorphism for $r\leq a$, and that
$M^r \mathop{\leftarrow}\limits^{v} M^{r+1}$ is zero for $r\geq b$, because the map $f$ in the other
direction is bijective, and $fv=p=0$.
Now take a $k$ basis $m^a_1,\ldots m^a_{d_a}$ of $M^a/v$, a $k$-basis $m^{a+1}_1,\ldots,m^{a+1}_{d_{a+1}}$
of $(M^{a+1}/v)/f(M^a/v)=M^{a+1}/(v,f)$ etc. up to a $k$-basis $m^b_1,\ldots $ of $M^b/(f,b)$, and lift these elements
to elements $\hat{m}^a_1,\ldots \hat{m}^a_{d_a}, \hat{m}^{a+1}_1,\ldots$ in $M^a$, $M^{a+1}$ etc.
Then the universal property of the free gauges $k(i)$ (see section \ref{ssec-1.3}) gives a morphism
of $k$-gauges
$$
g: F = \bigoplus_{i = a}^{b} k(-i)^{d_i} \longrightarrow M
$$
mapping the canonical elements of $k(-i)^i$ to the elements $\hat{m}^{i}_1,\ldots,\hat{m}^{i}_{d_i}$.
Since this map is surjective modulo $(f,v)$, it is surjective by lemma \ref{lem.nakayama}.
Moreover, by construction the map $F^b \rightarrow M^b/v=M^b$ is bijective: both spaces have
dimension $d = d_a + \ldots + d_b$. But, as remarked above, (c) implies that $M$ is quasi-rigid,
so that each $M^r$ has dimension $d$, and the same is true for each $F^r$. Therefore the surjective
map $g$ is an isomorphism, and we have shown (a).
\end{proof}

We draw the following consequences for $W(k)$-gauges for a perfect field $k$.

\begin{corollary}\label{cor.nakayama}
Let $k$ be a perfect field of characteristic $p>0$, and let $M$ be gauge of finite type
over $W=W(k)$. If $M/(p,f,v)M=0$, then $M=0$. Consequently, $M$ is generated by homogeneous
elements $m_1,\ldots,m_r$ if and only if their residue classes generate $M/(p,f,v)M$.
\end{corollary}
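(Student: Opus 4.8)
The plan is to deduce the corollary from the field case, Lemma~\ref{lem.nakayama}, by reducing modulo $p$ and then invoking the classical Nakayama lemma over the local ring $W$.

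First I would pass to $M/pM$. Since $M$ is of finite type over $D_\infty = W[f,v]/(fv-p)$, the quotient $M/pM$ is of finite type over $D_\infty/pD_\infty = k[f,v]/(fv) = D(k)$, so it is a $k$-gauge of finite type in the sense of Lemma~\ref{lem.nakayama}. The hypothesis $M/(p,f,v)M = 0$ says exactly that $(M/pM)/(f,v)(M/pM) = 0$. Hence Lemma~\ref{lem.nakayama}, applied to the $k$-gauge $M/pM$, yields $M/pM = 0$, i.e. $pM = M$.

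Next I would upgrade this to $M = 0$. Because $M$ is of finite type over $D_\infty$ and $W$ is noetherian, the Lemma in Section~\ref{ssec-1.1} shows that $M$ is concentrated in some finite interval $[a,b]$ and that each component $M^r$ is a finitely generated $W$-module. The equality $pM = M$ holds in each degree, so $pM^r = M^r$ for all $r$. Since $W$ is local with maximal ideal $pW$ and $M^r$ is a finitely generated $W$-module, the classical Nakayama lemma gives $M^r = 0$ for every $r$, whence $M = 0$. For the "consequently" part, let $N \subseteq M$ be the graded sub-$D_\infty$-module generated by $m_1,\ldots,m_r$; then $M/N$ is again of finite type over $D_\infty$. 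As $D_\infty/(p,f,v)D_\infty = k$, the residue classes of the $m_i$ generate $M/(p,f,v)M$ (as a $k$-vector space) if and only if $M = N + (p,f,v)M$, that is, if and only if $(M/N)/(p,f,v)(M/N) = 0$; by the first part this is equivalent to $M/N = 0$, i.e. to $M = N$, i.e. to the $m_i$ generating $M$.

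The only genuinely delicate point is that $D_\infty$ is \emph{not} local and $p$ does not lie in its Jacobson radical, so one cannot apply a Nakayama argument to $D_\infty$ directly; the device is to first use the structure statement (finite-type gauges over the noetherian ring $W$ are concentrated in a finite interval with finitely generated components) to replace the $D_\infty$-module $M$ by the finitely many finitely generated $W$-modules $M^a,\ldots,M^b$, where ordinary Nakayama over $W$ does apply. Everything else is a routine transfer between a gauge and its components and quotients.
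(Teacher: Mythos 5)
Your proposal is correct and follows essentially the same route as the paper's terse proof: reduce modulo $p$ and apply Lemma~\ref{lem.nakayama} to the $k$-gauge $M/pM$ to deduce $pM = M$, then apply the ordinary Nakayama lemma componentwise over the local ring $W$ (using that each $M^r$ is a finitely generated $W$-module by Lemma~1.1.1) to conclude $M = 0$. Your spelling out of the ``consequently'' reduction and the remark about why one must descend to the components rather than argue directly over the non-local ring $D_\infty$ are exactly the intended reasoning.
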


\begin{proof}
This follows from lemma \ref{lem.nakayama} by the usual Nakayama lemma for the local ring $W$,
because the components $M^s$ of $M$ are finitely generated $W$-modules.
\end{proof}

Next we characterize free $W$-gauges of finite type. Obviously, their components free $W$-modules.
But this condition does not suffice.

\begin{theorem}\label{thm.virtual.crystals.2}
Let $M$ be a $W$-gauge of finite type with free components. Then the following conditions
are equivalent.

\smallskip (a) $M$ is a free $W$-gauge.

\smallskip (b) $N=M/pM$ is a free $k$-gauge.

\smallskip (c) The map $M^r/v \mathop{\longrightarrow}\limits^{f} M^{r+1}/v$ is injective for all $r$.

\smallskip (d) The map $M^{r+1}/f \mathop{\longrightarrow}\limits^{v} M^r/f$ is injective for all $r$.

\end{theorem}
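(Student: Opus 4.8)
The plan is to show the cycle of implications $(a) \Rightarrow (b) \Rightarrow (c) \Leftrightarrow (d) \Rightarrow (a)$, reducing everything to the already-established characterization of free $k$-gauges in Lemma~\ref{lem.free-mod-p}. The implication $(a) \Rightarrow (b)$ is immediate: reduction mod $p$ is a right-exact tensor functor taking $W(i)$ to $k(i)$, so it takes free $W$-gauges to free $k$-gauges. For $(b) \Rightarrow (c)$ and $(b) \Rightarrow (d)$, note that by Lemma~\ref{lem.free-mod-p} the $k$-gauge $N = M/pM$ is rigid, hence satisfies condition (c) of that lemma, i.e., the maps $N^r/v \to N^{r+1}/v$ and $N^{r+1}/f \to N^r/f$ are injective. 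The point is then that $N^r/vN^{r+1} = (M/pM)^r/v(M/pM)^{r+1}$ is naturally identified with $M^r/(v M^{r+1} + p M^r)$, and similarly $N^r/f = M^r/(fM^{r-1} + pM^r)$; so injectivity of $N^r/v \to N^{r+1}/v$ says exactly that for $x \in M^r$, if $f(x) \in vM^{r+2} + pM^{r+1}$ then $x \in vM^{r+1} + pM^r$. I would like to upgrade this to the statement without the $p$, i.e., to condition (c) of the theorem, and this is where a small argument is needed rather than a formal manipulation.

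The equivalence $(c) \Leftrightarrow (d)$ should be handled by the "$p$-divisibility of Frobenius" duality, or more directly by a symmetry argument: the two conditions are genuinely dual under the operation sending the gauge $M = (\ldots \to M^{r-1} \rightleftarrows M^r \rightleftarrows \ldots)$ to its reflection interchanging the roles of $f$ and $v$ (and reindexing $r \mapsto -r$). Under this reflection condition (c) for $M$ becomes condition (d) for the reflected gauge and vice versa, and "free $W$-gauge of finite type with free components" is visibly a self-dual condition; but since we are trying to prove (c) and (d) are each equivalent to (a), it is cleanest to just prove $(c) \Leftrightarrow (d)$ on its own and then do $(c)\& (d) \Rightarrow (a)$, or even to prove $(c) \Rightarrow (a)$ directly (which by the dual argument also gives $(d) \Rightarrow (a)$).

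For the main implication $(c) \Rightarrow (a)$ — or $(c) \& (d) \Rightarrow (a)$ — I would mimic the proof of $(c) \Rightarrow (a)$ in Lemma~\ref{lem.free-mod-p}. First reduce mod $p$: one checks that condition (c) for $M$ implies condition (c) of Lemma~\ref{lem.free-mod-p} for $N = M/pM$ — the injectivity of $M^r/v \to M^{r+1}/v$ forces injectivity of $N^r/v \to N^{r+1}/v$ after killing $p$, using that these are the reductions — and dually if we assume (d). Hence $N$ is a free $k$-gauge by Lemma~\ref{lem.free-mod-p}. Now pick, exactly as in that proof, homogeneous elements $\hat m^i_j \in M^i$ lifting a $k$-basis of $N^i/(f,v) = M^i/(f,v,p)$, assemble them into a map $g: F = \bigoplus_{i=a}^b W(-i)^{d_i} \to M$ of $W$-gauges of finite type; by Corollary~\ref{cor.nakayama} $g$ is surjective since it is surjective mod $(p,f,v)$. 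It remains to prove injectivity of $g$. Here is the step I expect to be the main obstacle: unlike the $k$-gauge case, "all components have the same dimension" is not available, so I cannot conclude by a dimension count on a single $M^r$. Instead I would argue that $g$ is an isomorphism after reducing mod $p$ (because $\overline g: \overline F \to N$ is a surjection of free $k$-gauges which is an isomorphism on, say, the stable component $M^b$, hence an isomorphism by the rigidity/constant-rank argument of Lemma~\ref{lem.free-mod-p}), and then lift: $\ker(g)$ is a graded $D_\infty(W)$-submodule of the free module $F$, with each component a finitely generated $W$-module; since $g \bmod p$ is injective we get $\ker(g) \subseteq pF$, and since $F$ is $p$-torsion-free one deduces $\ker(g) = p\ker(g)$, so $\ker(g) = 0$ by Corollary~\ref{cor.nakayama} applied to $\ker(g)$ (or just componentwise Nakayama over $W$). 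This finishes $(c) \Rightarrow (a)$; the dual runs $(d) \Rightarrow (a)$, and together with the trivial $(a) \Rightarrow (b) \Rightarrow (c), (d)$ we get all four equivalences. The one genuine subtlety to get right is the interaction of the quotients $M^r/v$ with reduction mod $p$ — specifically that condition (c) for $M$ is not a priori the same as, but does imply, condition (c) of Lemma~\ref{lem.free-mod-p} for $M/pM$ — and I would spell that bit out carefully.
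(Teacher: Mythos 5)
Your plan has the right shape but carries a gap that the paper dissolves with one short observation, and that observation also makes most of your extra machinery unnecessary. The point you are missing is that since $fv = vf = p$, for any $m\in M^r$ one has $pm = v(fm)\in vM^{r+1}$ and $pm = f(vm)\in fM^{r-1}$. Hence $pM^r$ is already contained in $vM^{r+1}$ (and in $fM^{r-1}$), so the quotient $M^r/v = M^r/vM^{r+1}$ is automatically killed by $p$ and coincides \emph{on the nose} with $N^r/v$ for $N=M/pM$; similarly $M^r/f = N^r/f$. The ``genuine subtlety'' you flag at the end --- that condition (c) for $M$ is not a priori the same as condition (c) of Lemma~\ref{lem.free-mod-p} for $N$ --- does not exist: they are literally the same condition, and there is no ``small argument'' or ``upgrade'' to be made. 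This is exactly what the paper observes, so that $(b)\Leftrightarrow(c)\wedge(d)$ is an immediate consequence of Lemma~\ref{lem.free-mod-p} with nothing further to check.

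Because you do not see this, your justification of $(b)\Rightarrow(c)$ is unsound as written: ``the injectivity of $M^r/v\to M^{r+1}/v$ forces injectivity of $N^r/v\to N^{r+1}/v$ after killing $p$'' is not a valid inference in general, since reduction mod $p$ does not preserve injectivity (one needs the cokernel to be $p$-torsion-free, which is precisely what condition (c) is trying to establish). It happens to be vacuously true here only because the maps are already their own reductions. Your compensating strategy --- rerunning the whole lifting construction of Lemma~\ref{lem.free-mod-p} inside $(c)\Rightarrow(a)$, and handling $(c)\Leftrightarrow(d)$ by a reflection/duality heuristic that you then say you would not actually use --- is avoidable. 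Once $(b)\Leftrightarrow(c)\wedge(d)$ is free, the only implication needing work beyond the lifting step $(b)\Rightarrow(a)$ (which you state correctly and which is the paper's argument) is $(c)\Leftrightarrow(d)$, and the paper disposes of it in two lines using that $v$ is injective on $M$ (because $fv = p$ and each $M^r$ is a free, hence $p$-torsion-free, $W$-module): if $v(y)=f(x)$ with $x\in M^{r-1}$, then $f(x)\in vM^{r+1}$, so (c) gives $x=v(z)$; then $v(y)=fv(z)=vf(z)$, and injectivity of $v$ yields $y=f(z)$. You should internalize the identity $M^r/v = N^r/v$ rather than treat it as an implication to be proved --- it is where the torsion-free hypothesis on the components of $M$ actually earns its keep.
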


\begin{proof} (a) trivially implies (b), but (b) also implies (a):
Assume $N = M/pM$ is free, say isomorphic to $\oplus_i k(i)^{d_i}$.
By the universal property of free gauges (= free modules over $D(k)$ and $D(W)$, respectively) we
can lift the isomorphism modulo $p$ to a morphism $F = \oplus_i W(i)^{d_i} \rightarrow M$, which is
surjective by corollary \ref{cor.nakayama}. Since this map is an isomorphism modulo $p$, and all
components are free $W$-modules, it is an isomorphism.

Next we remark that the maps in (c) and (d) can be identified with the maps
$$
N^r/v \mathop{\longrightarrow}\limits^{f} N^{r+1}/v \qaq N^{r+1}/f \mathop{\longrightarrow}\limits^{v} N^r/f\,,
$$
respectively, because $pM$ is contained in both $fM$ and $vM$, by the equality $fv=vf=p$.

Therefore (b) is equivalent to the conjunction of (c) and (d), by theorem \ref{thm.virtual.crystals.1}.

But (c) and (d) are in fact equivalent in our situation: Assume (c).
To show the injectivity in (d) let $y\in M^{r+1}$ with $v(y) = f(x)$,
where $x \in M^{r-1}$. Then (c) implies $x = v(z)$ with $z\in M^r$. We get $v(y) = f(v(z)) = v(f(z)$,
and hence that $y = f(z)$, because $v$ is injective ($fv = p$, and $M$ is torsion-free as $W$-module).
A similar reasoning shows that also (d) implies (c), again since $M$ is a torsion-free $W$-module.
Therefore properties (a) to (d) are equivalent.
\end{proof}

\begin{corollary}\label{cor.virtual.crystals}
A $W$-gauge of finite type $M$ is free if and only if it comes from a virtual crystal over $k$,
i.e., is the underlying gauge of a $\varphi$-$W$-gauge in the essential image of the functor in
theorem \ref{thm.virtual.crystals.1}.
\end{corollary}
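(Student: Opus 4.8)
The plan is to prove the two implications of the equivalence separately, invoking Theorem~\ref{thm.virtual.crystals.2} in one direction and constructing an explicit virtual crystal in the other. Throughout I use that $W=W(k)$ is a discrete valuation ring (with uniformizer $p$) since $k$ is a perfect field of characteristic $p$.

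For the implication \emph{comes from a virtual crystal $\Rightarrow$ free}, let $M$ be the underlying gauge of the $\varphi$-$W$-gauge attached to a virtual crystal $(D,\phi,M_0)$. By Theorem~\ref{thm.virtual.crystals.1} (and the explicit description preceding it) $M$ is of finite type with free components, concentrated in some interval $[a,b]$, with $M^r\subseteq M_0$, with $f$ equal to multiplication by $p$ and $v$ equal to the inclusion $M^{r+1}\hookrightarrow M^r$. Thus Theorem~\ref{thm.virtual.crystals.2} applies, and it remains to verify its condition (c), which here is the injectivity of the map $M^r/M^{r+1}\mathop{\longrightarrow}\limits^{p}M^{r+1}/M^{r+2}$ induced by $f$ (note $vM^{r+1}=M^{r+1}$ since $v$ is the inclusion, so $M^r/v=M^r/M^{r+1}$; well-definedness of this map, i.e.\ $pM^{r+1}\subseteq M^{r+2}$, follows from $\phi(px)=p\,\phi(x)\in p^{r+2}M_0$ for $x\in M^{r+1}$). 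If $x\in M^r$ satisfies $px\in M^{r+2}$, then $p\,\phi(x)=\phi(px)\in p^{r+2}M_0$, hence $\phi(x)\in p^{r+1}M_0$ because $M_0$ is torsion free, i.e.\ $x\in M^{r+1}$; so the map is injective and $M$ is a free $W$-gauge.

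For the converse, let $M$ be a free $W$-gauge of finite type. By the definition of freeness (section~\ref{ssec-1.3}) together with finite generation over $D(W)=W[f,v]/(fv-p)$ --- which via Corollary~\ref{cor.nakayama} forces $M/(p,f,v)M$ to be finite dimensional over $k$ --- one can write $M\cong\bigoplus_{\alpha}W(i_\alpha)$ as a \emph{finite} direct sum of Tate twists of the rank-one free gauge $W=W(0)=D(W)$. The construction of Theorem~\ref{thm.virtual.crystals.1} is additive: for virtual crystals $(D_1,\phi_1,M_1)$ and $(D_2,\phi_2,M_2)$ the triple $(D_1\oplus D_2,\phi_1\oplus\phi_2,M_1\oplus M_2)$ is again a virtual crystal, and since its defining filtration, the maps $f$, $v$ and the morphism $\varphi$ are all formed componentwise, the associated $\varphi$-$W$-gauge is the direct sum of the two associated ones. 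Hence it suffices to realize each rank-one free gauge $W(i)$. For this I would take $D=B$, the lattice $M_0=W$, and $\phi\colon B\to B$ the $\sigma$-semi-linear automorphism $\phi(x)=p^{-i}\sigma(x)$; using that $\sigma$ preserves the $p$-adic valuation on $W$ one computes $M^r=\{\,m\in W\mid \phi(m)\in p^rW\,\}=p^{\max(r+i,0)}W$, with $f$ multiplication by $p$ and $v$ the inclusion, which by the first part is a free gauge, of rank one and concentrated in degree $-i$, hence isomorphic to $W(i)$. Taking the direct sum over $\alpha$ of the virtual crystals realizing the $W(i_\alpha)$ then exhibits $M$, up to isomorphism, in the essential image of the functor of Theorem~\ref{thm.virtual.crystals.1}.

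No serious obstacle is expected, as the corollary is essentially a synthesis of Theorems~\ref{thm.virtual.crystals.1} and~\ref{thm.virtual.crystals.2}. The two points deserving a little care are the finiteness bookkeeping in the converse direction --- that ``free'' plus ``finite type'' genuinely yields a finite direct-sum decomposition into twists $W(i)$ with a bounded set of indices --- and the verification that the Tate twist $W(i)$, with the conventions fixed in section~\ref{ssec-1.3}, really does coincide with the gauge produced by the explicit virtual crystal $(B,p^{-i}\sigma,W)$; both become routine once the conventions are unwound.
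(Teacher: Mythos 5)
Your proof is correct and follows essentially the same route as the paper: the forward implication is verified by checking condition (c) of Theorem~\ref{thm.virtual.crystals.2} via the torsion-freeness of the lattice, and the converse reduces by additivity to realizing each $W(i)$ from the virtual crystal $(B,p^{-i}\sigma,W)$. You merely add a bit of explicit bookkeeping (the computation $M^r=p^{\max(r+i,0)}W$ and the finiteness of the decomposition via Corollary~\ref{cor.nakayama}) that the paper leaves implicit.
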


\begin{proof}
Assume that $M$ comes from the virtual $k$-crystal
$(D,\phi,L)$. Then $M$ has free components, and we show condition (c) in theorem \ref{thm.virtual.crystals.2}.
We have
$$
M^r = \{\,x\in L\,\mid\,\phi(x) \in p^rL\,\}\,,\quad f(x) = px\,, \quad\mbox{ and }\quad v(x) = x\,,
$$
by definition. Now let $x\in M^r$ with $f(x) = v(y)$ for $y\in M^{r+2}$. Then, by definition, we have
$x, y \in L$ satisfying $\phi(x) = p^rz$ with $z \in L$ and $\phi(y) = p^{r+2}t$ with $t\in L$,
and $px = y$. Then $p^{r+1}z = p\phi(x) = \phi(px) = \phi(y) = p^{r+2}t$ and hence $z=pt$,
since $L$ is torsion-free. This implies $\phi(x) = p^{r+1}t$, i.e., $x \in vM^{r+2}$.
Conversely we show that any free $W$-gauge arises from a virtual crystal.
By considering sums in both categories, we may consider the
case $M = W(i)$ for some $i\in \z$. But this gauge arises from the virtual crystal $(B,\phi,W)$
where $\phi(b) = p^{-i}\sigma(b)$.
\end{proof}

We can now strengthen this result and characterize the image of the functor in theorem \ref{thm.virtual.crystals.1}.

\begin{theorem}\label{thm.virtual.crystals.3}
A $\varphi$-$W$-gauge $(M,f,v,\varphi)$ of finite type comes from a virtual crystal over $k$, i.e., lies in the essential
image of the functor in theorem \ref{thm.virtual.crystals.1}, if and only if the underlying gauge is a free $W$-gauge.
\end{theorem}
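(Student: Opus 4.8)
The implication \emph{"comes from a virtual crystal $\Rightarrow$ underlying gauge free"} is immediate from Corollary~\ref{cor.virtual.crystals}: by hypothesis the underlying $W$-gauge of $(M,f,v,\varphi)$ is the underlying gauge of a $\varphi$-$W$-gauge in the essential image of the functor of Theorem~\ref{thm.virtual.crystals.1}, and Corollary~\ref{cor.virtual.crystals} says precisely that such a gauge is free. So the real content is the converse, and the point to be careful about is that Corollary~\ref{cor.virtual.crystals} only yields \emph{some} virtual crystal whose associated gauge is isomorphic to $(M,f,v)$; the Frobenius produced by that crystal need not be the given $\varphi$. The plan is therefore to first reduce, then to correct the crystal's Frobenius by a lattice automorphism. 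Concretely: given that $(M,f,v)$ is free, pick by Corollary~\ref{cor.virtual.crystals} a virtual crystal $(D,\phi,L)$ over $k$ and an isomorphism of $W$-gauges $\beta$ from $(M,f,v)$ onto the underlying gauge of the standard construction of \S\ref{ssec-2.2} applied to $(D,\phi,L)$. Transporting $\varphi$ along $\beta$, I may replace $(M,f,v,\varphi)$ by $(M_0,f_0,v_0,\varphi)$, where now $(M_0,f_0,v_0)$ is \emph{literally} the gauge of $(D,\phi,L)$ --- so $M_0^{-\infty}=M_0^a=L$, $M_0^{+\infty}=M_0^b$, $M_0^r=\{x\in L\mid\phi(x)\in p^rL\}$, with $f_0=$ multiplication by $p$ and $v_0=$ inclusion --- while $\varphi\colon M_0^{+\infty}\to M_0^{-\infty}$ is an arbitrary $\sigma$-semilinear isomorphism, a priori different from the canonical Frobenius $\varphi_0$ of the standard construction, which satisfies $\varphi_0(x)=p^{-b}\phi(x)$ for $x\in M_0^b$.

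Next comes the construction. Since both $\varphi$ and $\varphi_0$ are $\sigma$-semilinear isomorphisms $M_0^b\to L$, the composite $w:=\varphi\circ\varphi_0^{-1}$ is a $W$-\emph{linear} automorphism of $L$ (the two semilinearities cancel). View $w$ as a $B$-linear automorphism of $D=L\otimes_WB$ and set $\phi':=w\circ\phi\colon D\to D$; this is again a $\sigma$-semilinear automorphism, so $(D,\phi',L)$ is a virtual crystal over $k$.

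Then comes the verification, which is where one really uses that $w$ stabilizes the lattice $L$ and not merely $D$. Because $w(p^rL)=p^rL$ for every $r$, one gets
$$\{x\in L\mid\phi'(x)\in p^rL\}=\{x\in L\mid\phi(x)\in p^rL\}=M_0^r\qquad\text{for all }r,$$
and the corresponding $f',v'$ (multiplication by $p$, resp. inclusion) coincide with $f_0,v_0$; hence the underlying $W$-gauge of the standard construction on $(D,\phi',L)$ is exactly $(M_0,f_0,v_0)$. Finally, its canonical Frobenius $\varphi'$ satisfies, for $x\in M_0^b=M_0^{+\infty}$,
$$\varphi'(x)=p^{-b}\phi'(x)=w\bigl(p^{-b}\phi(x)\bigr)=w(\varphi_0(x))=\varphi(x),$$
so $\varphi'=\varphi$. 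Thus $(M_0,f_0,v_0,\varphi)$ is the $\varphi$-$W$-gauge attached to $(D,\phi',L)$, hence lies in the essential image of the functor of Theorem~\ref{thm.virtual.crystals.1}; undoing the reduction along $\beta$ shows the same for $(M,f,v,\varphi)$.

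The one step that requires an idea rather than bookkeeping is recognizing that surjectivity (up to isomorphism) of the functor of Theorem~\ref{thm.virtual.crystals.1} onto $\varphi$-$W$-gauges with free underlying gauge is \emph{not} formally inherited from Corollary~\ref{cor.virtual.crystals}, and that the fix is to twist $\phi$ by the lattice automorphism $w=\varphi\varphi_0^{-1}$: the compatibility $\varphi=w\varphi_0$ forces this choice, and the fact that $w$ lands in $\mathrm{Aut}_W(L)$ --- automatic here because both $\varphi$ and $\varphi_0$ identify $M_0^b$ with $L$ as $W$-modules --- is exactly what leaves all the pieces $M_0^r$, hence the whole gauge, unchanged. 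Everything else is unwinding the definitions of the standard construction recalled in \S\ref{ssec-2.2}.
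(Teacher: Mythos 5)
Your proof is correct, and it takes a genuinely different route from the paper's. The paper constructs the crystal directly from the data: after twisting so that $M$ is concentrated in $[0,b]$ it sets $L=M^0$ and $\phi=\varphi\circ f^b\colon L\to L$, and then verifies by hand --- chasing an element $x$ with $\phi(x)\in p^rL$ backwards through the chain of injections $M^0/vM^1\hookrightarrow M^1/vM^2\hookrightarrow\cdots$ furnished by criterion (c) of Theorem~\ref{thm.virtual.crystals.2} --- that $v^r\colon M^r\to L$ has image exactly $\{x\in L\mid\phi(x)\in p^rL\}$, so that the standard construction applied to $(L,\phi)$ reproduces $(M,f,v,\varphi)$ on the nose. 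You instead treat Corollary~\ref{cor.virtual.crystals} as a black box producing some crystal $(D,\phi,L)$ whose gauge is $(M,f,v)$, and then repair the Frobenius by twisting $\phi$ by the $W$-linear lattice automorphism $w=\varphi\varphi_0^{-1}$; the verification then collapses to the one-line observation that $w$ preserves each $p^rL$, hence each $M_0^r$, and that $p^{-b}(w\phi)=w\varphi_0=\varphi$. Your route is more modular and avoids redoing the filtration chase (which in the paper's development has already been paid for once, inside the proofs of Theorem~\ref{thm.virtual.crystals.2} and Corollary~\ref{cor.virtual.crystals}); it also isolates cleanly the one new phenomenon in this theorem, namely that matching the gauge does not automatically match $\varphi$, and the discrepancy is always a lattice automorphism. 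What the paper's argument buys in exchange is a canonical recipe, $L=M^0$, $\phi=\varphi f^b$, independent of any choice of isomorphism $\beta$; this is precisely the formula reused in \S\ref{ssec-2.3} to pass from $\varphi$-gauges to (weighted) Dieudonn\'e modules, so the paper has a reason to want the explicit construction rather than a pure existence statement.
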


\begin{proof}
One direction follows from corollary \ref{cor.virtual.crystals}. For the other direction assume that
$(M,f,v)$ is free. Assume that $M$ is concentrated in the finite interval $[a,b]$.
The functor in theorem \ref{thm.virtual.crystals.1} is compatible with twists: If $\phi$ is multiplied by $p^i$,
then the associated gauge $M$ is replaced by $M(-i)$. Therefore we may assume that $a=0$. Then the $\varphi$-$W$-gauge
corresponds to the finite diagram
$$
M^0 \mathop{\toto}\limits^{f}_{v} \ldots \mathop{\toto}\limits^{f}_{v} M^{r-1} \mathop{\toto}\limits^{f}_{v} M^r
\mathop{\toto}\limits^{f}_{v} M^{r+1} \mathop{\toto}\limits^{f}_{v} \ldots \mathop{\toto}\limits^{f}_{v} M^b\,,
$$
together with a $\sigma$-semi-linear isomorphism
$$
\varphi: M^{+\infty} = M^b \mathop{\longrightarrow}\limits^{\sim} M^0 = M^{-\infty}\,.
$$
All maps $f$ and $v$ are injective, because $fv=vf=p$. Let $L = M^0$, and define the $\sigma$-linear
endomorphism
$$
\phi = \varphi\circ f^b:\; L = M^0 \mathop{\longrightarrow}\limits^{f^b} M^b \mathop{\longrightarrow}\limits^{\varphi}_{\sim} M^0 = L\,.
$$
Then $(L,\varphi)$ is a crystal over $k$, and we claim that the associated $\varphi$-$W$-gauge
is canonically isomorphic to $(M,f,v,\varphi)$. In fact, first we claim that, for $0 \leq r \leq b$,
the injective map $v^r: M^r \rightarrow L$ has the image $L^r = \{\,x\in L\,\mid\, \phi(x)\in p^rL\,\}$.

First of all, we have $\phi(v^rx) = \varphi(f^bv^rx) = \varphi(p^rf^{b-r}x) \in p^rL$.
Conversely, by the assumption and criterion (c) in theorem \ref{thm.virtual.crystals.2}, all maps
$$
M^0/vM^1 \mathop{\longrightarrow}\limits^{f} M^1/vM^2 \mathop{\longrightarrow}\limits^{f} M^2/vM^3 \mathop{\longrightarrow}\limits^{f} \ldots
$$
are injective. If now $x\in L = M^0$ with $\phi(x) = \varphi(f^bx) = p^ry$ with $y\in L$, then
$f^bx = p^r\varphi^{-1}y = f^rv^r\varphi^{-1}y$ and hence $f^{b-r}x=v^r\varphi^{-1}y$ by injectivity of $f$.
By the sequence of injective maps above this implies inductively $x=x_1$ with $x_1\in M^1$,
hence $f^{b-r}vx_1 = v^r\varphi^{-1}y$, hence $f^{b-r}x_1 = v^{r-1}\varphi^{-1}y$ by injectivity of $v$ on $M$,
hence $x_1 = vx_2$ with $x_2 \in M^2$ etc. and inductively $x=v^rx_r$ with $x_r\in M^r$.
Thus we have $L^r = v^rM^r$ as claimed.

Identifying $L^r$ with $M^r$ via $v^r$ the maps $v$ become
inclusions, and the maps $f$ become multiplication by $p$, because $fv=p$.
Finally one sees that the map $\varphi: M^b \longrightarrow M^0$ identifies with
$p^{-b}\phi: L^b \longrightarrow L^0$, sending $y$ to $p^{-b}\phi(y)$ as in the
construction of the $\varphi$-gauge associated to $(L,\phi)$.
\end{proof}

\begin{corollary}\label{cor.virtual.crystals.final} The functor in theorem \ref{thm.virtual.crystals.1}
induces an equivalence of categories
$$
( \mbox{ virtual crystals } (D,\phi,M) \mbox{ over } k ) \longrightarrow
( \mbox{ finite type free } \varphi\mbox{-}W\mbox{-gauges }(M,f,v,\varphi) )\,,
$$
where we call a $\varphi$-$W$-gauge free if the underlying gauge is free.
\end{corollary}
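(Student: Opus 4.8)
The plan is to obtain the statement formally from the three preceding results, since the substantive work has all been carried out there. Write $\Phi$ for the functor of theorem \ref{thm.virtual.crystals.1}, from virtual crystals over $k$ to finite-type $\varphi$-$W$-gauges. That theorem already asserts that $\Phi$ is fully faithful; in particular it is fully faithful as a functor onto its essential image, whatever that image turns out to be, and it automatically carries morphisms of virtual crystals bijectively onto morphisms of $\varphi$-$W$-gauges between objects in the image. So the only remaining task is to identify that essential image precisely.

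First I would record that $\Phi$ in fact lands in the smaller category named in the statement: by corollary \ref{cor.virtual.crystals} the underlying $W$-gauge of $\Phi(D,\phi,M)$ is not merely a gauge with free components but a \emph{free} $W$-gauge, so $\Phi$ factors through the subcategory of finite-type free $\varphi$-$W$-gauges (in the sense of the statement, i.e.\ those whose underlying gauge is free). Conversely, theorem \ref{thm.virtual.crystals.3} says exactly that every finite-type $\varphi$-$W$-gauge with free underlying gauge lies in the essential image of $\Phi$. Hence the essential image of $\Phi$ is precisely the category of finite-type free $\varphi$-$W$-gauges.

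Combining the two points, $\Phi$, regarded as a functor into finite-type free $\varphi$-$W$-gauges, is fully faithful and essentially surjective, hence an equivalence of categories, which is the assertion. There is essentially no obstacle to overcome here: the only thing that needs a moment's care is the bookkeeping of categories and morphisms — checking that the target in the corollary (free $\varphi$-$W$-gauges with $\varphi$-gauge morphisms) is literally the essential image computed above, and that the full faithfulness supplied by theorem \ref{thm.virtual.crystals.1} restricts to this subcategory — but both the full faithfulness and the description of the image are already available in the required generality, so this is immediate.
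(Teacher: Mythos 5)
Your argument is correct and is precisely the one the paper intends: the corollary is stated without proof as an immediate consequence of Theorem~\ref{thm.virtual.crystals.1} (full faithfulness), Corollary~\ref{cor.virtual.crystals} (the image consists of free gauges), and Theorem~\ref{thm.virtual.crystals.3} (every finite-type $\varphi$-$W$-gauge with free underlying gauge is in the image). Your bookkeeping of how these three results combine is exactly the right formal deduction.
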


\begin{remark}\label{rem.free.gauges}
Let $M$ be a $W$-gauge of finite type with free components. If $M$ is concentrated in a point,
i.e., in an interval $[a,a]$, then it is obviously free, viz., isomorphic to $W(-a)^d$ for some $d$.
If $M$ is concentrated in an interval of length 1, it is free as well. For this we use criterion (c)
from theorem \ref{thm.virtual.crystals.2}. If $M$ is represented by
$$
M^a \mathop{\toto}\limits^{f}_{v} M^{a+1}\,,
$$
we have only have to show the injectivity of
$f: M^a/vM^{a+1} \rightarrow M^{a+1}/vM^{a+2}$; the injectivity at the other places is clear.
But the image of $v: M^{a+2} \rightarrow M^{a+1}$ is $pM^{a+1}$, and if $x\in M^a$ and $fx = vy = pz = fvz$
with $z\in M^{a+1}$, then $x = vz$ by injectivity of $f$.

But already if $M$ is concentrated in an interval of length 2, $M$ is not in general free.
A counterexample is the $W$-gauge
$$
N \mathop{\toto}\limits^{p}_{\supset} pN \mathop{\toto}\limits^{p}_{=} pN\,,
$$
for any free $W$-module $N\neq 0$, since here $N/v = N/p \rightarrow pN/v = 0$ is not injective.
\end{remark}

\subsection{\it Gauges and Dieudonn\'{e} modules.}\label{ssec-2.3}

\medskip

{\it A Dieudonn\'{e}-module of finite type over $k$} is a
$W$-module of finite type $M$ endowed with two additive
endomorphisms $F$ and $V$ such that $$FV= VF=p\hbox{ and  }F(\lambda
x)=\sigma(\lambda)Fx\ , \ V(\sigma(\lambda)x)= \lambda Vx\
(\forall\lambda\in W,\
\forall x\in M)$$

With an obvious definition of morphisms, Dieudonn\'{e} modules of
finite type over $k$ form a $\mathbb Z_p$-linear abelian category $Dieud(k)$.

Let M be an object of ${\mathcal G}_{ft}^{[-1,0]}(W)$. To give $M$ is the
same as giving $(M^{-1},M^{0},f,v,\varphi)$ where $M^{-1}$ and $M^0$
are $W$-modules of finite type, $f:M^{-1}\to M^0$ and $v:M^{0}\to
M_{1}$ are $W$-linear maps such that $fv=pid_{M^{0}}$ and
$vf=pid_{M^{-1}}$ and $\varphi:M^0\to M^{-1}$ is a bijective
$\varphi$-linear map. We define $F,V:M^{-1}\to M^{-1}$ by $F=\varphi f$
and $V=v\varphi^{-1}$. This turns $M^{-1}$ into a Dieudonn\'{e} module
of finite type over $k$. In this way, we get a functor
$$
{\mathcal G}_{ft}^{[-1,0]}(W)\longrightarrow Dieud(k)\,,
$$
which is an equivalence of categories.

\smallskip
Dieudonn\'{e}-modules arise from $p$-divisible groups over $k$ or from
the first crystalline cohomology of smooth projective varieties $X$ over $k$.
By the theory of the de Rham-Witt complex, the $i$-th crystalline cohomology of $X$
gets the structure of what could be called a `Dieudonn\'{e}-module of weight $i$':
a finitely generated $W$-module $M$ together with a $\sigma$-linear endomorphism $F$ and
a $\sigma^{-1}$-linear endomorphism $V$ such that $FV = VF = p^i$. Such a structure can also be obtained
by a $\varphi$-gauge with free components which is concentrated in an interval of length $i$,
$$
M^a \toto \ldots \toto M^{r-1} \mathop{\toto}\limits^{f}_{v} M^r
\mathop{\toto}\limits^{f}_{v} M^{r+1} \toto \ldots \toto M^{a+i} \mathop{\longrightarrow}\limits^{\varphi}_{\sim} M^a\,,
$$
by letting $M = M^a$, $F = \varphi f^i$, and $V = v^i \varphi^{-1}$. This gives a functor
$$
{\mathcal G}_{ft}^{[a,a+i]}(W) \longrightarrow  Dieud^i(k)\,,
$$
where $Dieud^i(k)$ is the category of Dieudonn\'{e}-modules of weight $i$, whose morphisms
are linear maps compatible with $F$ and $V$. However, for $i>1$ this functor is no longer
an equivalence of categories, because it forgets all information concerning the modules
$M^{a+1}, \ldots, M^{a+i-1}$.

\smallskip
One aim of this paper is to establish a canonical cohomology theory giving $\varphi$-$W$-gauges
of finite type $H^i_g(X/W)^\bullet$ for each $i$, concentrated in the interval $[0,i]$, whose
associated Dieudonn\'{e}-module of weight $i$ is the $i$-th crystalline cohomology $H^i_{cris}(X/W)$.
This new cohomology theory thus refines the crystalline cohomology.

\subsection{\it Effective, coeffective modules and truncations.}\label{ssec-2.4}

\medskip
We say that a gauge $M$ over $W_n=W_n(k)$ (for $1\leq n \leq\infty$) is {\it effective} (resp.
{\it coeffective}) if $v:M^{r}\to M^{r-1}$ is an isomorphism for
$r\leq 0$ (resp. $f:M^r\to M^{r+1}$ is an isomorphism for $r\geq 0$).

For any object $M$ of $\cG_{ft}(W_n)$, $M(i)$ is effective for $i<<0$ and
coeffective for $i>>0$.

To any gauge $W_n$-gauge $M$, we may associate the
co-effective $W_n$-gauge $M_{\leq 0}$ defined as follows:
we have $M_{\leq 0}^r=M^r$ for $r<0$ and $M_{\leq 0}^{r}=M^0$ for
$r\in\mathbb N$, with
$fx=x$ if $x\in M_{\leq 0}^r$ and $r\geq 0$ and $vx=px$ if $x\in
M_{\leq 0}^r$ and $r>0$.

If $M$ is of finite type, so is $M_{\leq 0}$ and we may view
$M\mapsto M_{\leq 0}$ as a functor from $\mathcal \cG_{ft}(W_n)$ to the
full sub-category $\cG_{ft}^{\leq 0}(W_n)$ of coeffective gauges of finite type, which is a
right adjoint of the inclusion functor. We observe that the obvious
map $M_{\leq 0}\to M$ is not in general injective.

For any $W_n$-gauge $M$, the natural maps
$M^0\to (M_{\leq0})^{+\infty}$ and $(M_{\leq
0})^{-\infty}\to M^{-\infty}$ are
isomorphisms. Therefore, we may also view $M \mapsto M_{\leq 0}$ as a
functor from the category $\varphi$-$\cM_{ft}(W_n)$ of $\varphi$-$W_n$-modules of finite type
to the full sub-category $\varphi$-$\cM_{ft}^{\leq 0}(W_n)$ of
coeffective $\varphi$-$W_n$-modules, by defining
$\varphi:M_{\leq 0}^{+\infty}\to M_{\leq 0}^{-\infty}$ as the
compositum $\varphi^0$ of the natural map $M^{0}\to M^{+\infty}$ with
the original $\varphi$.

Again this functor is a right adjoint of the inclusion functor. We
observe that, when $M$ is a $\varphi$-gauge, $M_{\leq 0}$ is not always a $\varphi$-gauge.

\section{Zariski-gauges and $F$-zips over schemes of characteristic $p$}\label{sec-3}

Let $S$ be a scheme of characteristic $p$.

\subsection{\it Zariski-gauges and Zariski-$\varphi$-gauges}\label{ssec-3.1}

\medskip
The $\varphi$-ring associated to $S$ in the Zariski topology is defined as the commutative ring (in this topology)
$D(S) = \cO_S[f,v]/(fv)$ together with the ring morphism $\varphi: D(S)^{+\infty} = \cO_S \longrightarrow \cO_S = D(S)^{-\infty}$
which is given by the absolute Frobenius $F=F_S$ on $\cO_S$ (which is the identity on $S$ and the map $x \mapsto x^p$
on the sections. Note that this is only an isomorphism if $S$ is a perfect scheme, so $D(S)$ is not in general a perfect
$\varphi$-ring.

\medskip
A Zariski gauge $M$ over $S$ is a $D(S)$-module, which is obviously just a gauge in the category of $\cO$-modules.
It is called coherent if it is of finite presentation over $D(S)$. Hence, if $S$ is noetherian, it is coherent if
and only if $M$ is concentrated in a finite interval and each component $M^r$ is a coherent $\cO_S$-module.

\medskip
In accordance with the definitions in section \ref{ssec-1.4}, a Zariski $\varphi$-module $(M,\varphi)$
is a Zariski gauge $M$ together with an $\cO_S$-linear morphism
$$
\varphi: \,(M^{+\infty})^{(p)} = \cO_S\,{}_{Fr}\hspace{-1mm}\otimes_{\cO_S} M^{+\infty} \longrightarrow M^{-\infty}\,.
$$
Here $\cN^{(p)}=\cO_S\,{}_{Fr}\hspace{-1mm}\otimes_{\cO_S}\cN$ is the usual Frobenius twist (twist with the absolute Frobenius $Fr$)
of an $\cO_S$-module $\cN$. Since this operation is a right exact functor, the $\varphi$-$\cO_S$-modules form
an abelian category, see section \ref{ssec-1.4}. A $\varphi$-module is a $\varphi$-gauge if $\varphi$ is
an isomorphism. They form a subcategory which is closed under direct sums and direct factors.

\subsection{\it The relationship with $F$-zips}\label{ssec-3.2}

\medskip
An $F$-zip over $S$ is defined as a locally free coherent $\cO_S$-module $\cM$ together with
\begin{itemize}
\item[(a)] A descending filtration $C = (C^i)_{i \in \z}$ on $\cM^{(p)}$ by locally direct summands,

\item[(b)] An increasing filtration $D = (D_i)_{i \in \z}$ on $\cM$ by locally direct summands,

\item[(c)] a family $(\varphi_i)_{i\in\z}$ of $\cO_S$-linear isomorphisms
$\varphi_i:\, C^i/C^{i+1} \mathop{\longrightarrow}\limits^{\sim} D_i/D_{i-1}$.
\end{itemize}
A morphism of $F$-zips $(\cM,C,D) \rightarrow (\cM',C',D)$ is an $\cO_S$-linear
morphism $\cM \longrightarrow \cM'$ respecting the filtrations. (This is the modified
definition in \cite{Wed}, improving the original definition of Moonen and Wedhorn \cite{MW}.
If $S$ is perfect, then both definitions are equivalent.)

\medskip
Then one has the following result.

\begin{theorem}\label{thm.zips.and.gauges} (see \cite{Schn})
There is a canonical full embedding of categories
$$
(\, F\mbox{-zips over } S\,) \longrightarrow (\, \varphi\mbox{-}\cO_S\mbox{-gauges} \,)\,.
$$
The essential image consist of the $\varphi$-$\cO_S$-gauges which are rigid, coherent, and have locally free components.
\end{theorem}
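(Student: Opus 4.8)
\emph{Overview.} The statement has three ingredients: the construction of the functor, its full faithfulness, and the characterisation of the essential image; I expect the last to be the crux. For the first two I would essentially follow \cite{Schn}, so I only sketch the shape of the argument.

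\emph{The functor.} Given an $F$-zip $(\cM,C^\bullet,D_\bullet,(\varphi_i))$, both filtrations are trivial outside a finite interval $[a,b]$. The plan is to glue the Rees module of the decreasing filtration $C^\bullet$ on $\cM^{(p)}$ — which is to govern the maps $f$ near $+\infty$ — with the Rees module of the increasing filtration $D_\bullet$ on $\cM$ — which governs the maps $v$ near $-\infty$ — along the isomorphisms $\varphi_i$ of graded pieces, obtaining a graded $D(S)$-module $M^\bullet$ concentrated in $[a,b]$, and to assemble the $\varphi_i$ into a semilinear $\varphi$. By construction $M^{-\infty}\cong\cM$, with $D_\bullet$ recovered from the maps $v$, and $(M^{+\infty})^{(p)}\cong\cM^{(p)}$, with $C^\bullet$ recovered from the maps $f$ through $\varphi$; since every $\varphi_i$ is an isomorphism, so is $\varphi$, hence $M$ is a $\varphi$-$\cO_S$-gauge. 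It is coherent, because each $M^r$ is coherent — built from the locally free $\cM$ and $\cM^{(p)}$ by kernels and cokernels of locally split maps — and $M^\bullet$ is concentrated in $[a,b]$; functoriality is inherited from that of Rees modules.

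\emph{Full faithfulness.} I would produce the inverse on the essential image: from such a gauge $M$ set $\cM:=M^{-\infty}$, $\cM^{(p)}:=(M^{+\infty})^{(p)}$, let $D_\bullet$ be the filtration of $\cM$ by the images of the canonical maps $M^r\to M^{-\infty}$, let $C^\bullet$ be the corresponding filtration of $\cM^{(p)}$ induced by the maps $M^r\to M^{+\infty}$ through $\varphi$, and read off the $\varphi_i$ from $\varphi$ on graded pieces. A morphism of $\varphi$-$\cO_S$-gauges is compatible with $f$, $v$ and $\varphi$, hence preserves all this data and induces a morphism of $F$-zips; conversely such a gauge morphism is recovered from the associated morphism of $F$-zips, since a rigid coherent gauge with locally free components is determined by its associated graded data together with $\varphi$. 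The delicate point is that the $\varphi$-compatibility imposed on gauge morphisms matches, under this dictionary, precisely the conditions defining morphisms of $F$-zips (including the vanishing conditions that the Frobenius twist forces on homomorphisms between pieces of different weight), so that the two $\mathrm{Hom}$-sets are identified.

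\emph{The essential image.} That every $M$ in the image is coherent was noted above; checking locally on $S$, where $\cM$ splits into its weight pieces, $M$ becomes a finite direct sum of twists $\cO_S(-i)^{\,d_i}$, so its components are locally free and it is rigid — by the same explicit diagram computation that shows $\cO_S(0)=D(S)$ and all its twists are rigid, exactly as in the proof of Lemma \ref{lem.free-mod-p}. For the converse — the main obstacle — one takes a rigid coherent $\varphi$-$\cO_S$-gauge $M$ with locally free components, forms $(\cM,C^\bullet,D_\bullet,(\varphi_i))$ as above, and must check that it is an $F$-zip: that $C^\bullet$ and $D_\bullet$ are by locally direct summands and that the $\varphi_i$ are isomorphisms. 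This is the relative analogue over $\cO_S$ of the equivalence ``rigid $\Leftrightarrow$ free'' proved over a field in Lemma \ref{lem.free-mod-p} and over $W(k)$ in Theorem \ref{thm.virtual.crystals.2}, and the difficulty is that quasi-rigidity and strictness are phrased via exact sequences and monomorphisms that need not survive base change to residue fields. I would argue by first using quasi-rigidity and Lemma \ref{lem.quasi-rigid} to see that the graded pieces $M^r/f$ and $M^r/v$ have locally constant rank, then combining strictness with local freeness of the $M^r$ and Nakayama to deduce that the subsheaves cutting out $C^\bullet$ and $D_\bullet$ are locally direct summands, and finally invoking that $\varphi$ is an isomorphism of the gauge to conclude that the induced maps on graded pieces are isomorphisms; once $S$ is localised so that everything splits, a direct comparison shows that $G$ applied to the reconstructed $F$-zip returns $M$.
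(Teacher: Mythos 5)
The paper gives no proof of this statement; it is cited directly from Schnellinger's thesis \cite{Schn}, so there is no in-text argument to compare your sketch against. On its own terms, your overall strategy -- build the functor from the two filtrations, prove full faithfulness by producing an inverse on the essential image, and treat the characterisation of the image as the crux -- is the right shape, but two points deserve attention.

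First, your dictionary between the $F$-zip filtrations and the maps $f$, $v$ appears to be reversed. From the paper's own universal example (Propositions \ref{prop.4-term.sequences} and \ref{prop.cartier.iso.2}), the \emph{decreasing} filtration is the one cut out by the images of the maps $v_r:M^r\to M^{-\infty}$, while the \emph{increasing} filtration $D_\bullet$ is cut out by the images of $\varphi\circ f_r$. The same is visible in the pure-weight-$i$ gauge $\cO_S(-i)$: the images of the $v_r$ in $M^{-\infty}$ jump down at $r=i+1$ (decreasing), while the images of the $f_r$ in $M^{+\infty}$ jump up at $r=i$ (increasing). So it is $v$ that recovers $C^\bullet$ (up to Frobenius twist) and $f$, transported through $\varphi$, that recovers $D_\bullet$ -- the opposite of what your ``Rees module'' heuristic and your second paragraph assert. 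This is a consistent relabelling rather than a fatal error, but it would propagate through a written-out proof and should be fixed. Relatedly, the construction is cleaner described as a degreewise fibre product along the graded pieces identified by $\varphi_r$ than as a gluing of two Rees modules, and one must put the Frobenius twist on exactly one side so that $\varphi:(M^{+\infty})^{(p)}\to M^{-\infty}$ comes out $\cO_S$-linear rather than semilinear.

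Second, for the essential image your outline is plausible but leaves the globalisation step, which carries the real weight, almost entirely implicit. Lemma \ref{lem.quasi-rigid} is a statement about finite-length objects in an abstract abelian category; to use it here one must pass to the residue field at each point of $S$, apply Lemma \ref{lem.free-mod-p} over that field to split the fibre of $M$ as a sum of twists, and then argue -- using local freeness of the $M^r$ together with semicontinuity or a flatness argument -- that the kernels and images of $f$ and $v$ are themselves locally free of locally constant rank, so that the induced filtrations are by local direct summands. Since the paper itself supplies no argument, a sketch at this level is acceptable as a blind reconstruction, but you should treat that globalisation step as the substantive content of the proof rather than a routine reduction to the field case.
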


\section{Zariski-gauges and displays over schemes of characteristic $p$.}\label{sec-4}

Langer and Zink defined the notion of a display over a ring $R$ of positive characteristic.
Let $W(R)$ be the ring of Witt vectors for $R$, and let $I(R) = VW(R)$, the image of the
Verschiebung $V$ on $W(R)$. Then a {\it predisplay} consists of the following data:

\medskip\noindent
1) A chain of morphisms of $W(R)$-modules
$$
\ldots \longrightarrow P_{i+1} \mathop{\longrightarrow}\limits^{\iota_i} P_i \longrightarrow \ldots
\longrightarrow P_1 \mathop{\longrightarrow}\limits^{\iota_0} P_0\,,
$$
2) For each $i \geq 0$ a $W(R)$-linear map
$$
\alpha_i:\; I_R\otimes_{W(R)} P_i \longrightarrow P_{i+1} \,,
$$
3) For each $i \geq 0$ a Frobenius-linear map
$$
F_i: \; P_i \longrightarrow P_0\,.
$$
These are required to satisfy the following conditions: The composition $\iota_i\circ\alpha_i$ is the
multiplication $I_R\otimes P_i \longrightarrow P_i$, and one has
$$
F_{i+1}(\alpha_i(V(\eta)\otimes x)) = \eta F_ix, \quad \mbox{  for  } \eta\in I_R, x \in P_i\,.
$$
Predisplays form an abelian category, in an obvious way.

Finally, a predisplay is called a {\it display} of degree $d$ if there are finitely generated
projective $W(R)$-modules $L_0, \ldots,L_d$ such that
$$
P_i = (I\otimes L_0) \oplus \ldots \oplus (I\otimes L_{i-1}) \oplus L_i \oplus \ldots \oplus L_d\,,
$$
and such that the structural maps $\iota_i, \alpha_i$ and $F_i$ come from Frobenius-linear maps
$$
\Phi_i: L_i \longrightarrow L_0 \oplus \ldots \oplus L_d
$$
with the property that $\oplus_i \Phi_i$ is a Frobenius-linear automorphism of $L_0\oplus\ldots\oplus L_d$.
(See \cite{LZ} Definition 2.5 for the precise prescription how to get a predisplay out of these data).
These data are not supposed part of the datum of a display, only the existence matters.
Thus the displays form a full subcategory of the category of predisplays.

Then one has the following result.

\begin{theorem}\label{thm.displays-gauges} (see \cite{Wid})
There is a fully faithful embedding of categories
$$
(\, \mbox{ predisplays over } R \,) \longrightarrow ( \, \varphi\mbox{-}W(R)\mbox{-modules} \, )\,,
$$
which maps the category of displays to the category of $\varphi$-$W(R)$-modules for which $\varphi$
is an epimorphism.
\end{theorem}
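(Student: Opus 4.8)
\medskip
\noindent\emph{Sketch of proof} (see \cite{Wid} for details). The plan is to build the functor explicitly, arranging the terms $P_i$ of a predisplay as the homogeneous components of a $\varphi$-$W(R)$-module, then to deduce full faithfulness from the fact that this arrangement loses no information, and finally to translate the generation condition in the definition of a display into the statement that $\varphi$ is an epimorphism. Let $\mathcal P=((P_i,\iota_i),(\alpha_i),(F_i))_{i\ge 0}$ be a predisplay over $R$. Since $R$ has characteristic $p$, the canonical map $W(\mathbb F_p)\to W(R)$ together with the functoriality of $V$ gives the identity $V(1)=p$ in $W(R)$; in particular $p\in I_R=VW(R)$, so that $\alpha_i(p\otimes-)\colon P_i\to P_{i+1}$ makes sense. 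Define a graded $W(R)$-module $M=M(\mathcal P)$ by $M^r=P_r$ for $r\ge 0$ and $M^r=P_0$ for $r\le 0$, with $v\colon M^{r+1}\to M^r$ equal to $\iota_r$ for $r\ge 0$ and to $\mathrm{id}_{P_0}$ for $r<0$, and $f\colon M^r\to M^{r+1}$ equal to $x\mapsto\alpha_r(p\otimes x)$ for $r\ge 0$ and to multiplication by $p$ for $r<0$. Then $vf=p$ on $M^r$ ($r\ge 0$) is exactly the predisplay condition that $\iota_r\circ\alpha_r$ is the multiplication $I_R\otimes P_r\to P_r$, while $fv=p$ on $M^{r+1}$ reads $\alpha_r(p\otimes\iota_r(y))=py$; this identity, and more generally the compatibilities among the different $\iota_i$ and $\alpha_i$, is built into the Langer--Zink data, which amounts to saying that the chain $(P_i,\iota_i,\alpha_i)$ is a graded module over the extended Rees ring $\bigoplus_{n\in\z}I_R^{\,n}$ of $I_R\subset W(R)$ (with $I_R^{\,n}:=W(R)$ for $n\le 0$), a ring which maps to $D(W(R))=W(R)[f,v]/(fv-p)$ by sending $v$ to the canonical element of degree $-1$ and $f$ to $V(1)$ in degree $1$. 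Thus $(M,f,v)$ is a $W(R)$-gauge, with $M^{-\infty}=P_0$ and $M^{+\infty}=\varinjlim_i P_i$ along the maps $\alpha_i(p\otimes-)$. By the case $\eta=1$ of the compatibility between the $F_i$ and the $\alpha_i$ one has $F_{i+1}\circ\alpha_i(p\otimes-)=F_i$, so the $F_i$ glue to a $\sigma$-semilinear map $\varphi\colon M^{+\infty}\to M^{-\infty}$, compatible with the $\varphi$-ring structure on $D(W(R))$ given by the Frobenius $\sigma$ of $W(R)$ on $D(W(R))^{\pm\infty}=W(R)$. This produces the $\varphi$-$W(R)$-module $M(\mathcal P)$, functorially in $\mathcal P$.

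Next I would check full faithfulness. Faithfulness is immediate, since a morphism of predisplays is a family of maps $P_i\to P_i'$ and these reappear as the homogeneous components of the induced morphism of $\varphi$-$W(R)$-modules. For fullness, let $\psi\colon M(\mathcal P)\to M(\mathcal P')$ be a morphism of $\varphi$-$W(R)$-modules; its homogeneous components in degrees $\ge 0$ are $W(R)$-linear maps $\phi_i\colon P_i\to P_i'$, automatically compatible with the $\iota_i$ (which are the maps $v$) and with the $\alpha_i$ (which are encoded by $f$, using the $W(R)$-bilinearity of $\alpha_i$, and, over a non-perfect $R$, the fact that the target category of $\varphi$-$W(R)$-modules is to be read over the extended Rees ring, so that the whole map $\alpha_i$ is recovered, not just its restriction to $p\otimes P_i$). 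Compatibility of the $\phi_i$ with the $F_i$ follows from compatibility of $\psi$ with $\varphi$ on the limit terms. Hence $(\phi_i)$ is a morphism of predisplays inducing $\psi$.

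It then remains to identify where displays go. By definition a display is a predisplay whose underlying chain admits a normal decomposition, and a defining feature of such data is that $F_1(P_1)=\dot F(Q)$ generates $P_0$ over $W(R)$. Under the functor, $F_1$ factors as $P_1\to M^{+\infty}\xrightarrow{\ \varphi\ }M^{-\infty}=P_0$, so $\varphi(M^{+\infty})\supseteq F_1(P_1)$ generates $M^{-\infty}$; equivalently, the linearized map $\varphi'_M\colon W(R)\,{}_{\sigma}\!\otimes_{W(R)}M^{+\infty}\to M^{-\infty}$ is surjective, that is, $\varphi$ is an epimorphism of $\varphi$-$W(R)$-modules. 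As the existence of a normal decomposition is a property and not an additional datum, displays form a full subcategory of predisplays, and the functor restricts to a fully faithful embedding into the full subcategory of $\varphi$-$W(R)$-modules with $\varphi$ an epimorphism.

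The step I expect to be the main obstacle is the correct set-up of the target category over a non-perfect $R$: one has to arrange that the full Frobenius-linear structure map $\alpha_i\colon I_R\otimes_{W(R)}P_i\to P_{i+1}$ remains visible in the associated gauge, rather than only its value on $p\otimes P_i$, since this is precisely what makes the functor fully faithful (over a perfect $R$ one has $I_R=V(1)W(R)$ and the issue disappears). Once the target is taken to be graded modules over the extended Rees ring of $I_R$, together with $\varphi$, the remaining verifications — well-definedness of $f$, $v$ and $\varphi$, functoriality, and the dictionary between displays and $\varphi$-epimorphisms — are routine unwindings of the Langer--Zink definitions.
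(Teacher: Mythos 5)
The paper itself gives no proof of this theorem, only the citation to Wid's thesis, so there is nothing to compare against beyond internal consistency; on that score your sketch is a natural candidate and the one explicit numerical fact you invoke, $V(1)=p$ in $W(R)$ for $R$ of characteristic $p$ (equivalently $p\cdot x = VF(x)$), is correct. However, you have not resolved the difficulty that you yourself flag in your last paragraph, and that difficulty is a genuine gap. The target category in the theorem is the category of $\varphi$-$W(R)$-modules, which throughout this paper (sections \ref{ssec-1.4} and \ref{ssec-3.1}, and the use in section \ref{sec-2}) means graded modules over $D(W(R))=W(R)[f,v]/(fv-p)$ together with a semilinear $\varphi\colon M^{+\infty}\to M^{-\infty}$. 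With $M^r=P_r$ and $f=\alpha_r(p\otimes -)$, $v=\iota_r$, the $D(W(R))$-module structure records only $\alpha_r(p\otimes -)$, that is, the restriction of $\alpha_r$ to $p\otimes P_r$. When $R$ is not perfect one has $pW(R)=V(1)W(R)=V(F(W(R)))\subsetneq V(W(R))=I_R$, so $\alpha_r$ is strictly more information than $\alpha_r(p\otimes -)$; two predisplays with the same $(P_i,\iota_i,F_i)$ and the same $\alpha_i(p\otimes -)$ but different $\alpha_i$ would be identified, and the functor would fail to be full. Your proposed remedy --- reading the target as graded modules over the extended Rees algebra $\bigoplus_n I_R^{\,n}$ of $I_R$ --- does not prove the stated theorem: it proves a theorem with a different, larger target category. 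Restriction of scalars along the graded ring map $D(W(R))\to \bigoplus_n I_R^{\,n}$ (sending $f\mapsto p\in I_R$, $v\mapsto 1$) is faithful but, since this map is not surjective in degree one when $R$ is imperfect, it is not an epimorphism of rings, so it does not transport full faithfulness. Thus either the stated theorem implicitly restricts to perfect $R$ (where $I_R=pW(R)$ and your construction works verbatim), or ``$\varphi$-$W(R)$-module'' in this paper secretly means a $\varphi$-module over the Rees-type ring, or the actual construction in \cite{Wid} differs from yours in how it packages the $\alpha_i$ into the components $M^r$. Your sketch cannot be called a proof of the theorem as stated until this is settled; it is precisely the non-routine part.

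Two smaller remarks. First, the paper writes the Frobenius compatibility as $F_{i+1}(\alpha_i(V(\eta)\otimes x))=\eta F_i x$ for $\eta\in I_R$, whereas your gluing of the $F_i$ into $\varphi$ uses the case $\eta=1$, which requires the (standard, Langer--Zink) form with $\eta\in W(R)$; you should say explicitly that this is what you use. Second, for the implication ``display $\Rightarrow$ $\varphi$ is an epimorphism'' you argue only that $F_1(P_1)$ generates; the relevant property of a display in the Langer--Zink formulation is that the linearisation of $\bigoplus_i\Phi_i$ is an automorphism of $L_0\oplus\cdots\oplus L_d=P_0$, and you should explain how the image of $\varphi$ on $M^{+\infty}$ relates to the images of the $\Phi_i$ rather than appealing to $F_1$ alone.
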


\section{Topologies.}\label{sec-5}

Recall that a morphism of schemes is said to be {\it syntomic} if it
is flat and locally a
complete intersection (which implies that it is locally of finite type).

We say that a morphism $\mbox{Spec } B\to \mbox{Spec } A$ of affine schemes of
characteristic $p$ is {\it
an extraction of $p$-th root} if one may write $B=A[t]/(t^p-a)$, for
some $a\in A$.

\renewcommand{\thefootnote}{\arabic{footnote}}

We say that a morphism $X\to Y$ of $\Bbb F_p$-schemes is a {\it
$p$-root-morphism} (resp. is {\it
quiet}\footnote{quiet abbreviates quasi-\'{e}tale}) if locally for the Zariski topology (resp. for the
\'{e}tale topology), it may
be written as a successive extractions of $p$-th roots.

In characteristic $p$, we have the following inclusions~:

$$
\begin{array}{cccccc}
\hbox{open immersions} & \subset&  \hbox{\'{e}tale morphisms}& \subset& \hbox{flat morphisms}\\
\cap & &\cap & & \cup\\
p\hbox{-morphisms} & \subset & \hbox{quiet morphisms} & \subset & \hbox{syntomic morphims}
\end{array}
$$

\bigskip
A ring $A$  of characteristic $p$ is {\it perfect} if the Frobenius
$\varphi: A \rightarrow A, a\mapsto a^p$ is bijective. A scheme
of characteristic $p$ is perfect if ${\mathcal O}_X$ is a sheaf of
perfect rings.

\begin{lemma}\label{lem.perfect}
Let $A$ be a noetherian ring of characteristic $p$. Then the following holds.

(1) If $\varphi: A \rightarrow A$ is surjective, then $A$ is perfect.

(2) The subring $A_{per} = \cap_n \varphi^n(A)$ is a perfect ring.

\end{lemma}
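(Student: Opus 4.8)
The plan is to prove the two parts separately, reducing both to standard commutative algebra over a noetherian base.

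For part (1): assume $\varphi\colon A \to A$, $a \mapsto a^p$, is surjective; I must show it is injective, i.e. $A$ is reduced. Suppose $a \in A$ with $a^p = 0$ but $a \neq 0$. The idea is to use surjectivity to manufacture a strictly ascending (or descending) chain contradicting noetherianity. Concretely, by surjectivity write $a = b_1^p$ for some $b_1$; then $b_1^{p^2} = a^p = 0$, and iterating, $a = b_n^{p^n}$ with $b_n^{p^{n+1}} = 0$ for all $n$. Consider the chain of ideals $\mathrm{Ann}(b_1) \subseteq \mathrm{Ann}(b_2) \subseteq \ldots$ (or rather compare annihilators of the $b_n$, using that $b_n = b_{n+1}^p$ so $\mathrm{Ann}(b_{n+1}) \subseteq \mathrm{Ann}(b_n)$ — I need to be careful about the direction). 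The cleaner route: let $I = \ker(\varphi)$, the nilradical-with-exponent-$p$ part. Surjectivity of $\varphi$ means $A/I \cong A$ as rings (via $\varphi$), so $A \cong A/I \cong A/I^{[p]} \cong \ldots$, and the kernels of the composites $\varphi^n$ form an ascending chain $\ker \varphi \subseteq \ker \varphi^2 \subseteq \ldots$; I want to argue this stabilizes and derive that $\varphi$ is injective. Alternatively, and most efficiently, I expect to invoke that a noetherian ring has finitely many minimal primes $\mathfrak p_1,\ldots,\mathfrak p_m$, that $\varphi$ permutes the minimal primes (since $\varphi$ surjective implies $\varphi$ maps $\mathrm{Spec}$ injectively, hence bijectively on the finite set of minimal primes — $\varphi^{-1}(\mathfrak p) = \{x : x^p \in \mathfrak p\} = \mathfrak p$ since $\mathfrak p$ is radical, wait that shows $\varphi^{-1}(\mathfrak p) = \mathfrak p$ for ANY prime, so $\varphi$ induces the identity on $\mathrm{Spec}$), hence the nilradical $\mathfrak N = \cap \mathfrak p_i$ satisfies: for $a \in \mathfrak N$, $a^N = 0$ some $N$; pick $b$ with $b^{p^k} = a$ and $p^k \geq N$, then $b^{p^k} = a$ and also I want $b = 0$. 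Hmm, this needs the ascending-chain argument after all. So the honest main step is: $\ker\varphi \subseteq \ker\varphi^2 \subseteq \cdots$ stabilizes at some $\ker\varphi^n = \ker\varphi^{n+1}$; combined with surjectivity of $\varphi$ (hence of $\varphi^n$), if $x \in \ker\varphi$ write $x = \varphi^n(y)$, then $\varphi^{n+1}(y) = \varphi(x) = 0$ so $y \in \ker\varphi^{n+1} = \ker\varphi^n$, so $x = \varphi^n(y) = 0$. This is clean and is the heart of (1).

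For part (2): set $A_{\mathrm{per}} = \bigcap_n \varphi^n(A)$, which is clearly a subring and of characteristic $p$; I must show $\varphi$ restricted to it is bijective. Injectivity is automatic if $A$ is reduced, but $A$ need not be reduced here — however, $A_{\mathrm{per}}$ is reduced: if $x \in A_{\mathrm{per}}$ with $x^p = 0$, then for each $n$, $x \in \varphi^{n}(A)$, so $x = \varphi^{n}(y_n) = y_n^{p^n}$, whence $x^p = y_n^{p^{n+1}} = 0$; as $n$ grows this is just nilpotence, so I again need a chain argument, OR I observe that actually injectivity of $\varphi$ on $A_{\mathrm{per}}$ follows from surjectivity of $\varphi$ on $A_{\mathrm{per}}$ plus part (1) — but $A_{\mathrm{per}}$ need not be noetherian, so (1) does not directly apply. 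Better: show surjectivity first, then injectivity by a direct nilpotent-chain argument inside $A_{\mathrm{per}}$. Surjectivity: given $x \in A_{\mathrm{per}}$, I must find $z \in A_{\mathrm{per}}$ with $z^p = x$. For each $n$, $x \in \varphi^{n+1}(A) = \varphi(\varphi^n(A))$, so the set $T_n = \{z \in \varphi^n(A) : z^p = x\}$ is nonempty; moreover $T_{n+1} \subseteq T_n$, so $\{T_n\}$ is a descending chain of nonempty subsets — but to extract a common point I need finiteness. Here is where noetherianity of $A$ (not $A_{\mathrm{per}}$) is used: the fiber $\{z \in A : z^p = x\}$ is a coset of $\ker\varphi = \{z : z^p = 0\}$, and $\ker\varphi$ is a finitely generated ideal of the noetherian ring $A$ — in fact it is killed by... hmm, $\ker\varphi$ consists of elements whose $p$-th power is zero, and $(\ker\varphi)^{[p]} = 0$. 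The fiber $\varphi^{-1}(x)$ is finite iff $\ker\varphi$ is finite, which need not hold. So instead: the sets $S_n := \{z \in A : z^p = x,\ z \in \varphi^n(A)\}$; I claim $\varphi^n(A)$ is noetherian (it's a quotient of $A$, namely $A/\ker\varphi^n$, since $\varphi^n$ surjects... no wait, $\varphi^n$ is not assumed surjective on $A$). Since $\varphi^n\colon A \to A$ has image $\varphi^n(A)$, we have $\varphi^n(A) \cong A/\ker\varphi^n$ as rings, hence noetherian. Now consider the descending chain of ideals $\ker(\varphi^{}|_{\varphi^n(A)})$ — actually I think the slick argument is: the ideals $\varphi^n(A) \cap \ker\varphi$ in $A$ form a descending chain? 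No — $\varphi^n(A) \supseteq \varphi^{n+1}(A)$ so $\varphi^n(A)\cap\ker\varphi \supseteq \varphi^{n+1}(A)\cap\ker\varphi$, a descending chain of submodules of the noetherian module $\ker\varphi$, hence it stabilizes at some $n_0$: $\varphi^n(A)\cap\ker\varphi = \varphi^{n_0}(A)\cap\ker\varphi =: K$ for $n \geq n_0$. Then pick any $z_0 \in S_{n_0}$ (nonempty since $x \in \varphi^{n_0+1}(A)$); I claim I can adjust $z_0$ by an element of $K$ to land in $\bigcap_n S_n$. Given $n \geq n_0$, pick $z_n \in S_n$; then $z_n - z_0 \in \ker\varphi$ and both lie in $\varphi^{n_0}(A)$, so $z_n - z_0 \in \varphi^{n_0}(A)\cap\ker\varphi = K = \varphi^n(A)\cap\ker\varphi$, so $z_n - z_0 \in \varphi^n(A)$, hence $z_0 = z_n - (z_n - z_0) \in \varphi^n(A)$; as this holds for all $n$, $z_0 \in A_{\mathrm{per}}$ and $z_0^p = x$, giving surjectivity. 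Injectivity on $A_{\mathrm{per}}$: if $z \in A_{\mathrm{per}}$ with $z^p = 0$, the same stabilization — $z \in \varphi^n(A)\cap\ker\varphi = K$ for all $n \geq n_0$, but also $z \in \varphi^n(A)$ for arbitrarily large $n$; writing $z = \varphi^{n_0}(w)\cap\ldots$ hmm, I should run the analogous descending-chain argument on kernels: $\ker\varphi \supseteq \varphi(A)\cap\ker\varphi \supseteq \varphi^2(A)\cap\ker\varphi\supseteq\cdots$ stabilizes, call the stable value $K$; then $K \subseteq \varphi^n(A)$ for all $n$ so $K \subseteq A_{\mathrm{per}} \cap \ker\varphi$, and conversely; so $A_{\mathrm{per}}\cap\ker\varphi = K = \varphi(A)\cap K = \varphi(K)$... this shows $\varphi\colon K \to K$ is surjective on the finitely generated module $K$ with $K^{[p]} = 0$, and a surjective endomorphism of a noetherian module is injective, forcing $K = \ker(\varphi|_K)$-analysis to give $K = 0$; more simply, $\varphi|_K$ surjective and $\varphi(K) = K$ with $\varphi$ nilpotent on $K$ (as $K \subseteq \ker\varphi$ means $\varphi|_K = 0$!) — wait, $K \subseteq \ker\varphi$ literally says $\varphi(k) = k^p = 0$ for $k \in K$, so $\varphi(K) = 0$; but we just derived $\varphi(K) = K$; hence $K = 0$, so $A_{\mathrm{per}}\cap\ker\varphi = 0$, i.e. $\varphi$ is injective on $A_{\mathrm{per}}$.

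The main obstacle I anticipate is bookkeeping in part (2): $A_{\mathrm{per}}$ itself carries no noetherian hypothesis, so every finiteness input must be extracted from $A$ via the stabilization of the descending chain $\{\varphi^n(A)\cap\ker\varphi\}_n$ inside the noetherian $A$-module $\ker\varphi$ (equivalently, from the ascending chain $\{\ker\varphi^n\}_n$). Once that stabilization is in hand, both the surjectivity and injectivity of $\varphi$ on $A_{\mathrm{per}}$ follow by the coset-adjustment trick above. Part (1) is comparatively routine, resting only on the stabilization of $\{\ker\varphi^n\}_n$ together with the hypothesis that $\varphi$ (hence each $\varphi^n$) is surjective.
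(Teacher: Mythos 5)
Part (1) is correct and is essentially the paper's own argument: the chain $\ker\varphi \subseteq \ker\varphi^2 \subseteq \cdots$ stabilizes (the paper writes this as $A(p^n)=\{a:a^{p^n}=0\}$), and surjectivity of $\varphi^n$ then forces injectivity of $\varphi$.

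For part (2) your general strategy (a coset-adjustment argument inside $\varphi^{n_0}(A)$) is a genuinely different route from the paper, which instead picks arbitrary preimages $x_n$ with $x_n^{p^{n+1}}=x$, corrects them using the stabilization of $A(p^n)$ to produce a compatible system $y_i=x_i^{p^N}$, and reads off $y_N\in A_{\mathrm{per}}$ with $y_N^p=x$. But as written your proof has a real gap at the crucial step: you assert that the descending chain $\varphi^n(A)\cap\ker\varphi$ stabilizes because it is ``a descending chain of submodules of the noetherian module $\ker\varphi$.'' This is wrong twice over. First, $\varphi^n(A)$ is a subring, not an ideal, so $\varphi^n(A)\cap\ker\varphi$ is only an additive subgroup of $\ker\varphi$, not an $A$-submodule. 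Second, and more importantly, noetherian modules satisfy the ascending chain condition, not the descending one, so even if these were submodules the conclusion would not follow. The chain does in fact stabilize, but for a reason you do not supply: since $\varphi^n(A)\cap\ker\varphi = \varphi^n(A(p^{n+1}))$ and, for $n \ge N$, $A(p^{n+1}) = A(p^N)$ with $\varphi^N$ killing $A(p^N)$, one gets $\varphi^n(A)\cap\ker\varphi = 0$ once $n\ge N$. That identification again rests on the ACC for $A(p^n)$, i.e.\ on the same fact the paper uses. Once you know the stable value is $0$, your coset argument trivializes (the $z_n$ are all equal) and injectivity is immediate; but your separate injectivity argument also has an unjustified step, namely the claim $\varphi(K)=K$ — you only show $K\subseteq \varphi(A)$, which gives $K\cap\varphi(A)=K$, not $\varphi(K)=K$. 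So both gaps are repairable, but the repair in each case is precisely the ACC observation, after which the paper's shorter derivation is available.
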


\begin{proof}
Consider the ideals $A(p^n) = \{\,a\in A\,\mid\,a^{p^n}=0\,\}$. Since $A$ is noetherian,
the ascending sequence $A(p) \subseteq A(p^2) \subseteq A(p^3)\ldots$ becomes stationary.
Assume that the union of all these ideals is equal to $A(p^N)$ for some $N\geq 1$, say.

(1): Assume that $\varphi$ is surjective and that $a \in A$ with $a^p=0$. Then there
exists an element $b\in A$ with $b^{p^N}=a$. Hence $b^{p^{N+1}}=a^p=0$ and thus
$0 = b^{p^N} = a$.

(2): Let $x\in A_{per}$. Then there exist elements $x_{N}, x_{N+1},x_{N+2},\ldots$
in $A$ such that
$$
x = x_{N}^{p^{N+1}} = x_{N+1}^{p^{N+2}} = x_{N+2}^{p^{N+3}} = \ldots
$$
This implies $(x_{N} - x_{N+1}^p)^{p^{N+1}} = 0$ and hence $(x_{N} - x_{N+1}^p)^{p^N} = 0$,
i.e., $x_{N}^{p^N} = (x_{N+1}^{p^N})^p$. Setting $y_i=x_i^{p^N}$ for $i\geq N$, we similarly get
$y_i = y_{i+1}^p$ for all $i\geq N$, so that $y_N \in A_{per}$ and $x = y_N^p$.
Hence $\varphi$ is surjective on $A_{per}$. If now $a \in A_{per}$ with $a^p=0$,
we have an element $b\in A_{per}$ with $a = b^{p^N}$, and we conclude as before,
arguing inside $A$, that $a=0$.
\end{proof}

As a consequence, if $X$ is a locally
noetherian scheme of characteristic $p$, there is a unique morphism $X\to X_{per}$
with a perfect scheme $X_{per}$
such that any morphism $X\to Y$ with $Y$ perfect factors
uniquely through $X_{per}$.
We say that a scheme $X$ of characteristic $p$ is {\it absolutely
syntomic} if it is
locally noetherian and if the morphism $X\to X_{per}$ is
syntomic.

(We do not know wether or not $X$ locally noetherian implies $X_{per}$ locally
noetherian, but we do not care).

A field $k$ of characteristic $p$ is absolutely syntomic if and only
if the extension $k/\varphi(k)$ is finite. If $Y$ is absolutely
syntomic and if $X\to Y$
is syntomic, $X$ is absolutely syntomic.

We denote by ${\mathcal C}$ the full subcategory of the category of schemes of
characteristic $p$ whose objects are absolutely syntomic schemes.

In this paper,  an {\it admissible class of morphisms of ${\mathcal C}$} is a
class of syntomic morphisms of ${\mathcal C}$ containing all the open
immersions and stable  under composition and base change. A {\it
$p$-admissible class of morphisms of ${\mathcal C}$} is an admissible
class containing the extractions of $p$-th roots.

For any admissible class $E$,  call ${\mathcal C}_{E}$ the site whose
underlying category is ${\mathcal C}$, with surjective
families of $E$-morphisms (that is of morphisms of ${\mathcal C}$
belonging to $E$) as coverings.

We set $E=$Zar (resp. $p$, \'{e}t, quiet, synt) for the class of open
immersions (resp. $p$-morphisms, \'{e}tale morphisms, quiet morphisms,
syntomic morphisms).

Let $X$ be any object of ${\mathcal C}$. For any admissible class $E$ of
morphisms of ${\mathcal C}$, we call ${\mathcal C}_{X,E}$ the site
whose underlying category is the full subcategory of $X$-schemes
$Y\to X$ such that $Y$ is
an object of ${\mathcal C}$, with surjective families of $E$-morphisms as covering.
The corresponding sheaves shall be called {\it sheaves
(over ${\mathcal C}$) for
the $E$-topology}.

If moreover $X$ is noetherian, we call $X_{quiet}$ the site whose
underlying category is the
full subcategory of $X$-schemes $Y\to X$ such that the structural
morphism is quiet of
finite type, with finite surjective families of quiet morphisms as
coverings.

\begin{remarks}: Let $k$ be a field of characteristic $p$ such that the
extension $k/\varphi(k)$ is finite.

 (1) If $k$ is not perfect, $k_{quiet}=k_{quiet}$ is {\it the smallest
Grothendieck topology able to deal with
all finite extensions of $k$}. More precisely:

i) any finite extension of $k$ is a quiet $k$-algebra,

ii) if $X\to k$ is a quiet morphism of finite type, there is a
surjective quiet morphism of
finite type $U\to X$, such that $U=\mbox{Spec } (k_1\otimes_k\otimes k_2\otimes_k
\ldots\otimes_{k}k_d)$ with $k_1,k_2,\ldots,k_d$ finite fields
extensions of $k$.

 (2) If $k$ is perfect, the functor which associates to any finite
commutative group scheme
over $k$ the sheaf it defines on $k_{quiet}$ induces an equivalence
of categories between the category of
finite and flat commutative group schemes over $k$ and the category of
abelian groups
over $k_{quiet}$ which are representable. This is due to the fact
that any finite
commutative group scheme over $k$ is quiet. Observe that we have a
similar statement for
fields of characteristic $0$ and the \'{e}tale topology.
\end{remarks}

\section{The rings $\mathcal O^{cris}_n$ and the $\varphi$-rings ${\mathcal G}_n$.}\label{sec-6}

We continue to call a ring object in a topos $\cT$ simply a ring in $\cT$, or a ring over $\cC$ with
respect to the topology $E$ if the topos given by the category $\cC$ and the topology $E$.

\subsection{\it The $p$-adic ring $\mathcal O^{cris} = (\cO_n)_n$.}\label{ssec-6.1}

\medskip
A {\it $p$-adic ring} $R$ in a fixed topos ${\mathcal T}$ consists of
giving, for each $n\in\mathbb N$ a
commutative ring $R_n$ in ${\mathcal T}$, together with an isomorphism
$R_{n+1}/p^n\simeq R_n$. A $p$-adic ring $R$ in $\cT$ is {\it flat} if $R_n$ is
flat over $\mathbb Z/p^n$, for all $n\in\mathbb N$, i.e., if the sequence
$$R_n\,\smash{\mathop{\hbox to 4mm{\rightarrowfill}}
\limits^{\scriptstyle p}}\,R_n\,\smash{\mathop{\hbox to 4mm{\rightarrowfill}}
\limits^{\scriptstyle p^{n-1}}}\,R_n$$
is exact.
In this case, for $m,n\in\mathbb N$, we have exact sequences
$$0\to R_n \to R_{n+m}\,\smash{\mathop{\hbox to 7mm{\rightarrowfill}}
\limits^{\scriptstyle p^n}}\, R_{n+m}\to R_n\to 0\,,
$$
and, in particular, exact sequences
$$
0 \to R_n \to R_{n+m} \to R_m \to 0\,.
$$
Let $\cT = (\cC,E)$ be a ringed topos with $\mathcal C$ as in section \ref{sec-5}, a topology $E$,
and the structural sheaf of rings ${\mathcal O}$, defined by $\cO(X) = \cO_X(X)$ for a scheme $X$ in $\mathcal C$.
For $n\in\mathbb N$, a $\mathbb Z/p^n$
{\it -divided power thickening of} ${\mathcal O}$ {\it (for the $E$-topology)} is a triple $({\mathcal G},\theta,\gamma)$
where  ${\mathcal G}$ is a $\mathbb Z/p^n$-ring on
${\mathcal C}_E$, $\theta:{\mathcal G}\to {\mathcal O}$ is an epimorphism of
rings and $\gamma$
is a divided power structure on the kernel of $\theta$ such that, for
any object $U$ of ${\mathcal
C}$, any $x\in {\mathcal G}(X)$ and any $m\in\mathbb N$, we have
$\gamma_m(px)=(p^m/m!)x^m$.

The $\mathbb Z/p^n$-divided power thickenings of ${\mathcal O}$ for the
$E$-topology form, in an
obvious way, a category. For $E=p$, this category has an initial
object that we call $\mathcal O^{cris}_n$.
This can be shown

- either by working on the crystalline site and showing that
$$X\mapsto \mathcal O^{cris}_n(X):=H^{0}((X\to\mbox{Spec } W_n({\mathcal
O}_{X_{per}}))_{crys},\hbox{structural sheaf})$$
is a solution of the universal problem,

- or by constructing $\mathcal O^{cris}_n$ directly as the syntomic sheaf associated to the presheaf
$$
X \mapsto W_n^{DP}(X)\,,
$$
the divided power envelope of the ring of Witt vectors of $X$ (see \cite{FM}).

\bigskip
Moreover, for any admissible class $E$,  $\mathcal O^{cris}_n$ is also a sheaf for
the $E$-topology. Therefore,
if $E$ is $p$-admissible,
$\mathcal O^{cris}_n$ is also an initial object of the category of $\mathbb Z/p^n\mathbb Z$-divided
power thickenings
of ${\mathcal O}$ for the  $E$-topology.

Under the same assumption on $E$, the natural morphism ${\mathcal
O}_{n+1}^{cris}/p^n\to\mathcal O^{cris}_n$ is an isomorphism and the  $p$-adic
Ring $\mathcal O^{cris}=(\mathcal O^{cris}_n)_{n\in\mathbb N}$ is flat \cite{FM}, i.e., we have
natural exact sequences for all $n$ and $m$
$$
0 \longrightarrow \cO^{cris}_n \longrightarrow \cO^{cris}_{n+m} \longrightarrow \cO^{cris}_m \longrightarrow 0\,.
$$
The Frobenius $\varphi: a\mapsto a^p$ is an endomorphism of the structural
Ring ${\mathcal O}$.
By functoriality, it induces an endomorphism of $\mathcal O^{cris}_n$, that we still denote
by $\varphi$. This is also an endomorphism of ${\mathcal O}^{cris}$, i.e.
the projection ${\mathcal O}_{n+1}^{cris}\to\mathcal O^{cris}_n$ commutes with $\varphi$.

If $A$ is a perfect (noetherian) ring of characteristic $p$, we have
$\mathcal O^{cris}_n(A)=W_n(A)$, with the usual Frobenius.

\subsection{\it The $\varphi$-Ring ${\mathcal G}$.}\label{ssec-6.2}

\medskip
For each $n\in\mathbb N$, we want to define a $\varphi$-ring ${\mathcal G}_n$,
such that the $\cG_n$ form a $p$-adic $\varphi$-ring $\cG = (\cG_n)_{n\in \n}$.

Morally, we get it
by the standard construction introduced in section \ref{ssec-2.2}, from the $p$-adic
ring $\cO^{cris} = (\cO^{cris}_n)$ and the Frobenius $\varphi: \cO^{cris} \rightarrow \cO^{cris}$ on it,
by defining
$$
\cG^r = \ker(\cO^{cris} \mathop{\longrightarrow}\limits^{\varphi} \cO^{cris} \longrightarrow \cO^{cris}/p^r) =
``\{\, x \in \cO^{cris}\,\mid\, \varphi(x) \in p^r\cO^{cris}\,\}\mbox{''}\,.
$$
This makes perfect sense in the setting of pro-objects, and then gives rise to the objects $\cG^r_n = \cG^r/{p^n}$
which are essentially constant pro-objects.

For a more elementary and direct approach we proceed as follows.
For all $n$ we set ${\mathcal G}_n^{0}=\mathcal O^{cris}_n$, and the sub-ring $\oplus_{r\leq 0}{\mathcal G}_n^r$
is the ring of polynomials in an indeterminate, called $v$, with coefficients in $\mathcal O^{cris}_n$ and $v$ is in degree $-1$.
In other words, for
  any object $U$ of ${\mathcal C}$ and any integer $r\leq 0$, ${\mathcal G}_n^r(U)$ is the free
  $\mathcal O^{cris}_n(U)$-module of rank one generated by $v^{-r}$.

  If $r \geq 0$ and $m\in \n$ with $m\geq r$, set
$$
\hat{\mathcal G}^r_m = \ker ({\mathcal O}_{m}^{cris}\,\smash{\mathop{\hbox to 4mm{\rightarrowfill}}
  \limits^{\scriptstyle \varphi}}\,{\mathcal O}_{m}^{cris}\,\smash{\mathop{\hbox to 4mm{\rightarrowfill}}
  \limits^{\scriptstyle proj}}\,{\mathcal O}^{cris}_m/p^r = {\mathcal O}_{r}^{cris})\,,
$$
so this is the sub-sheaf whose
  sections $x$ are such that $\varphi(x)$ is locally (for the
$p$-topology) divisible by
  $p^r$. For any $m\geq n+r$ we define
$$
{\mathcal G}_n^r = \hG^r_m/p^n\,.
$$
It is easily seen that this definition is independent of the
choice of $m \geq n+r$, and that this definition agrees with the previous definition
  ${\mathcal G}_n^0=\mathcal O^{cris}_n$ for $r=0$.

  If $U$ is an object of ${\mathcal C}$, if $m,r\in\mathbb N$ with $m\geq r$ and
if $x\in \hG^r_m(U)$, $y\in \hG^s_m(U)$, then $xy\in \hG^{r+s}_m(U)$. By going to the
  quotient this gives a map ${\mathcal G}_{n}^r\times {\mathcal G}_n^{s}\to
{\mathcal G}_{n}^{r+s}$
  which define the multiplication on the sub-ring $\oplus_{n\in\mathbb N}{\mathcal G}_n^{r}$.

  To complete the definition of the multiplication on ${\mathcal G}_n$, it
is enough to define
  the multiplication by $v:{\mathcal G}_{n}^r\to {\mathcal G}_n^{r-1}$ for
$r\geq 1$. It is induced by the inclusion $\hG^r_m \subset \hG^{r-1}_m$,
  via passing to the quotients modulo $p^n$.
  In this way ${\mathcal G}_n$ becomes a graded ring.

  To get the structure of $\varphi$-ring, we have to introduce $f,v$
  and $\varphi$.
   We have already defined the global section
$v\in{\mathcal G_n}^{-1}(\Bbb F_p)$. For all $m\in\mathbb N$, we have ${\mathcal
O}_m^{cris}(\Bbb F_p)=\mathbb Z/p^m\mathbb Z$. For $n\in\mathbb N$ and $m\geq n+1$, the
image of $p$ belongs to
${\mathcal G}_{m,1}(\Bbb F_p)$ and we call $f$ its image in ${\mathcal G}_n^1(\Bbb F_p)$. Observe that
$fv=p$.

The natural morphism $\mathcal O^{cris}_n={\mathcal G}_n^0\to {\mathcal G}_n^{-\infty}$ is an
isomorphism of rings
and we use it to identify ${\mathcal G}_n^{-\infty}$ with $\mathcal O^{cris}_n$.

Finally we define $\varphi$. Let $n,r\in\mathbb N$, let $U$ be an object of ${\mathcal C}$ and $x\in{\mathcal G}_n^{r}(U)$. If we
choose an integer $m\geq n+r$, we may find a covering $V$ of $U$, a
lifting $y$ of $x$ in
$\hG^r_m(V)$, a covering $W$ of $V$ and $z\in {\mathcal O}_m^{cris}(W)$ such that
$\varphi(y)=p^rz$. The image $\varphi_r(x)$ of $z$ in $\mathcal O^{cris}_n(W)$ is
independent of the
choices made and belongs to $\mathcal O^{cris}_n(U)$. In this way we have $\varphi_r: \cG^r_n \rightarrow \cO^{cris}_n$
such that ``\,$\varphi_r(x) = \varphi(x)/p^r$\,''. Obviously, we have
$\varphi_{r+1}(fx)=\varphi_r(x)$, therefore the morphisms $\varphi_r$ define a map
$$
\varphi:\lim\limits_{\stackrel{\textstyle \longrightarrow}{r\in\mathbb N,f}}{\mathcal G}_n^r={\mathcal G}_n^{\infty}\to
\mathcal O^{cris}_n={\mathcal G}_n^{-\infty}
$$
One sees that $\varphi$ is a morphism of rings; this corresponds to the equality
$$
``\,\varphi(x)/p^r\cdot\varphi(y)/p^s = \varphi(xy)/p^{r+s}\,\mbox{''}
$$
for sections $x\in \hG^r_m$ and $y \in \hG^s_m$ and the ring endomorphism $\varphi$ of $\cO^{cris}_m$.

\begin{theorem}\label{thm.phi.onD.iso}
For each $n\geq 1$, the map
$$\varphi:{\mathcal G}_n^{+\infty}\to {\mathcal G}_n^{-\infty}$$
is an isomorphism of rings.
\end{theorem}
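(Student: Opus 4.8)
The plan is to show that the ring homomorphism $\varphi\colon\cG_n^{+\infty}\to\cG_n^{-\infty}=\cO^{cris}_n$ constructed just above the theorem (it is already known to be a morphism of rings) is both injective and surjective as a morphism of sheaves. Throughout I work with the explicit model: identifying $\cG_n^r$ with $\hG_m^r/p^n$ for any $m\geq n+r$, the structure map $f\colon\cG_n^r\to\cG_n^{r+1}$ is multiplication by $p$ (it lifts to $p\in\hG_m^1$), the map $v$ is the inclusion $\hG_m^r\subset\hG_m^{r-1}$, and $\varphi_r$ sends the class of $y\in\hG_m^r$ to $\varphi(y)/p^r$. Flatness of the $p$-adic ring $\cO^{cris}$ is used freely, in the forms $\cO^{cris}_m/p^k\cO^{cris}_m\cong\cO^{cris}_k$ and $p^k\cO^{cris}_m=\ker(\cO^{cris}_m\to\cO^{cris}_k)$ for $m\geq k$, together with the compatibility of $\varphi$ with the projections $\cO^{cris}_{m+1}\to\cO^{cris}_m$.

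For injectivity, suppose $x\in\cG_n^r$ with $\varphi_r(x)=0$, and lift $x$ (locally) to $y\in\hG_m^r$, so $\varphi(y)=p^r z$ with $z\in\cO^{cris}_m$. Then $z\equiv 0\pmod{p^n}$, hence $z=p^n z'$ and $\varphi(y)=p^{r+n}z'$; after enlarging $m$ this says $y\in\hG_m^{r+n}$. Now $f^n x$, computed in $\cG_n^{r+n}=\hG_m^{r+n}/p^n\hG_m^{r+n}$, is the class of $p^n y$, which vanishes precisely because $y$ itself lies in $\hG_m^{r+n}$. Thus $x$ dies in the colimit $\cG_n^{+\infty}$, and $\varphi$ is injective.

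For surjectivity, which is a local statement for the $p$-topology, I reduce to an affine $\spec A$ with $A$ absolutely syntomic and — after refining in the $p$-topology — assume a given section lies in the divided power envelope $W_n^{DP}(A)$ of $W_n(A)$, of which $\cO^{cris}$ is the $p$-syntomic sheafification. Since $\im(\varphi)=\bigcup_r\im(\varphi_r)$ is a subring of $\cO^{cris}_n$, it is enough to reach a set of ring generators, and $W_n^{DP}(A)$ is generated over $\z/p^n$ by the Teichmüller lifts $[a]$, the elements $V\eta$ with $\eta\in W_{n-1}(A)$, and the divided powers $\gamma_i(V\eta)$. On $W_n$ the map $\varphi$ agrees with the Witt vector Frobenius $F$ (as $\mathrm{char}\,A=p$); from $FV^2=pV$ one gets, after lifting $\eta$ to $W_n(A)$ and reducing a computation in $\cO^{cris}_{n+1}$ modulo $p^n$, that $V\eta\in\im(\varphi_1)$. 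Next, the identity $V(a)V(b)=pV(ab)$ gives $(V\eta)^i=p^{i-1}V(\eta^i)$, so $\gamma_i(V\eta)$ equals a unit of $\z/p^n$ times $p^{c_i}V(\eta^i)$ with $c_i=v_p(p^{i-1}/i!)\geq0$, hence also lies in the image of some $\varphi_r$ by the previous case. Finally, a Teichmüller lift satisfies $\varphi([a^{1/p}])=[a]$; since only finitely many $a$ occur in a fixed section, adjoining a $p$-th root of each is a covering for the $p$-topology and puts the section into $\im(\varphi_0)$. Assembling these, every section of $\cO^{cris}_n$ is locally in the image.

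The case of the Teichmüller lifts is the main point: over a non-perfect base $[a]$ genuinely lies outside $\im(\varphi)$, and it is precisely the $p$-topology — in which extractions of $p$-th roots are coverings — that repairs this, so the theorem is really a statement about the chosen topology. The remaining difficulty is bookkeeping of truncation levels: the $V^2$-trick momentarily escapes level $n$, and one returns via flatness of $\cO^{cris}$ and the $\varphi$-compatibility of the projections; also the reduction allowing one to assume that the section comes from $W_n^{DP}(A)$ has to be spelled out through the two steps of sheafification.
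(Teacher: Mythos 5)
Your injectivity argument is correct and is in the same spirit as the paper's (the paper does it via the four-term exact sequences of Proposition~6.3.2 for $n=1$ only, after reducing to that case by flatness; you do it directly for all $n$ from the explicit model $\cG^r_n=\hG^r_m/p^n$). The surjectivity argument, however, has a genuine gap, and it is located exactly where the paper has to do its real work.

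The problematic step is the claim that $\gamma_i(V\eta)$ equals a unit of $\z/p^n$ times $p^{c_i}V(\eta^i)$. What you actually have from $V(a)V(b)=pV(ab)$ is the relation
$$
i!\,\gamma_i(V\eta)\;=\;(V\eta)^i\;=\;p^{\,i-1}V(\eta^i)\,,
$$
and from $v_p(i!)\leq i-1$ you can indeed form the integer $p^{c_i}=p^{\,i-1}/i!$ up to a unit. But for $i\geq p$ one has $p\mid i!$, and in a $\mathbb Z/p^n$-algebra one cannot cancel the common power of $p$ from both sides: the relation only pins down $\gamma_i(V\eta)$ modulo $p^{v_p(i!)}$-torsion. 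The divided powers $\gamma_{p^k}(\,\cdot\,)$ for $k\geq 1$ are genuinely new elements of a DP envelope (they are never expressible as $\mathbb Z/p^n$-multiples of ordinary powers), so the reduction of the generating set to elements of the form $V(\eta^i)$ breaks down precisely there. (The threshold $i<p$ is where $i!$ is a unit and your identity does determine $\gamma_i$; that is consistent with the paper's observation that $\gamma_m(t)=c_m t^r\gamma_q(\gamma_p(t))$ with $c_m$ a unit reduces all divided powers to iterates of $\gamma_p$, and it is $\gamma_p$ that carries the content.) Since you also appeal to the explicit presheaf description $A\mapsto W_n^{DP}(A)$ with a particular ideal and generating set, that too would need to be reconciled carefully with the paper's actual definition (an initial object among $\mathbb Z/p^n$-DP thickenings of $\cO$ for the $p$-topology, or equivalently the cohomology of a crystalline structure sheaf), but the divided-power step is the decisive obstruction.

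By contrast, the paper's route to surjectivity is to set up a generalized $F$-zip on $\cO^{cris}_1$, with $F_r=\operatorname{im}\varphi_r$ and $F^r=\operatorname{im} v^r$, prove the Cartier isomorphism $F^r/F^{r+1}\xrightarrow{\ \sim\ }F_r/F_{r-1}$ (Proposition~6.3.3), and then identify $F^r=J_1^{[r]}$ and $F_r=\tilde F_r$ by an induction driven by the explicit computation of $\cO^{cris}_1(A)\cong A\langle\theta_1,\dots,\theta_d\rangle$ for $A=\mathcal A/(t_1^p,\dots,t_d^p)$ with $\mathcal A$ smooth (Proposition~6.4.1 and Theorem~6.4.5). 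It is this explicit DP-polynomial description of the test algebras, combined with the Cartier isomorphism, that shows $\bigcap_r F^r=0$ and $\bigcup_r F_r=\cO^{cris}_1$; the divided powers $\gamma_m(\theta_i)$ are handled because the Cartier isomorphism precisely moves one DP-degree up with each application of $\varphi_r$. Your plan bypasses the Cartier isomorphism entirely, which should itself be a warning sign: that isomorphism is the key input, and the step you skip is the step where it is needed.
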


In other words, ${\mathcal G}_n$ is a perfect $\varphi$-ring.
The proof of this theorem is given in sections \ref{ssec-6.3}--\ref{ssec-6.5} below.

Here we note the following properties.

\begin{lemma}\label{lem.G.rigid.and.flat}
The following holds for the gauges $\cG_n$.

(i) One has natural isomorphisms $\cG_{n+1}/p^n \to \cG_n$ for all $n$, i.e., $(\cG_n)_n$
is a $p$-adic ring.

(ii) The $p$-adic ring $\cG = (\cG_n)_{n\in\n}$ is flat.

(ii) The gauge $\cG_1$ is rigid.
\end{lemma}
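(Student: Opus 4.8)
The plan is to reduce all three assertions to componentwise computations with the sheaves $\hG^r_m\subseteq\cO^{cris}_m$ and the flatness of the $p$-adic ring $\cO^{cris}$ recalled in section~\ref{ssec-6.1}; as in that section I work with local sections. The inputs I use repeatedly are the flatness relations $\ker(p^k\colon\cO^{cris}_m\to\cO^{cris}_m)=p^{m-k}\cO^{cris}_m$ and $\cO^{cris}_m/p^k\cong\cO^{cris}_k$ for $k\le m$, together with the elementary fact $\varphi(p)=p$, which gives $p^j\hG^r_m\subseteq\hG^{r+j}_m$ for all $j\ge 0$.

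Parts (i) and (ii) are then bookkeeping. In degrees $r\le 0$ one has $\cG^r_n\cong\cO^{cris}_n$, free of rank one on $v^{-r}$, so (i) follows from $\cO^{cris}_{n+1}/p^n\cong\cO^{cris}_n$ and (ii) from the flatness of $\cO^{cris}$. In degrees $r\ge 0$: since $\cG^r_j=\hG^r_m/p^j\hG^r_m$ does not depend on the choice of $m\ge j+r$, I fix one $m\ge n+1+r$ and get $\cG^r_{n+1}/p^n=\hG^r_m/p^n\hG^r_m=\cG^r_n$, compatibly with multiplication, $f$, $v$, and (as the maps $\varphi_r$ of the construction commute with the projections) with $\varphi$; this gives (i). For (ii) I check that $\cG^r_n\xrightarrow{p}\cG^r_n\xrightarrow{p^{n-1}}\cG^r_n$ is exact, where only $\ker(p^{n-1})\subseteq\im(p)$ requires an argument: if $x\in\hG^r_m$ has $p^{n-1}x\in p^n\hG^r_m$, write $p^{n-1}x=p^n t$; then $p^{n-1}(x-pt)=0$ forces $x-pt\in p^{m-n+1}\cO^{cris}_m$, say $x-pt=p^{m-n+1}s$, so $x=p(t+p^{m-n}s)$ with $p^{m-n}s\in\hG^r_m$ because $m-n\ge r$, and hence the class of $x$ lies in $p\cG^r_n$.

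The substance is (iii). Since $\cG_1$ is the reduction modulo $p$ of the standard-construction gauge of section~\ref{ssec-2.2} attached to $(\cO^{cris},\varphi)$, I would first make it explicit: choosing $m$ large (how large depends on the degree examined) and putting $L_r=\hG^r_m\subseteq\cO:=\cO^{cris}_m$ for $r\ge 0$ (these are ideals, they decrease, and $pL_r\subseteq L_{r+1}$), one has $\cG^r_1=L_r/pL_r$ for $r\ge 0$ and $\cG^r_1\cong\cO^{cris}_1$ for $r\le 0$; the map $v\colon\cG^{r+1}_1\to\cG^r_1$ is induced by the inclusion $L_{r+1}\subseteq L_r$ for $r\ge 0$ and is an isomorphism for $r\le 0$; and $f\colon\cG^r_1\to\cG^{r+1}_1$ is induced by multiplication by $p$ on the $L_r$ for $r\ge 0$ and is the zero map for $r\le -1$ (since $fv=p=0$ in the $\cO^{cris}_1$-gauge $\cG_1$). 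Finally $\cG^{-\infty}_1=\cO^{cris}_1$, the canonical map $v_r\colon\cG^r_1\to\cO^{cris}_1$ sends the class of $x\in L_r$ to $x\bmod p$, and $\varphi_r\colon\cG^r_1\to\cO^{cris}_1$ is ``$\varphi_r(\bar x)=\varphi(x)/p^r\bmod p$'' (well-defined by flatness).

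With this description, rigidity (Definition~\ref{def.rigid}) is two short verifications. For quasi-rigidity, i.e., exactness of $\cG^r_1\xrightarrow{f}\cG^{r+1}_1\xrightarrow{v}\cG^r_1\xrightarrow{f}\cG^{r+1}_1$: the degrees $r\le -1$ are trivial; for $r\ge 0$ one reads off $\im(f)=pL_r/pL_{r+1}=\ker(v)$ from $pL_r\subseteq L_{r+1}$ alone, whereas $\ker(f\colon\cG^r_1\to\cG^{r+1}_1)=\im(v\colon\cG^{r+1}_1\to\cG^r_1)$ uses $\ker(p\colon\cO\to\cO)=p^{m-1}\cO$ together with $p^{m-1}\cO\subseteq L_{r+1}$, valid once $m\ge r+2$. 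For strictness it suffices to show $\ker(v_r)\cap\ker(\varphi_r)=0$, because $\ker(f_r)\subseteq\ker(\varphi_r)=\ker(\varphi\circ f_r)$ --- so Theorem~\ref{thm.phi.onD.iso}, whose proof is deferred, is not needed; for $r\le 0$ this holds since $v_r$ is injective, and for $r\ge 0$, if $x\in L_r$ has $x\in p\cO$ and $\varphi(x)/p^r\in p\cO$, then $\varphi(x)\in p^{r+1}\cO$, so $x\in L_{r+1}\cap p\cO=pL_r$ by a flatness argument, whence the class of $x$ in $\cG^r_1$ vanishes. The main obstacle is pinning down the explicit picture of $\cG_1$ --- that $f$ and $v$ are exactly as claimed, and how large $m$ must be chosen in each degree --- and then noticing that strictness genuinely uses both $f_r$ and $v_r$ (neither is a monomorphism by itself); the several small flatness computations are otherwise routine.
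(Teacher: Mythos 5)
Your argument is correct and follows essentially the same componentwise route as the paper (computing inside $\hG^r_m$ and invoking the flatness of $\cO^{cris}$), but your treatment of strictness is cleaner than the paper's and worth contrasting. The paper, after establishing quasi-rigidity, shows only that $\ker(f\colon\cG^r_1\to\cG^{r+1}_1)\cap\ker(v\colon\cG^r_1\to\cG^{r-1}_1)=0$; Definition~\ref{def.rigid}(a) asks instead that $(f_r,v_r)\colon\cG^r_1\to\cG^{+\infty}_1\oplus\cG^{-\infty}_1$ be a monomorphism, and passing from the first statement to the second is a short extra step (if $f^jx=0$ with $j$ minimal and $j\ge 2$, then $y=f^{j-1}x$ lies in $\ker f\cap\ker v$ by $vf=p=0$, so $y=0$, forcing $j\le 1$) that the paper leaves implicit. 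Your route avoids this: since $\varphi_r=\varphi\circ f_r$, one has $\ker(f_r)\subseteq\ker(\varphi_r)$, so it suffices to show $\ker(v_r)\cap\ker(\varphi_r)=0$, which for $r\ge 0$ reduces to the flatness identity $L_{r+1}\cap p\cO=pL_r$ and for $r\le 0$ is trivial because $v_r$ is already injective. This proves the full statement of strictness directly, without relying on quasi-rigidity as an intermediate step, and without needing Theorem~\ref{thm.phi.onD.iso}. Parts (i) and (ii) match the paper's argument; for (ii) you verify only the exactness of $\cG^r_n\to^{p}\cG^r_n\to^{p^{n-1}}\cG^r_n$, which is what the paper's definition of a flat $p$-adic ring literally requires (the paper's proof happens to check the analogous statement for all $i\le n$, but your single case is what is stated and it bootstraps to the rest by the usual induction).
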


\begin{proof}
Morally, all statements are proved in the same way as for the standard construction in
section \ref{sec-2}, and could be proved by noting that $\cO^{cris}$ is a flat (= ``torsion-free'')
$p$-adic ring. But for concreteness we give a proof by the above explicit construction.
We argue with local sections, i.e., all statements hold after possible passing to some cover.
Moreover, we constantly use that the flatness of $\cO^{cris}$ implies the exactness of the sequence
$$
\cO^{cris}_m \mathop{\longrightarrow}\limits^{p^{m-i}} \cO^{cris}_m \mathop{\longrightarrow}\limits^{p^i} \cO^{cris}_m
$$
for all $i\leq m$. Recall that $\cG^r_n = \hG^r_m/p^n$ for some fixed $m >> 0$ ($m \geq n+r$ suffices),
where $\hG^r_m = \{x\in \cO^{cris}_m\,\mid\, \varphi(x) \in p^r\cO^{cris}_m\}$.
Unless specified explicitly, we calculate inside $\cO^{cris}_m$.

\smallskip\noindent
(i):  This is trivial: $(\hG^r_m/p^{n+1})/p^n = \hG^r_m/p^n$.

\smallskip\noindent
(ii): For each $i\leq n$ and $m\geq n+r$ we show the exactness of
$$
\hG^r_m/p^n \mathop{\longrightarrow}\limits^{p^{n-i}}\hG^r_m/p^n \mathop{\longrightarrow}\limits^{p^i} \hG^r_m/p^n
$$
as follows: If $x \in \hG^r_m$ and $p^ix = p^ny$ with $y \in \hG^r_m$, then we have $p^i(x-p^{n-i}y)=0$
and hence $x-p^{n-i}y = p^{m-i}z$ with $z\in \cO^{cris}_m$ by flatness of $\cO^{cris}$.
Thus $x = p^{n-i}(y + p^{m-n}z)$ with $y, p^{m-n}z \in \hG^r_m$ (note that $m-n \geq r$).

\smallskip\noindent
(iii): Recall that $v$ and $f$ are induced by the inclusion $\hG^r_m \hookrightarrow \hG^{r-1}_m$
and the $p$-multiplication $\hG^r_m \mathop{\rightarrow}\limits^{p} \hG^r_m$, respectively.
First we show quasi-rigidity. If $x \in \hG^r_m$ and $x = py$ for $y \in \hG^{r-1}_m$, then $[x] = f[y]$ for the class
$[x]$ of $x$ in $\cG^r_1$ and the class $[y]$ of $y$ in $\cG^{r-1}_1$. On the other hand, if $x\in \hG^r_m$
with $px = pz$ for $z\in \hG^{r+1}_m$, then $p(x-z)=0$ and hence $x-z = p^{m-1}t$ with $t\in \cO^{cris}_m$.
Thus $x = z + p^{m-1}t \in \hG^{r+1}_1$  for $m\geq r+2$, i.e., $[x] = v[x]$. Now we show strictness.
Let $x \in \hG^r_m$ with $f[x] = 0 = v[x]$. This means that $x = py$ with $y\in \hG^{r-1}_m$ and $px = pz$
with $z\in\hG^{r+1}_m$. This implies $pz = px = p^2y$, hence $p^2\varphi(y) = p\varphi(z) = p^{r+2}t$
for some $t\in \cO^{cris}_m$. This implies $p^2(\varphi(y) - p^rt)= 0$ and thus $\varphi(y) - p^rt= p^{m-2}u$
for some $u\in \cO^{cris}_m$ by flatness of $\cO^{cris}$. We conclude $y\in \hG^r_m$ for $m\geq r+2$ and therefore
$[x] = [py] = 0$.
\end{proof}

\bigskip
\subsection{\it The structure of a (generalized) $F$-zip on $\mathcal O^{cris}_1$.}\label{ssec-6.3}

\medskip
Let ${\mathcal T} = (\mathcal C,E,\cO)$ be a ringed topos with the category $\cC$, an admissible topology $E$,
and the structure ring ${\mathcal O}$ as in section \ref{ssec-6.1}.

An effective generalized $F$-zip (over ${\mathcal O}$) is an
${\mathcal O}$-module $\mathcal M$ together with

-- a decreasing filtration $(F^r\mathcal M)_{r\in\mathbb N}$ by sub-${\mathcal
O}$-modules indexed by $\mathbb N$ such
that $F^0\mathcal M=\mathcal M$ and
$\cap_{r\in\mathbb N}F^r\mathcal M=0$,

-- an increasing filtration $(F_{r}\mathcal M)_{r\geq -1}$ by sub-${\mathcal
O}$-modules indexed by the
natural
integers $r\geq -1$ such that
$F_{-1}\mathcal M=0$
and $\cup_{r\in\mathbb N}F_r\mathcal M=\mathcal M$,

-- for all $r\in\mathbb N$, an isomorphism
$$
\varphi_r:F^{r}\mathcal M/F^{r+1}\mathcal M \mathop{\longrightarrow}\limits^{\sim} F_r\mathcal M/F_{r-1}\mathcal M
$$
of abelian sheaves which is semi-linear with respect to the absolute Frobenius.

\begin{remark}\label{rem.sigma-iso} Since the absolute Frobenius $Fr$ is an epimorphism on $\cO$,
one easily sees that this is equivalent to the fact that the associated morphism
$$
(F^{r}\mathcal M/F^{r+1}\mathcal M)^{(p)} = \cO\,{}_{Fr}\hspace{-1mm}\otimes_\cO (F^{r}\mathcal M/F^{r+1}\mathcal M) \longrightarrow F_r\mathcal M/F_{r-1}\mathcal M
$$
is an isomorphism. This ties this definition with the definition in section \ref{ssec-3.2}.
\end{remark}

\bigskip
We shall now define such a structure on $\mathcal O^{cris}_1$ (with ${\mathcal O}$ the
structural sheaf). We define $F_r = F_r\cO^{cris}_1 = im(\varphi_r)$ for $\varphi_r: \cG^r_1 \rightarrow \cO^{cris}_1$
and $F^r = F^r\cO^{cris}_1 = im(v^r)$ for $v^r: \cG^r_1 \rightarrow \cO^{cris}_1$.

\begin{proposition}\label{prop.4-term.sequences}
There are canonical exact sequences for all $r\geq 1$
$$
0 \longrightarrow F_r \longrightarrow \cG^{r+1}_1 \mathop{\longrightarrow}\limits^{v} \cG^r_1 \mathop{\longrightarrow}\limits^{\varphi_r} F_r \longrightarrow 0
$$
$$
0 \longrightarrow F^r \longrightarrow \cG^{r-1}_1 \mathop{\longrightarrow}\limits^{f} \cG^r_1 \mathop{\longrightarrow}\limits^{v^r} F^r
\longrightarrow 0\,.
$$
\end{proposition}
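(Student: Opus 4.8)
The plan is to obtain each of the two four-term sequences by splicing the short exact sequence attached to the relevant defining epimorphism (namely $\varphi_r$ for the first sequence and $v^r$ for the second) with the structural information coming from the rigidity of $\cG_1$ established in Lemma~\ref{lem.G.rigid.and.flat}. The only genuinely computational input is the identification of the two inner kernels; for this one lifts local sections to the sheaves $\hG^r_m$ and uses that $\cO^{cris}$ is a flat $p$-adic ring, i.e.\ that $\cO^{cris}_m\mathop{\longrightarrow}\limits^{p^j}\cO^{cris}_m\mathop{\longrightarrow}\limits^{p^{m-j}}\cO^{cris}_m$ is exact for $j\le m$, together with the inclusions $p\hG^r_m\subseteq\hG^{r+1}_m$ and $p\hG^{r-1}_m\subseteq\hG^r_m$ built into the construction of $f$.

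First I would treat the sequence with $\varphi_r$. Surjectivity of $\varphi_r\colon\cG^r_1\to F_r$ is just the definition $F_r=\im(\varphi_r)$, so the heart of the matter is the equality
$$
\ker\bigl(\varphi_r\colon\cG^r_1\to\cO^{cris}_1\bigr)=\im\bigl(v\colon\cG^{r+1}_1\to\cG^r_1\bigr)\,.
$$
Fixing $m$ large and lifting a local section of $\cG^r_1$ to $x\in\hG^r_m$, one has $\varphi(x)=p^rz$ for some local section $z$ of $\cO^{cris}_m$, and $\varphi_r([x])$ is the class of $z$ in $\cO^{cris}_m/p$; flatness of $\cO^{cris}$ makes this class well defined, and it vanishes exactly when $z\in p\cO^{cris}_m$, i.e.\ when $\varphi(x)\in p^{r+1}\cO^{cris}_m$, i.e.\ when $x\in\hG^{r+1}_m$. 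Since the set of classes in $\cG^r_1$ of elements of $\hG^{r+1}_m$ is precisely $\im(v)$, the displayed equality follows. Now rigidity of $\cG_1$ gives $\ker(f\colon\cG^r_1\to\cG^{r+1}_1)=\im(v\colon\cG^{r+1}_1\to\cG^r_1)$ and $\im(f\colon\cG^r_1\to\cG^{r+1}_1)=\ker(v\colon\cG^{r+1}_1\to\cG^r_1)$. Hence $f$ and $\varphi_r$ have the same kernel on $\cG^r_1$, so $f$ induces an isomorphism $F_r=\cG^r_1/\ker(\varphi_r)\xrightarrow{\sim}\im(f)=\ker(v\colon\cG^{r+1}_1\to\cG^r_1)$; followed by the inclusion into $\cG^{r+1}_1$ this is the canonical monomorphism $F_r\hookrightarrow\cG^{r+1}_1$ of the statement, and its image is $\ker(v)$. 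Combined with the displayed equality and the surjectivity of $\varphi_r$, exactness of the four-term sequence at all three inner spots follows.

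The sequence with $v^r$ is handled symmetrically, $v^r$ now playing the role of $\varphi_r$ and $f$ that of $v$. Surjectivity of $v^r\colon\cG^r_1\to F^r$ is the definition $F^r=\im(v^r)$, and the computational step is
$$
\ker\bigl(v^r\colon\cG^r_1\to\cO^{cris}_1\bigr)=\im\bigl(f\colon\cG^{r-1}_1\to\cG^r_1\bigr)\,.
$$
Lifting a local section to $x\in\hG^r_m$, the map $v^r$ sends it to the class of $x$ in $\cO^{cris}_m/p$, which vanishes iff $x=py$ with $y$ a local section of $\cO^{cris}_m$; from $\varphi(x)=p\,\varphi(y)\in p^r\cO^{cris}_m$ and flatness of $\cO^{cris}$ one deduces $\varphi(y)\in p^{r-1}\cO^{cris}_m$, i.e.\ $y\in\hG^{r-1}_m$ and $[x]=f([y])$, giving $\ker(v^r)\subseteq\im(f)$; the reverse inclusion is $v^r\circ f=0$, valid because $vf=p=0$ on $\cG_1$. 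Combining with rigidity, $\ker(v^r)=\im(f\colon\cG^{r-1}_1\to\cG^r_1)=\ker(v\colon\cG^r_1\to\cG^{r-1}_1)$, so $v$ induces an isomorphism $F^r=\cG^r_1/\ker(v^r)\xrightarrow{\sim}\im(v\colon\cG^r_1\to\cG^{r-1}_1)=\ker(f\colon\cG^{r-1}_1\to\cG^r_1)$; followed by the inclusion into $\cG^{r-1}_1$ this is the asserted monomorphism $F^r\hookrightarrow\cG^{r-1}_1$, and exactness follows exactly as before.

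The main obstacle is purely the bookkeeping in the two kernel computations: one must make sure that every division by a power of $p$ is unambiguous and that each ``iff'' goes through, which in each case reduces to the absence of $p$-torsion in $\cO^{cris}$ for a suitably large $m$. Since $\cG^r_1$, $F_r$, $F^r$ and the maps $f,v,\varphi_r,v^r$ are already known to be independent of the choice of $m$, no compatibility in $m$ has to be checked.
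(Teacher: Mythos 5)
Your proof is correct, and the constructed maps $F_r\hookrightarrow\cG^{r+1}_1$ and $F^r\hookrightarrow\cG^{r-1}_1$ do coincide with the canonical ones (both amount to: lift a section of $F_r$ resp.\ $F^r$ along $\varphi_r$ resp.\ $v^r$, then push forward by $f$ resp.\ $v$). However, the route is genuinely different from the paper's. The paper works entirely at the level of the sheaves $\hG^r_m$: it establishes the two short exact sequences $0\to\hG^{r+1}_{r+2}\to\hG^r_{r+2}\to F_r\to 0$ and $0\to p\hG^{r-1}_{r+1}\to\hG^r_{r+1}\to F^r\to 0$ (this requires the same flatness computation of inner kernels that you carry out), then applies the snake lemma to multiplication by $p$, which in a single stroke produces the whole four-term sequence; the only extra work is showing that the leading arrow out of the $p$-torsion subsheaf vanishes (again a small flatness argument), plus, for the second sequence, identifying $(p\hG^{r-1}_{r+1})/p$ with $\cG^{r-1}_1$. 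You instead compute the inner kernels directly inside $\cG_1$ and then delegate the exactness at $\cG^{r+1}_1$ (resp.\ $\cG^{r-1}_1$) to the quasi-rigidity $\ker f=\im v$, $\ker v=\im f$ from Lemma~\ref{lem.G.rigid.and.flat}(iii), which fortunately is established independently just before, so there is no circularity. Your approach is structurally cleaner in that it uses the rigidity already at hand rather than redoing it implicitly via the snake-lemma argument, while the paper's version is more self-contained and would yield rigidity as a by-product; in content the two proofs are equivalent, since both rest on the same flatness lemma for $\cO^{cris}$, namely the exactness of $\cO^{cris}_m\to^{p^j}\cO^{cris}_m\to^{p^{m-j}}\cO^{cris}_m$.
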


\begin{proof}
For the first sequence we claim that there is an exact sequence
$$
0 \rightarrow \hG^{r+1}_{r+2} \hookrightarrow \hG^r_{r+2} \mathop{\rightarrow}\limits^{\varphi_r} \cO^{cris}_1\,,
$$
where the morphism on the right is the composition $\hG^r_{r+2} \twoheadrightarrow \cG^r_1 \mathop{\rightarrow}\limits^{\varphi_r} \cO^{cris}_1$.
In fact, if $x \in \hG^r_{r+2}$, so that $\varphi(x) = p^ry$ with $y\in \cO^{cris}_{r+2}$, then
$\varphi_r([x]) = [y] \in \cO^{cris}_{r+2}/p= \cO^{cris}_1$ for the corresponding classes modulo $p$.
If $\varphi_r([x])=0$, then $y = py'$ with $y' \in \cO^{cris}_{r+2}$. This implies $\varphi(x) = p^{r+1}y'$
and hence $x \in \hG^{r+1}_{r+2}$. Applying now the snake lemma to the multiplication by $p$ on the
exact sequence
$$
0 \rightarrow \hG^{r+1}_{r+2} \hookrightarrow \hG^r_{r+2} \mathop{\rightarrow}\limits^{\varphi_r} F_r \rightarrow 0\,,
$$
we get an exact sequence
$$
\hG^r_{r+2}[p] \rightarrow  F_r \rightarrow \cG^{r+1}_1 \hookrightarrow \cG^r_1 \mathop{\rightarrow}\limits^{\varphi_r} F_r \rightarrow 0\,,
$$
where $A[p]$ means the $p$-torsion subsheaf of a sheaf $A$. But the map on the left is zero:
If $x\in \hG^r_{r+2}$ with $px=0$, then $x = p^{r+1}u$ for some $u \in \cO^{cris}_{r+2}$ by flatness of $\cO^{cris}$.
Then $\varphi(x) = p^{r+1}\varphi(u)$, and by definition, $\varphi_r([x]) = [p\varphi(u)] = 0 \in \cO^{cris}_1$.

\smallskip
For the second sequence we claim that we have an exact sequence
$$
p\hG^{r-1}_{r+1} \hookrightarrow \hG^r_{r+1} \mathop{\rightarrow}\limits^{v^r} \cO^{cris}_1\,,
$$
where the morphism on the right is the composition $\hG^r_{r+1} \twoheadrightarrow \cG^r_1 \mathop{\rightarrow}\limits^{v^r} \cO^{cris}_1$.
In fact, if $x\in \hG^r_{r+1}$, so that $\varphi(x) = p^ry$ with $y\in \cO^{cris}_{r+1}$, and $x = pz$ with $z \in \cO^{cris}_{r+1}$,
then $p^ry = \varphi(x) = p\varphi(z)$, hence $p(\varphi(z)-p^{r-1}y)=0$, so that $\varphi(z)-p^{r-1}y = p^rt$
with $t\in \cO^{cris}_{r+1}$. This implies $z \in \hG^{r-1}_{r+1}$, hence the claim, because $x = pz$.
Applying the snake lemma to the multiplication by $p$ to the exact sequence
$$
0 \rightarrow p\hG^{r-1}_{r+1} \rightarrow \hG^r_{r+1} \mathop{\rightarrow}\limits^{v^r} F^r \rightarrow 0\,,
$$
we get an exact sequence
$$
\hG^r_{r+1}[p] \rightarrow F^r \rightarrow  (p\hG^{r-1}_{r+1})/p \rightarrow \cG^r_1 \mathop{\rightarrow}\limits^{v^r} F^r \rightarrow 0\,.
$$

Now we claim that the first map is the zero map. In fact, if $x \in \hG^r_{r+1}$ with $px=0$, then $x = p^ry$ with $y\in \cO^{cris}_{r+1}$ by
flatness of $\cO^{cris}$. Hence $x$ is mapped to zero in $\cO^{cris}_1$.

Moreover we claim that the exact sequence
$$
0 \rightarrow \hG^{r-1}_{r+1}[p] \rightarrow \hG^{r-1}_{r+1} \mathop{\rightarrow}\limits^{p} p\hG^{r-1}_{r+1} \rightarrow 0
$$
taken modulo $p$ induces an isomorphism
$$
\cG^{r-1}_1 = \hG^{r-1}_{r+1}/p \mathop{\rightarrow}\limits^{\sim} (p\hG^{r-1}_{r+1})/p\,.
$$
In fact, for this it suffices to show that the induced morphism $(\hG^{r-1}_{r+1}[p])/p \rightarrow (\hG^{r-1}_{r+1})/p$
is zero. But if $x \in \hG^{r-1}_{r+1}[p]$, then $px=0$, so that $x = p^rt$ with $t\in \cO^{cris}_{r+1}$ by
flatness of $\cO^{cris}$. Then $x = py$ with $y=p^{r-1}t \in \cG^{r-1}_{r+1}$ as claimed.

Both claims together imply the second exact sequence in the proposition.
\end{proof}

\begin{proposition}\label{prop.cartier.iso.2}(Cartier isomorphism)
The subsheaves $F_r= F_r\cO^{cris}_1$ form an increasing filtration of $\cO^{cris}_1$ (i.e., $F_r \subseteq F_{r+1}$),
and the subsheaves $F^r = F^r\cO^{cris}_1$ form a decreasing filtration of $\cO^{cris}_1$ (i.e., $F^{r+1} \subseteq F^r$).
For each $r\geq 0$, the morphism $\varphi_r: F^r = \cG^r_1 \rightarrow im(\varphi_r)=F_r$ induces an isomorphism
$$
\overline{\varphi}_r: \,F^r/F^{r+1} \mathop{\longrightarrow}\limits^{\sim} F_r/F_{r-1}\,.
$$
\end{proposition}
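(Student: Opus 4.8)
The plan is to deduce the proposition formally from the two four--term exact sequences of Proposition~\ref{prop.4-term.sequences}, using in addition the two compatibilities built into the construction of $\cG$ in section~\ref{ssec-6.2}: the identity $\varphi_{r+1}\circ f=\varphi_r$, and the fact that the canonical map $v^r\colon\cG^r_1\to\cG^0_1=\cO^{cris}_1$ is the $r$--fold iterate of the degree $-1$ map $v$ (so $v^{r+1}=v^r\circ v$, with $v^0=\mathrm{id}$). First, the filtration statements are immediate: $F_r=\varphi_r(\cG^r_1)=\varphi_{r+1}(f\,\cG^r_1)\subseteq\im(\varphi_{r+1})=F_{r+1}$, and $F^{r+1}=v^{r+1}(\cG^{r+1}_1)=v^r(v\,\cG^{r+1}_1)\subseteq v^r(\cG^r_1)=F^r$. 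The same compatibilities also give the three facts I will use below: $F_{r-1}=\varphi_r(f\,\cG^{r-1}_1)$ (apply $\varphi_r\circ f=\varphi_{r-1}$), $F^{r+1}=v^r(v\,\cG^{r+1}_1)$ (as above), and $\varphi_r(v\,\cG^{r+1}_1)=0$, the last because $\varphi_r(v\,z)=\varphi_{r+1}(f\,v\,z)=\varphi_{r+1}(p\,z)=0$ for $z\in\cG^{r+1}_1$ (since $p=0$ in $\cG_1$).

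I would then read the two exact sequences of Proposition~\ref{prop.4-term.sequences} as the isomorphisms they encode: $\varphi_r$ induces $\cG^r_1/v\,\cG^{r+1}_1\xrightarrow{\sim}F_r$, so that $\ker(\varphi_r\colon\cG^r_1\twoheadrightarrow F_r)=v\,\cG^{r+1}_1$; and $v^r$ induces $\cG^r_1/f\,\cG^{r-1}_1\xrightarrow{\sim}F^r$, so that $\ker(v^r\colon\cG^r_1\twoheadrightarrow F^r)=f\,\cG^{r-1}_1$. Now look at the composite $\psi\colon\cG^r_1\xrightarrow{\varphi_r}F_r\twoheadrightarrow F_r/F_{r-1}$. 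Because $F_{r-1}=\varphi_r(f\,\cG^{r-1}_1)$, the map $\psi$ annihilates $f\,\cG^{r-1}_1$, hence factors through $\cG^r_1/f\,\cG^{r-1}_1\cong F^r$ as a surjection $F^r\twoheadrightarrow F_r/F_{r-1}$; and since $\varphi_r$ also annihilates $v\,\cG^{r+1}_1$ while $v^r(v\,\cG^{r+1}_1)=F^{r+1}$, this surjection annihilates $F^{r+1}$ and so descends to $\overline\varphi_r\colon F^r/F^{r+1}\to F_r/F_{r-1}$. This $\overline\varphi_r$ is the map ``induced by $\varphi_r$'' of the statement.

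It remains to check $\overline\varphi_r$ is an isomorphism, which I would do by comparing the kernels of the two surjections out of $\cG^r_1$. On one hand $\psi$ is surjective with $\ker\psi=\varphi_r^{-1}(F_{r-1})=f\,\cG^{r-1}_1+\ker(\varphi_r)=f\,\cG^{r-1}_1+v\,\cG^{r+1}_1$; on the other hand the surjection $\cG^r_1\twoheadrightarrow F^r\twoheadrightarrow F^r/F^{r+1}$ has kernel $(v^r)^{-1}(F^{r+1})=v\,\cG^{r+1}_1+\ker(v^r)=v\,\cG^{r+1}_1+f\,\cG^{r-1}_1$. These two kernels coincide, so both surjections exhibit $\cG^r_1/(f\,\cG^{r-1}_1+v\,\cG^{r+1}_1)$ as isomorphic to $F^r/F^{r+1}$ and to $F_r/F_{r-1}$ respectively, and $\overline\varphi_r$ is the resulting isomorphism $F^r/F^{r+1}\xrightarrow{\sim}F_r/F_{r-1}$. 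The case $r=0$ is handled by the same recipe, with the conventions above, using only that $\varphi_0$ is the absolute Frobenius of $\cO^{cris}_1$, which is an epimorphism with kernel $v\,\cG^1_1$.

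Since the real work has already been done in Proposition~\ref{prop.4-term.sequences} (that is where flatness of $\cO^{cris}$ and the snake lemma enter), the argument above is purely diagrammatic. The one subtlety to be careful about — and the only place where something could go wrong — is that $\varphi_r$ does \emph{not} vanish on $f\,\cG^{r-1}_1$ but maps it onto $F_{r-1}$; hence $\overline\varphi_r$ is obtained from $\varphi_r$ only after passing to the quotient $F_r/F_{r-1}$ on the target, and what makes it an isomorphism is the complementary pair of vanishings $v^r(f\,\cG^{r-1}_1)=0$ and $\varphi_r(v\,\cG^{r+1}_1)=0$, which force the two surjections $\cG^r_1\twoheadrightarrow F^r/F^{r+1}$ and $\cG^r_1\twoheadrightarrow F_r/F_{r-1}$ to have the identical kernel $f\,\cG^{r-1}_1+v\,\cG^{r+1}_1$.
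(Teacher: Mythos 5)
Your proof is correct and follows essentially the same route as the paper's: both are purely formal consequences of the two four-term exact sequences in Proposition~\ref{prop.4-term.sequences}, combined with the compatibilities $\varphi_{r+1}\circ f=\varphi_r$ and $vf=0$ in $\cG_1$. The only difference is organizational --- you show symmetrically that both surjections out of $\cG^r_1$ have the common kernel $f\,\cG^{r-1}_1+v\,\cG^{r+1}_1$, whereas the paper defines $\varphi'_r$ on $F^r$ by a diagram chase and then directly computes $\ker(\varphi'_r)=F^{r+1}$; these are the same argument, yours perhaps presented slightly more cleanly.
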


\begin{proof}
The first two claims are clear. For the third look at the diagram with exact rows
$$
\begin{matrix}
\cG^{r+1}_1 & \mathop{\longrightarrow}\limits^{v} & \cG^r_1 & \mathop{\longrightarrow}\limits^{\varphi_r} & F_r & \longrightarrow & 0 \\
            &                                     &    \|   &                                             &     &                 &   \\
\cG^{r-1}_1 & \mathop{\longrightarrow}\limits^{f} & \cG^r_1 &  \mathop{\longrightarrow}\limits^{v^r}      & F^r & \longrightarrow & 0
\end{matrix}\,.
$$
Since $\varphi_rf=\varphi_{r-1}$ the diagram induces an epimorphism
$$
\varphi'_r: \,F^r \longrightarrow F_r/F_{r-1}\,.
$$
Explicitly: If $x \in F^r$, and $x=v^ry$ for $y\in \cG^r_1$, let $\varphi'_r(x)$ be the class of $\varphi_r(y)$,
which is well-defined modulo $F_{r^-1}$, because the kernel of $v^r$ is the image of $f$.
The kernel of this morphism is $F^{r+1}$: If $\varphi'_r(x) = \varphi_r(y) = \varphi_{r-1}(z) = \varphi_r(fz)$,
then $\varphi_r(y-fz)=0$, so that $y-fz = vt$ with $t\in \cG^{r+1}_1$ by the exactness of the upper row.
But then $x = v^ry = v^rfz + v^{r+1}t = v^{r+1}t\in F^{r+1}$, because $vf=0$.
\end{proof}

From Proposition \ref{prop.4-term.sequences} we already obtain one half of Theorem \ref{thm.phi.onD.iso}.

\begin{corollary}\label{cor.varphi.injective}
The morphism $\varphi: \cG^{+\infty} \longrightarrow \cG^{-\infty}$ is injective.
\end{corollary}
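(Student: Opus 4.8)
The plan is to read off injectivity directly from the first four-term exact sequence of Proposition~\ref{prop.4-term.sequences} (applied with $n=1$, so throughout $\cG^{\pm\infty}$ stands for $\cG_1^{\pm\infty}$, and $\cG_1^{-\infty}$ is identified with $\cO^{cris}_1$). Recall from section~\ref{ssec-6.2} that $\varphi:\cG_1^{+\infty}\to\cG_1^{-\infty}$ is by construction the morphism induced on the filtered colimit $\cG_1^{+\infty}=\varinjlim_r \cG_1^r$ (transition maps $=$ multiplication by $f$) by the compatible family $\varphi_r:\cG_1^r\to\cO^{cris}_1$; thus the composite of the canonical map $\cG_1^r\to\cG_1^{+\infty}$ with $\varphi$ is exactly $\varphi_r$. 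Since injectivity of a morphism of sheaves is local, it is enough to show: if $x$ is a local section of some $\cG_1^r$ with $\varphi_r(x)=0$, then the image of $x$ in $\cG_1^{+\infty}$ is $0$.

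First I would reduce to $r\geq 1$: replacing $x$ by $f^Nx$ for a suitable $N\geq 0$ changes neither the image of $x$ in $\cG_1^{+\infty}$ nor (by the relation $\varphi_{s+1}(fz)=\varphi_s(z)$) the vanishing $\varphi_r(x)=0$. With $r\geq 1$, the first sequence of Proposition~\ref{prop.4-term.sequences} identifies $\ker(\varphi_r:\cG_1^r\to F_r)$ with $\im(v:\cG_1^{r+1}\to\cG_1^r)$; hence, after refining the cover, $x=v(y)$ for some section $y$ of $\cG_1^{r+1}$. Now the image of $x\in\cG_1^r$ in the colimit $\cG_1^{+\infty}$ equals the image of $fx\in\cG_1^{r+1}$, and $fx=f(v(y))=(fv)(y)=0$ because $fv=p=0$ on $\cG_1$ (which is annihilated by $p$). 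So the image of $x$ in $\cG_1^{+\infty}$ vanishes, proving the corollary.

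I do not expect any real obstacle here; the argument is short. The only points deserving care are purely formal: that $\varphi$ being the colimit of the $\varphi_r$ makes ``$\varphi(\xi)=0$'' equivalent to ``$\varphi_r(x)=0$'' for a local lift $x$ of $\xi$, on the very open set where $x$ is defined; and that the identity $x=v(y)$, being an instance of the exactness of the four-term sequence as a sequence of sheaves, holds only locally — so that the conclusion $\xi=0$ is itself a local statement, which is exactly the form in which injectivity of $\varphi$ is to be verified.
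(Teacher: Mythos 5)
Your argument is essentially the paper's own: pick a local representative $x_r\in\cG_1^r$ of the class in $\cG_1^{+\infty}$, use the first four-term sequence of Proposition~\ref{prop.4-term.sequences} to write $x_r=v(y)$, and conclude $fx_r=fvy=py=0$ so the class vanishes in the colimit. The only differences are presentational: you make explicit the harmless reduction to $r\geq 1$ (needed since Proposition~\ref{prop.4-term.sequences} is stated for $r\geq 1$) and the sheaf-theoretic locality, both of which the paper leaves implicit.
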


\begin{proof}
Let $x\in \cG^{+\infty}$ with $\varphi(x)=0$. Suppose $x$ is the image of an element $x_r$ under
$f_r: \cG^r_1 \rightarrow \cG^{+\infty}$. Then $\varphi_r(x_r)=0$. By the first sequence in
\ref{prop.4-term.sequences}, we have $x_r=vx_{r+1}$. But then $fx_r = fvx_{r+1}=px_{r+1}= 0$,
which implies $x=f_rx_r = 0$.
\end{proof}

To obtain the fact that $\cup_r F_r = \cO^{cris}$
(which gives the surjectivity of $\varphi$ above and hence the second half of Theorem \ref{thm.phi.onD.iso}),
and that $\cap_r F^r = 0$, we need some explicit calculations.

\subsection{\it Some calculations for the universal $F$-zip}\label{ssec.6-4}

\bigskip
We first observe that we have a morphism of rings
$$
f : {\mathcal O}\to \mathcal O^{cris}_1\,.
$$
In fact, if $X$ is an object of ${\mathcal C}$ and if $a\in{\mathcal
O}(X)$, one may find a quiet covering $Y\to X$ such that $a$ is the
image of some  $b\in\mathcal O^{cris}_1(Y)$ and $f(a)=b^{p}$ is independent of the
choice of $b$ and belongs to
$\mathcal O^{cris}_1(X)$.

The kernel $\tilde{F}^1\mathcal O^{cris}_1$ (often called $J_1^{[1]}$) of the canonical
epimorphism ${\mathcal
O}_1^{cris}\to {\mathcal O}$ is a dived power Ideal and, for all
$r\in\mathbb N$, we set $\tilde{F}^r{\mathcal O}_1^{cris}=J_1^{[r]}$, the $r$-th
divided power of $\tilde{F}^1\mathcal O^{cris}_1=J_1^{[1]}$. We set
$I=J_1^{[1]}/J_1^{[2]}$
and call it {\it the cotangent space}. We denote
$G^r\mathcal O^{cris}_1$ the abelian sheaf
$\tilde{F}^r\mathcal O^{cris}_1/\tilde{F}^{r+1}\mathcal O^{cris}_1=J_1^{[r]}/J_1^{[r+1]}$ (hence $G^0\mathcal O^{cris}_1={\mathcal O}$
and $G^{1}\mathcal O^{cris}_1=I$). On each $G^r\mathcal O^{cris}_1$, we have two different structures
of ${\mathcal O}$-Modules :

-- the {\it naive structure} which comes from the fact that
$J_1^{[r+1]}\subset J_1^{[r]}$ are  sub-${\mathcal O}$-modules of $\mathcal O^{cris}_1$,

-- the {\it nice structure} which comes from the fact that
$J_1^{[r+1]}\subset J_1^{[r]}$ are  sub-$\mathcal O^{cris}_1$-modules of $\mathcal O^{cris}_1$, but
that $J_1^{[1]}.J_1^{[r]}\subset J_1^{[r+1]}$ and $\mathcal O^{cris}_1/J_1^{[1]}={\mathcal O}$.

If $\lambda$ is a local section of ${\mathcal O}$ and $a$ a local section
of $F^r\mathcal O^{cris}_1$, $\lambda._{naive}a=\lambda^p._{nice}a$. In what follows,
we will always consider $J_1^{[r]}/J_1^{[r+1]}$ as a ${\mathcal
O}$-module via the nice structure (but we have to keep in mind that,
in the structure of $F$-zip, it is the naive structure which matters).

\bigskip
If $A$ is a ring,  $A<\theta_1,\theta_2,\ldots,\theta_d>$ is the
divided power algebra in the indeterminates
$\theta_1,\theta_2,\ldots,\theta_d$ with coefficients in A. If $t$
belongs to some divided power ideal in some $\mathbb Z_{(p)}$-algebra, and if
$m=qp+r$, with
$q,r\in\mathbb N$ and $r<p$, we have
$\gamma_{m}(t)=c_{m}t^r\gamma_q(\gamma_p(t))$ with $c_{m}=
{q!(p!)^q\over m!}$ a
unit in $\mathbb Z_{(p)}$. The following result is the
key for many explicit computations with $\cO^{cris}_1$:

\begin{proposition}\label{prop.good.algebra}
Let $k$ be a perfect ring of characteristic $p>0$, let ${\mathcal A}$
be a smooth $k$-algebra, $t_1,t_2,\ldots,t_d\in{\mathcal A}$ a
regular sequence, $A={\mathcal A}/(t_1^{p},t_2^p,\ldots,t_d^p)$ and $\ov
t_i$ the image of $t_i$ in A.
The unique homomorphism of divided power $A$-algebras
$$A<\theta_1,\theta_2,\ldots,\theta_d>\longrightarrow \mathcal O^{cris}_1(A)$$
sending $\theta_i$ to $\gamma_p(f(\ov t_i))$ is an isomorphism.
Moreover

i) For any $m=qp+r$ with $q,r\in\mathbb N$, $r<p$, set $\gamma_m(t_i)=
c_m{\ov t}_i^r\gamma_q(\theta_i)$. For any $r\in\mathbb N$,
$J_1^{[r]}(A)$ is the image of the sub-$A$-module generated by the
$\gamma_{m_1}(t_1)\gamma_{m_2}(t_2)\ldots\gamma_{m_d}(t_d)$ for $\Sigma
m_i\geq r$.

ii) Let $\ov I$ be the ideal of $A$ generated by the $\ov t_i$ and let
$\ov A=A/\ov I$. The $A$-module quotient $J_1^{[r]}(A)/J_1^{[r+1]}(A)$
is annihilated by $\ov I$ and is a free $\ov A$-module with the images
of the elements
$$
\gamma_{m_1}(t_1)\gamma_{m_2}(t_2)\ldots\gamma_{m_d}(t_d) \quad\mbox{ for }\quad \Sigma m_i= r
$$
as a basis.

iii) The natural map $J_1^{[r]}(A)/J_1^{[r+1]}(A)\to
(J_1^{[r]}/J_1^{[r+1]})(A)$ is injective and, for the nice
structure, $(J_1^{[r]}/J_1^{[r+1]})(A)$ is a free $A$-module with the images
of the elements
$$
\gamma_{m_1}(t_1)\gamma_{m_2}(t_2)\ldots\gamma_{m_d}(t_d) \quad\mbox{ for }\quad \Sigma m_i= r
$$
as a basis.

iv) Let $\rho$ be an endomorphism of the $k$-algebra $A$
such that one can find $\lambda_1,\lambda_2,\ldots,\lambda_d\in A$
with $\rho(\ov t_i)= \lambda_it_i)$. Then the endomorphism induced by
$\rho$ on $A<\theta_1,\theta_2,\ldots,\theta_d>$ (by functoriality
and the isomorphism $A<\theta_1,\theta_2,\ldots,\theta_d>\simeq
\mathcal O^{cris}_1(A)$)
is compatible with the divided power structure and sends $\theta_i$ to
$\lambda_i^p\theta_i$.
\end{proposition}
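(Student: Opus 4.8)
The plan is to identify $\mathcal O^{cris}_1(A)$ with an explicit divided power envelope attached to the smooth lift $\mathcal A$, and then to read off (i)--(iv) from that normal form together with the naive/nice comparison of $\mathcal O$-structures. For the first step: since $t_1,\dots,t_d$ is a regular sequence in the smooth $k$-algebra $\mathcal A$, so is $t_1^p,\dots,t_d^p$, and hence $I=(t_1^p,\dots,t_d^p)=\ker(\mathcal A\to A)$ is generated by a regular sequence. Using the universal property of $\mathcal O^{cris}_1$ as the initial $\mathbb Z/p$-divided power thickening of $\mathcal O$ for the $p$-topology (section \ref{ssec-6.1}) and the formal smoothness of $\mathcal A$, I would identify $\mathcal O^{cris}_1(A)$ as a PD-algebra with a divided power envelope built from $(\mathcal A, I)$ -- concretely, by passing to a smooth $W$-lift $\tilde{\mathcal A}$ of $\mathcal A$, lifting $t_i$ to $\tilde t_i$, forming the PD-envelope of $(p,\tilde t_1^p,\dots,\tilde t_d^p)$ in $\tilde{\mathcal A}$ and reducing modulo $p$. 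One records here that, via $f\colon\mathcal O\to\mathcal O^{cris}_1$ (section \ref{ssec.6-4}), $\mathcal O^{cris}_1(A)$ is an $A$-algebra: $f(\ov t_i)$ is the $p$-th power of a lift of $\ov t_i$ (independent of the lift, since $J_1^{[1]}$ is a PD-ideal in characteristic $p$), so $\theta_i:=\gamma_p(f(\ov t_i))$ is well defined. This reduces the first assertion to a statement in commutative algebra about PD-envelopes of regular immersions.

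For the computation, the identity $\gamma_m(x)=c_m\,x^r\gamma_q(\gamma_p(x))$ for $m=qp+r$, $0\le r<p$, with $c_m$ a unit (recorded just above the Proposition), applied to $x=f(\ov t_i)$, shows that this PD-algebra is generated over $A$ by the divided power monomials $\prod_i\theta_i^{[m_i]}$, while $f(\ov t_i)^p=p!\,\theta_i=0$ (as $p!=0$) is the relation that confines the coefficients to $A$. The structure theorem for the divided power envelope of a regular closed immersion (Berthelot) then gives freeness, i.e. the natural $A$-algebra map $A\langle\theta_1,\dots,\theta_d\rangle\to\mathcal O^{cris}_1(A)$, $\theta_i\mapsto\gamma_p(f(\ov t_i))$, supplied by the universal property of the free divided power polynomial algebra, is bijective; bijectivity I would check by reducing to the universal case $\mathcal A=k[t_1,\dots,t_d]$, $A=k[t_1,\dots,t_d]/(t_i^p)$, and base change along $k[t_1,\dots,t_d]\to\mathcal A$ (which is \'{e}tale on the relevant locus by regularity of the $t_i$). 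The same computation gives (i): rewriting $\gamma_m(f(\ov t_i))$ as $c_m\,\ov t_i^{\,m\bmod p}\gamma_{\lfloor m/p\rfloor}(\theta_i)$ exhibits $J_1^{[r]}(A)$ as the $A$-submodule spanned by the products $\prod_i\gamma_{m_i}(t_i)$ with $\sum_i m_i\ge r$.

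Assertions (ii) and (iii) come from passing to the associated graded $G^r=J_1^{[r]}/J_1^{[r+1]}$ inside the model $A\langle\theta_1,\dots,\theta_d\rangle$. There the factors $\ov t_i^{\,m_i\bmod p}$ with positive exponent act as multiplication by elements of $\ov I=(\ov t_1,\dots,\ov t_d)$, so $G^r$ is annihilated by $\ov I$ and free over $\ov A=A/\ov I$ on the monomials with $\sum_i m_i=r$ -- this is (ii), and it is exactly the generalized $F$-zip decomposition of $\mathcal O^{cris}_1$ from section \ref{ssec-6.3}. For (iii), comparing the naive and nice $\mathcal O$-structures on $G^r$ via $\lambda\cdot_{naive}a=\lambda^p\cdot_{nice}a$ (section \ref{ssec.6-4}) converts the $\ov A$-basis of (ii) into an $A$-basis for the nice structure; injectivity of $J_1^{[r]}(A)/J_1^{[r+1]}(A)\to(J_1^{[r]}/J_1^{[r+1]})(A)$ is immediate from the subsheaf inclusion $J_1^{[r+1]}\subseteq J_1^{[r]}$. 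Finally (iv) is functoriality of $\mathcal O^{cris}_1$: an endomorphism $\rho$ of $A$ with $\rho(\ov t_i)=\lambda_i\ov t_i$ induces a PD-compatible endomorphism of $\mathcal O^{cris}_1(A)=A\langle\theta_1,\dots,\theta_d\rangle$, and since $f$ is functorial and multiplicative, $\rho(\theta_i)=\gamma_p(f(\lambda_i\ov t_i))=f(\lambda_i)^p\,\gamma_p(f(\ov t_i))=\lambda_i^p\theta_i$.

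The main obstacle is the structure theorem used in the second paragraph: proving freeness of the divided power envelope and, above all, pinning its coefficient ring down to $A$ rather than to some auxiliary quotient of $\mathcal A$ -- equivalently, keeping track of the Frobenius implicit in $f$. Once the normal form $A\langle\theta_1,\dots,\theta_d\rangle$ is established, (i)--(iv) are routine bookkeeping.
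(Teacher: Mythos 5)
Your plan for items (ii)--(iv) once a normal form $A\langle\theta_1,\dots,\theta_d\rangle$ is in hand is essentially the same bookkeeping the paper does, and your computation of the coefficient identity $\gamma_m(x)=c_m x^r\gamma_q(\gamma_p(x))$ matches the one recorded before the proposition. But the foundation of the argument --- the identification $\mathcal O^{cris}_1(A)\cong A\langle\theta_1,\dots,\theta_d\rangle$ --- is exactly where your proposal has a gap, and indeed you flag it yourself in the last paragraph without actually filling it.

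The issue is that $\mathcal O^{cris}_1$ is the \emph{syntomic sheafification} of the presheaf of PD-envelopes, so $\mathcal O^{cris}_1(A)$ is not given by forming a single PD-envelope (of a $W$-lift or otherwise) and reducing mod $p$. If one blindly forms the PD-envelope of the regular sequence $t_1^p,\dots,t_d^p$ in a lift (over $k$ or over $W$, it does not matter) and reduces, the Berthelot structure theorem delivers a free PD polynomial algebra whose coefficient ring is $\mathcal A/(t_1^{p^2},\dots,t_d^{p^2})$, not $A=\mathcal A/(t_1^p,\dots,t_d^p)$ --- already for $\mathcal A=k[t]$ you get $(k[t]/t^{p^2})\langle\gamma_p(t^p)\rangle$ rather than $(k[t]/t^p)\langle\theta\rangle$. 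This is precisely the ``auxiliary quotient of $\mathcal A$'' you mention and do not dispose of. The paper's proof handles it via a genuine Frobenius descent: set $\mathcal A'=\mathcal A$ viewed as an $\mathcal A$-algebra through the absolute Frobenius, take the PD-envelope $\mathcal D'$ of $(\mathcal A',(t_1,\dots,t_d))$ (which the preparatory Lemma identifies with $A'\langle\theta\rangle$), and then compute $\mathcal O^{cris}_1(A)$ not as $\mathcal D'$ but as the \emph{horizontal sections} $\ker(\mathcal D'\to\mathcal D'\otimes_{\mathcal A'}\Omega^1_{\mathcal A'/k})$, using $d\theta_i=\gamma_{p-1}(t_i^p)\cdot p t_i^{p-1}=0$ and the exactness of $0\to A\to A'\to\Omega^1_{A'/k}$. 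That last exact sequence is what shrinks the coefficients from $A'$ to the $p$-th power subring $A\subset A'$, and it is the step your proposal omits entirely.

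Two smaller points: the universal property of $\mathcal O^{cris}_1$ as initial PD-thickening does not, by itself, give an explicit description of $\mathcal O^{cris}_1(A)$ for a fixed $A$, since that universal property is relative to the whole $p$-site; and the claimed base change along $k[t_1,\dots,t_d]\to\mathcal A$ ``which is \'etale on the relevant locus by regularity of the $t_i$'' is not justified --- a regular sequence in a smooth $k$-algebra of dimension $>d$ does not produce an \'etale (nor in general even flat) map from the polynomial ring. The paper avoids this by arguing directly on $\mathcal A$ and by verifying sheafiness of $A\mapsto P'(A)$ by faithfully flat descent in the proof of (iii).
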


We start with a lemma:

\begin{lemma}\label{lem.good.algebra}
Let ${\mathcal A}'$ be a $k$-algebra, $t_1,t_2,\ldots,t_d\in {\mathcal
A}'$ a regular squence and  $A'= {\mathcal A}'/(t_1^p,t_2^p,\ldots,t_d^p)$.
Let ${\mathcal D}'$ the divided power envelope of ${\mathcal A}'$ with respect
to the ideal generate by the $t_i$'s and $\xi:{\mathcal A}'\to {\mathcal D}'$
the structural map. Then:

i) each $t_i^p$ belongs to the kernel of $\xi$ (and therefore ${\mathcal D}'$
may be viewed as an $A'$-algebra),

ii) the unique homomorphism of divided power $A'$-algebras
$$\eta: A'<\theta_1,\theta_2,\ldots,\theta_d>\to {\mathcal D}'$$
sending $\theta_i$ to $\gamma_p(t_i)$ is an isomorphism.
\end{lemma}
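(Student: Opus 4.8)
The statement to prove is Lemma~\ref{lem.good.algebra}: for a regular sequence $t_1,\ldots,t_d$ in a $k$-algebra $\mathcal A'$, with $A' = \mathcal A'/(t_1^p,\ldots,t_d^p)$ and $\mathcal D'$ the divided power envelope of $\mathcal A'$ along the ideal $(t_1,\ldots,t_d)$, we must show (i) each $t_i^p$ dies in $\mathcal D'$, so $\mathcal D'$ is naturally an $A'$-algebra, and (ii) the map $\eta\colon A'\langle\theta_1,\ldots,\theta_d\rangle \to \mathcal D'$, $\theta_i \mapsto \gamma_p(t_i)$, is an isomorphism of divided power $A'$-algebras.  The overall strategy is to reduce to the universal case and compute the DP envelope explicitly by a standard filtration/basis argument.

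First I would prove (i).  Inside $\mathcal D'$ we have $t_i^p = p!\,\gamma_p(t_i)$, and $p!$ is divisible by $p$ (not invertible, but that is fine); more to the point, in a divided power ring the element $t_i^p$ lies in the DP ideal and one has the identity $t_i^p = p!\,\gamma_p(t_i)$ together with the fact that $\gamma_p(t_i)$ is already a well-defined element, so $t_i^p$ is a specific element of the DP ideal — but this alone does not make it zero.  The correct argument: since $k$ has characteristic $p$, in $\mathcal D'$ we compute $\xi(t_i)^p = t_i^p$, and the defining relations of a DP structure over an $\mathbb F_p$-algebra force $x^p = p!\,\gamma_p(x) = 0$ for every $x$ in the DP ideal, because $p! \equiv 0$ in any ring killed by $p$.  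Hence $t_i^p = 0$ in $\mathcal D'$, giving (i); this also shows $\mathcal D'$ is an $A'$-algebra and explains why $\eta$ is even defined.

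For (ii) the plan is to first treat the universal/polynomial case $\mathcal A' = k[t_1,\ldots,t_d]$ (or a polynomial ring over any base), where the sequence is manifestly regular, and compute $\mathcal D'$ directly: the DP envelope of a polynomial ring along $(t_1,\ldots,t_d)$ is the free DP algebra $k[t_1,\ldots,t_d]\langle \gamma_p(t_1),\ldots,\gamma_p(t_d)\rangle$-type object, and using $t_i^p = 0$ one identifies it with $A'\langle\theta_1,\ldots,\theta_d\rangle$ by matching monomial bases: every element of the DP envelope is uniquely a sum of terms $\prod_i \gamma_{m_i}(t_i)$ with the $\bar t_i$-part of each factor of degree $< p$, and the decomposition $m_i = q_i p + r_i$ (as in the paragraph preceding the Proposition, with the unit constant $c_m = q!(p!)^q/m!$) exhibits exactly the monomial basis of the free DP algebra on the $\theta_i$ over $A'$.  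Then one passes to the general regular sequence by base change: a regular sequence in $\mathcal A'$ gives a flat (in fact, after filtering by powers of the ideal, locally free) situation, so the map from the polynomial ring $k[T_1,\ldots,T_d]\to\mathcal A'$, $T_i\mapsto t_i$, is such that DP envelopes commute with the base change $k[T_i]\to\mathcal A'$ — this is the standard flat base change property of divided power envelopes, valid because regularity of the sequence guarantees the requisite flatness of the associated graded.  Combining, $\mathcal D' \cong \mathcal D'_{\mathrm{univ}}\otimes_{k[T_i]}\mathcal A' \cong A'\langle\theta_1,\ldots,\theta_d\rangle$, and by construction this isomorphism is the map $\eta$.

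**The main obstacle.**  The delicate point is the base-change step: flat (or PD-flat) base change for divided power envelopes is standard (it is in Berthelot--Ogus), but one must verify that a regular sequence genuinely provides the hypothesis needed — concretely, that the associated graded of $\mathcal A'$ for the $(t_1,\ldots,t_d)$-adic filtration is a polynomial ring over $\mathcal A'/(t_1,\ldots,t_d)$, which is precisely the Koszul-regularity condition, and that this survives after killing the $t_i^p$.  I would isolate this as the technical heart of the argument, invoke the base-change theorem for PD envelopes, and reduce everything else to the explicit polynomial computation, where the only real content is the combinatorial bookkeeping of the constants $c_m$ already recorded just before the Proposition.
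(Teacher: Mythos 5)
Your proof of part (i) matches the paper: in characteristic $p$, $t_i^p = p!\,\gamma_p(t_i) = 0$ in $\mathcal D'$, so $\mathcal D'$ is an $A'$-algebra. (Your opening sentence on (i) hedges unnecessarily — the computation $t_i^p = p!\,\gamma_p(t_i) = 0$ really is the whole argument, and you do end up saying exactly this.)

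For part (ii) your route is genuinely different from the paper's. The paper works directly inside $\mathcal D'$: surjectivity of $\eta$ comes from the identity $\prod_i\gamma_{m_i}(t_i) = \bigl(\prod_i c_{m_i}\bigr)\bigl(\prod_i t_i^{r_i}\bigr)\prod_i\gamma_{q_i}(\gamma_p(t_i))$ with $m_i = pq_i + r_i$; injectivity comes from comparing the PD filtrations on both sides and invoking Berthelot \cite{Be} I, 3.4.4, which says that regularity of the $t_i$ forces $\Gamma^m_{\ov A'}(\mathcal I/\mathcal I^2) \mathop{\longrightarrow}\limits^{\sim} \mathcal J^{[m]}/\mathcal J^{[m+1]}$, together with separatedness $\cap_m\mathcal J^{[m]}_1 = 0$. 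You instead propose to compute the universal polynomial case $\mathcal A' = k[T_1,\ldots,T_d]$ explicitly (using the same constants $c_m$) and then pull the answer to $\mathcal A'$ by a base-change theorem for PD envelopes along $k[T_1,\ldots,T_d]\to\mathcal A'$, $T_i\mapsto t_i$. Both arrive at the same place, but the paper's argument is self-contained modulo a single precise citation, whereas your version concentrates the difficulty in the base-change step.

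That base-change step is where you need to be more careful than your write-up is. You invoke ``the standard flat base change property of divided power envelopes,'' but the morphism $k[T_1,\ldots,T_d]\to\mathcal A'$ sending $T_i\mapsto t_i$ is \emph{not} flat in general. What regularity of $t_1,\ldots,t_d$ does give you is exactness of the Koszul complex, i.e.\ Tor-independence of $\mathcal A'$ and $k[T]/(T_1,\ldots,T_d)$ over $k[T]$; this is weaker than flatness, and the version of PD base change that applies here (e.g.\ Berthelot's criterion, or equivalently the statement that the associated graded of $\mathcal D'$ is $\Gamma_{\ov A'}(\mathcal I/\mathcal I^2)$) is precisely the non-flat form. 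You do gesture at this (``Koszul-regularity condition''), which shows you see the issue, but you should state the theorem you are actually using — otherwise a reader will object that you assumed flatness you do not have. Once that is fixed your argument is sound, and in fact the input you would cite is the same Berthelot I, 3.4.4 the paper cites; the paper's direct filtration argument just avoids having to formulate a separate base-change statement.
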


\ni {\it Proof of the lemma}: We have $t_i^p=p!\gamma_p(t_i)=0$, which
proves (i).

As an $A'$-algebra, ${\mathcal D}'$ is generated by the $\Pi
\gamma_{m_i}(t_i)$. But, if $m_i=pq_i+r_i$ with $q_i,r_i\in\mathbb N$ and
$r_i<p$, we have
$$\Pi \gamma_{m_i}(t_i)= \Pi c_{m_i}\Pi t_i^{r_i}\Pi
\gamma_{m_i}(\gamma_p(t_i))$$
hence ${\mathcal D}'$ is also generated by the $\Pi
\gamma_{m_i}(\gamma_p(t_i))$ and $\eta$ is surjective.

For $m\in\mathbb N$, let  ${\mathcal J}^{[m]}$ the $m$-th divided power of the
structural divided power ideal of ${\mathcal D}'$, let ${\mathcal I}$ the
ideal of ${\mathcal
A}'$ generated by the $t_i$'s and $\ov A'={\mathcal
A}'/{\mathcal I}$. As the $t_i$'s form a regular sequence, ${\mathcal I}/{\mathcal
I}^2$ is a free $\ov A'$-module with the image of the $t_i$'s as a
basis and the canonical map $\Gamma^{m}_{\ov A'}({\mathcal I}/{\mathcal
I}^2)\to {\mathcal J}^{[m]}/{\mathcal J}^{[m+1]}$ is an isomorphism, where $\Gamma^r_{\ov A'}(M)$
denotes the $r$-th divided power of an $\ov A'$-module $M$ \cite{Be} I, 3.4.4.
If ${\mathcal J}_1^{[m]}$ denote the inverse image under $\eta$ of ${\mathcal
J}^{[m]}$, we see that, for all $m$, the induced map
${\mathcal J}_1^{[m]}/{\mathcal J}_{1}^{[m+1]}\to {\mathcal J}^{[m]}/{\mathcal J}^{[m+1]}$
is an isomorphism. As $\cap_{m\in\mathbb N}{\mathcal J}_1^{[m]}=0$, the map
$\eta$ is injective. $\square$

\medskip
\ni {\it Proof of proposition \ref{prop.good.algebra}}
Let ${\mathcal A}'={\mathcal A}$, but viewed as an ${\mathcal A}$-algebra via the
absolute Frobenius, that we use to identify ${\mathcal A}$ to a sub-ring
of ${\mathcal A}'$. Therefore, any element of ${\mathcal A}$ has a unique
$p$-th root in ${\mathcal A'}$ and, for an $b\in {\mathcal A}'$, we have
$b^p\in {\mathcal A}$.
We denote by $t'_i$ the element $t_i$ viewed as an
element of ${\mathcal A}'$, hence $(t'_i)^p=t_i$.

As ${\mathcal A}$ is smooth, ${\mathcal A}'$ is a syntomic ${\mathcal A}$-algebra
and $t_1,t_2,\ldots,t_d$ is still a regular sequence in ${\mathcal A}'$.
The map $\varphi:{\mathcal A}'\to {\mathcal A}$ sending $b$ to $b^p$ is
an isomorphism and the compositum $\ov\varphi$ with the projection
onto $A$ is surjective, with kernel the ideal
generated by the $t_i$'s. As ${\mathcal A}'$ is smooth, if ${\mathcal D}'$ is
as in the lemma, we have (ref. XXX) an exact sequence
$$0\to \mathcal O^{cris}_1(A)\to {\mathcal D}'\to {\mathcal D'}\otimes_{{\mathcal A}'}\Omega_{{\mathcal
A}'/k}^1$$
which, granted to the previous lemma, can be rewritten as

$$0\to \mathcal O^{cris}_1(A)\to A'<\theta_1,\theta_2,\dots,\theta_d>\to
A'<\theta_1,\theta_2,\dots,\theta_d>\otimes_{{\mathcal A}'}\Omega_{{\mathcal
A}'/k}^1$$
with $\theta_i$ mapping to $\gamma_p(t_i^p)$. Therefore, we have
$d\theta_i= \gamma_{p-1}(t_i^p).pt_i^{p-1}=0$. For
$m=(m_1,m_2,\ldots,m_d)\in\mathbb N^d$ set
$\gamma_m(\theta)=\Pi\gamma_{m_i}(\theta_i)$.
We have also $d\gamma_m(\theta)=0$, for all $m$.

But $A'<\theta_1,\theta_2,\dots,\theta_d>$ is a free $A'$-module with
basis the $\gamma_m(\theta)$'s.
If $\sum a_{m}\gamma_{m}(\theta)\in A'<\theta_1,\theta_2,\dots,\theta_d>$,
we have $d(\sum a_{m}\gamma_{m}(\theta))=\sum da_{m}\gamma_m(\theta)$.
It is easy to check that we have
$\Omega_{A'/k}^1=A'\otimes_{{\mathcal A}'}\Omega_{{\mathcal A}'/k}^1$ and
that the sequence
$$0\to A\to A'\to \Omega_{A'/k}^1$$
is exact. This proves that the map
$$A<\theta_1,\theta_2,\dots,\theta_d>\to\mathcal O^{cris}_1(A)$$
is an isomorphism.

The proof of (i) and (ii) and (iv) are straightforward. Let us prove
(iii). We have to understand the sheaf $F$ associated to
the presheaf $P: X\mapsto J_{1}^{[r]}(X)/J_{1}^{[r+1]}(X)$. It is
enough to consider the restriction of the functor $P$ to the objects
$X$ of ${\mathcal C}$ of the form $X=Spec\ A$, with $A$ as in the
proposition.

Let $M$ be the set of the $\u m=\{m_1,m_2,\ldots,m_d\}\in\mathbb N^{d}$ such
that $\sum
m_i=r$.  For any such $\u m$,  let $\gamma_{\u m}$ the image of $\Pi
\gamma_{m_i}(t_i)$ in $P(A)$ and $\tilde\gamma_{\u m}$ its image in
$F(A)$. Checking carefully, we see that the map $P(A)\to
F(A)$ is $A$-linear if $F(A)$ is equipped with the naive structure.
We may express this fact as follows: Let $A_1$ the image of $\varphi:
A\to A$. The absolute Frobenius induces an isomorphism $f: \ov A\to
A_1$ and we may view it to view $P(A)$ as a free $A_1$-module. Now
the map $P(A)\to FA)$ is $A_1$-linear when we endow $F(A)$ with the
$A_1$-structure coming from the inclusion $A_1\subset A$ and the nice
structure of $A$-module on $F(A)$. Therefore, if we set
$P'(A)=A\otimes_{A_1}P(A)$, we have a natural $A$-linear map
$P'(A)\to F(A)$ and what we want to prove is that this map is an
isomorphism.

Because there are enough algebras of this kind, it makes sense to
speak of the sheaf associated to the presheaf $A\to P'(A)$ and all
what we have to prove is that this presheaf is a sheaf. We are easily
reduced to checking that, if $B$ is any faithfully flat $A$-algebra of
the type $A[x_1,x_2,\ldots,x_r]/(u_1,u_2,\ldots,u_n)$ with
$u_1,u_2,\ldots,u_n\in A[x_1,x_2,\ldots,x_r]$ a sequence which is
transverse regular with respect to $A$, then the sequence
$$0\to P'(A)\to P'(B)\begin{matrix}\to\\\to\end{matrix} P'(B\otimes_{A}B)$$
is exact. Replacing $B$ with a covering if necessary, we may assume
that $u_i=v_i^p$ and the $v_i^p$ 's are transverse regular as well.
Set  ${\mathcal B}={\mathcal A}[x_1,x_2,\ldots,x_r]$ and ${\mathcal C}={\mathcal
B}\otimes_{{\mathcal A}}{\mathcal B}= {\mathcal
A}[x_1,x_2,\ldots,x_r,y_1,y_2,\ldots,y_r]$ if we
set $x_i=x_i\otimes 1$ and $y_i=1\otimes x_i$. If for $1\leq i\leq
n$, we choose a lifting $t_{d+i}$ of $v_i$ in ${\mathcal B}$ and if, for
$1\leq i\leq n$,  we set $t_{d+i}=t_{d+i}\otimes 1$ and
$t_{d+n+i}=1\otimes t_{d+i}$, The sequence $(t_i)_{1\leq i\leq d}$
(resp. $(t_i)_{1\leq i\leq d+n}$, $(t_i)_{1\leq i\leq d+2n}$)
is regular in ${\mathcal A}$ (resp. ${\mathcal B}$, resp. ${\mathcal C}$) and
$$
A= {\mathcal A}/((t_i^p)_{1\leq i\leq d})\ , \ B= {\mathcal B}/((t_i^p)_{1\leq
i\leq d+n})\ , \ B\otimes_AB = {\mathcal C}/((t_i^p)_{1\leq i\leq d+2n})
$$
Let $M$ (resp. $N$, resp. $L$) the set of the $\u m=(m_i)_{1\leq i\leq d}$
(resp. $(m_i)_{1\leq i\leq d+n}$, resp. $(m_i)_{1\leq i\leq d+2n}$)
such that $\sum m_i= r$ and, for $\u m \in M$ (resp. $N$, $L$), let
$\gamma_{\u m}(t)$ the image of $\Pi\gamma_{m_i}(t_i)$ in $P'(A)$
(resp. $P'(B)$, $P'(B\otimes_AB)$). These $\gamma_{\u m}(t)$ form a
basis respectively of the free $A$-module $P'(A)$, the free
$B$-module $P'(B)$, the free $B\otimes_AB$-module $P'(B\otimes_AB)$
and we are reduced to prove the exactness of the sequence
$$0\to \oplus_{\u m\in M}A\gamma_{\u m}(t)\to
\oplus_{\u m\in N}B\gamma_{\u m}(t)\begin{matrix}\to\\\to\end{matrix}
\oplus_{\u m\in L}B\otimes_AB\gamma_{\u m}(t)$$
This is easily reduced to the exactness of
$$0\to A\to B\to B\otimes_A B$$
which comes from the fact that $B$ is faithfully flat over $A$.
$\square$

\medskip
Call $J$ the kernel of the absolute Frobenius $\varphi$ on ${\mathcal O}$ (hence,
with the usual notations $J=\alpha_p$).

\begin{proposition}\label{prop.comm.diag}
We have a commutative diagram of abelian sheaves
$$\begin{matrix}0 & \to & J & \to & {\mathcal O}&\mathop{\to}\limits^{\varphi}&{\mathcal O}&\to&0\\
&  & \downarrow && \llap{}\left\downarrow
\vbox to 3mm{}\right.\rlap{$\scriptstyle f$} & &\parallel\\
0 & \to & J_1^{[1]} & \to & \mathcal O^{cris}_1&\to&{\mathcal O}&\to& 0\end{matrix}$$
whose lines are exact and whose vertical arrows are monomorphisms.
\end{proposition}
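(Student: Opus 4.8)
The plan is to verify separately the three assertions contained in the statement — exactness of the two rows, commutativity of the two squares, and injectivity of the vertical maps — the point being that only the injectivity of the middle map $f$ is not formal.

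First, both rows are short exact essentially by definition. In the top row $J$ is, by definition, the kernel of the absolute Frobenius $\varphi$ on $\mathcal O$, and $\varphi\colon\mathcal O\to\mathcal O$ is an epimorphism of sheaves for the ($p$-admissible) topology we use: for a local section $a$ of $\mathcal O$ over $X$, the extraction of a $p$-th root of $a$, i.e. $\mathrm{Spec}(\mathcal O_X(X)[t]/(t^p-a))\to X$, is a covering, and over it $a=\varphi(t)$. In the bottom row $J_1^{[1]}$ is by definition $\ker(\theta\colon\mathcal O^{cris}_1\to\mathcal O)$, with $\theta$ the structural epimorphism. For the right-hand square it is enough to check $\theta\circ f=\varphi$, which is immediate from the construction of $f$: if $b$ is a local lift of $a$ along $\theta$, then $f(a)=b^p$, whence $\theta(f(a))=\theta(b)^p=a^p=\varphi(a)$. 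From $\theta f=\varphi$ it follows that $f$ carries $J=\ker\varphi$ into $\ker\theta=J_1^{[1]}$, and the left vertical arrow is, by definition, the map $J\to J_1^{[1]}$ induced by $f$; so the left square commutes tautologically. The right vertical arrow is the identity, so the whole statement will be proved once we know $f$ is a monomorphism — for then the left vertical arrow, being a restriction of $f$, is one as well.

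Thus the heart of the matter is the injectivity of $f\colon\mathcal O\to\mathcal O^{cris}_1$, which (since $\mathcal O$ is a sheaf) may be checked on sections over the objects of a generating family for the topology. Here I would reduce to $X=\mathrm{Spec}\,A$ with $A=\mathcal A/(t_1^p,\dots,t_d^p)$, where $\mathcal A$ is smooth over a perfect ring $k$ and $(t_1,\dots,t_d)$ is a regular sequence: writing $A$ locally as a syntomic algebra over the perfect ring $A_{per}$ and presenting it as $A_{per}[x_1,\dots,x_n]/(g_1,\dots,g_c)$ with $(g_j)$ a regular sequence, one extracts $p$-th roots of the $x_i$ (a covering, using perfectness of $A_{per}$), which rewrites the presentation as $A_{per}[y_1,\dots,y_n]/(h_1^p,\dots,h_c^p)$ with $g_j=h_j^p$ and $(h_j)$ again regular. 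For such $A$, Proposition~\ref{prop.good.algebra} identifies $\mathcal O^{cris}_1(A)$ with the divided power polynomial algebra $A\langle\theta_1,\dots,\theta_d\rangle$ in such a way that $f$ becomes the structural embedding $A\hookrightarrow A\langle\theta_1,\dots,\theta_d\rangle$; in particular $\mathcal O^{cris}_1(A)$ is, via $f$, a free $A$-module having $f(A)$ as a direct summand, so $f$ is (split) injective. Hence $f(a)=0$ forces $a=0$ already on such a cover, and $f$ is a monomorphism.

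The main obstacle is precisely this last step. The injectivity of $f$ is genuinely not formal — note that $\mathcal O$ is not reduced on $\mathcal C$ (e.g. $\mathrm{Spec}\,k[t]/(t^p)$ is absolutely syntomic over a perfect field), so one cannot argue via reducedness — and it relies on the explicit local description of $\mathcal O^{cris}_1$ from Proposition~\ref{prop.good.algebra} together with the fact that every object of $\mathcal C$ admits, locally for our topology, a presentation of the special shape $\mathcal A/(t_i^p)$ with $\mathcal A$ smooth over a perfect ring. Once $f$ is known to be a monomorphism, the remainder of the proposition is bookkeeping.
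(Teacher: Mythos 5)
Your proof follows essentially the same path as the paper's: exactness of the rows and commutativity of the squares are formal (with $\varphi$ an epimorphism because the topology allows extraction of $p$-th roots), and the sole substantive point---injectivity of $f$---is reduced, exactly as in the paper, to the explicit identification $\cO^{cris}_1(A)\cong A\langle\theta_1,\ldots,\theta_d\rangle$ of Proposition~\ref{prop.good.algebra} after covering by affines $\mathrm{Spec}\,A$ with $A=\cA/(t_1^p,\ldots,t_d^p)$ and $\cA$ smooth over a perfect ring. Your additional paragraph explaining why such covers exist (presenting $A$ as a syntomic $A_{per}$-algebra and extracting $p$-th roots of the coordinates) correctly fills in a step the paper leaves implicit.
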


\begin{proof}
The commutativity of the diagram is clear.
We already know that the second line is exact. As our topology allows
us to extract $p$-th roots, the Frobenius is an epimorphism and the
first line is also exact. We are left to check that $f$ is a monomorhism.
As we can always cover any object $X$ of ${\mathcal C}$ by affine
$k$-schemes $Y=Spec\ A$, with $A$ as in the previous proposition, we
are reduced to check the injectivity of $A\to\mathcal O^{cris}_1(A)$ which is a
consequence of that proposition. $\square$
\end{proof}

\medskip
If $A$ is a commutative ring and $M$ is an $A$-module, we may
consider the symmetric algebra $Sym_AM=\oplus_{r\in\mathbb N}Sym^r_AM$ and
the divided power algebra $\Gamma_AM=\oplus_{r\in\mathbb N}\Gamma_A^rM$.
They are graded algebras, with grading indexed by $\mathbb N$. For
$r\in\mathbb N$, $\Gamma_A^rM$ is the sub-$A$-module of $\Gamma_AM$ generated
by the $\gamma_{r_1}(x_1)\gamma_{r_2}(x_2)\ldots\gamma_{r_d}(x_d)$ with
$r_1,r_2,\ldots,r_d\in\mathbb N$ satisfying $\Sigma r_i=r$ and
$x_1,x_2,\ldots,x_d\in M$. This extends to rings and modules in a topos.

\begin{proposition}\label{prop.intersection.Fr}
We have $\cap_{r\in\mathbb N}J_1^{[r]}=0$ Moreover, for any $r\in\mathbb N$, the
obvious map
$$\Gamma_{{\mathcal O}}^rI\to J_1^{[r]}/J_1^{[r+1]}$$
is an isomorphism of ${\mathcal O}$-modules.
\end{proposition}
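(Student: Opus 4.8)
The plan is to reduce both statements to the explicit computation of $\cO^{cris}_1$ on ``good'' algebras provided by Proposition \ref{prop.good.algebra}. Since the vanishing of $\bigcap_r J_1^{[r]}$ and the bijectivity of $\Gamma^r_{\mathcal O}I \to J_1^{[r]}/J_1^{[r+1]}$ are both local statements about sheaves on $\mathcal C_E$, it is enough to test them on a generating family; and, exactly as in the proof of Proposition \ref{prop.comm.diag}, every object of $\mathcal C$ can be covered by affine schemes $\mathrm{Spec}\,A$ with $A = \mathcal A/(t_1^p,\ldots,t_d^p)$, where $\mathcal A$ is smooth over a perfect field $k$ and $t_1,\ldots,t_d$ is a regular sequence in $\mathcal A$. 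So the first thing I would do is fix such an $A$ and note, using left-exactness of sections, that it suffices to prove $\bigcap_r J_1^{[r]}(A)=0$ and that $\Gamma^r_A I(A) \to (J_1^{[r]}/J_1^{[r+1]})(A)$ is bijective, where throughout $I(A)$ and $J_1^{[r]}/J_1^{[r+1]}$ carry the nice $A$-module structure and $\Gamma^r_A$ denotes the $r$-th divided power over $A$.

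For the intersection, I would use that $\cO^{cris}_1(A)\cong A\langle\theta_1,\ldots,\theta_d\rangle$ is $A$-free on the divided-power monomials $\gamma_{\underline q}(\theta):=\prod_i\gamma_{q_i}(\theta_i)$ $(\underline q\in\mathbb N^d)$, while by Proposition \ref{prop.good.algebra}(i) the submodule $J_1^{[r]}(A)$ is generated over $A$ by the products $\gamma_{m_1}(t_1)\cdots\gamma_{m_d}(t_d)$ with $\sum_i m_i\geq r$. Writing $m_i=q_ip+s_i$ with $0\leq s_i<p$, the identity $\gamma_m(t)=c_m\,t^{s}\gamma_q(\gamma_p(t))$ shows that each such product is a $\mathbb Z_{(p)}$-unit times $\bigl(\prod_i\ov t_i^{\,s_i}\bigr)\gamma_{\underline q}(\theta)$, hence lies in the single free summand $A\cdot\gamma_{\underline q}(\theta)$. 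Thus $J_1^{[r]}(A)$ is homogeneous for the $\mathbb N^d$-grading of $A\langle\theta\rangle$, and its $\gamma_{\underline q}(\theta)$-component can be nonzero only when $p\sum_i q_i+\sum_i s_i\geq r$ for some $\underline s\in\{0,\ldots,p-1\}^d$, i.e.\ only when $p\sum_i q_i\geq r-d(p-1)$. Hence for any finite sum $x=\sum_{\underline q}a_{\underline q}\gamma_{\underline q}(\theta)$ lying in every $J_1^{[r]}(A)$, choosing $r$ with $r>p\max\{\sum_i q_i:a_{\underline q}\neq 0\}+d(p-1)$ forces all $a_{\underline q}=0$, so $x=0$.

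For the graded pieces, Proposition \ref{prop.good.algebra}(iii) tells me that $(J_1^{[r]}/J_1^{[r+1]})(A)$ is $A$-free on the classes $[\gamma_{m_1}(t_1)\cdots\gamma_{m_d}(t_d)]$ with $\sum_i m_i=r$; for $r=1$ this says $I(A)$ is $A$-free on $\epsilon_i:=[\gamma_1(t_i)]$, $i=1,\ldots,d$. The induced divided powers make $\mathrm{gr}:=\bigoplus_r(J_1^{[r]}/J_1^{[r+1]})(A)$ a graded divided-power $A$-algebra with $\mathrm{gr}^1=I(A)$, so the universal property of $\Gamma_A$ yields a canonical homomorphism $\Gamma_A(I(A))\to\mathrm{gr}$ of graded divided-power $A$-algebras — this is (the sections over $A$ of) the obvious map. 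Again by the identity $\gamma_m(t)=c_m\,t^s\gamma_q(\gamma_p(t))$ together with $\gamma_p(t_i)=\theta_i$, the class $[\gamma_{m_1}(t_1)\cdots\gamma_{m_d}(t_d)]$ is a $\mathbb Z_{(p)}$-unit times the divided-power monomial $\gamma_{m_1}(\epsilon_1)\cdots\gamma_{m_d}(\epsilon_d)$ in $\mathrm{gr}$. Therefore $\Gamma^r_A(I(A))\to\mathrm{gr}^r$ carries the $A$-basis $\{\gamma_{\underline m}(\epsilon):\sum_i m_i=r\}$ of the source bijectively (up to unit scalars) onto the $A$-basis of the target, hence is an isomorphism in each degree $r$; passing to associated sheaves gives $\Gamma^r_{\mathcal O}I\cong J_1^{[r]}/J_1^{[r+1]}$.

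The calculations I would treat as routine are the $p$-adic valuation check that the constants $c_m$ (and products thereof) are units in $\mathbb Z_{(p)}$, and the bookkeeping that the map $\Gamma^r_A I(A)\to(J_1^{[r]}/J_1^{[r+1]})(A)$ is $A$-linear precisely for the nice structure. The step I expect to be the real obstacle is the reduction at the outset: one must know that the good algebras of Proposition \ref{prop.good.algebra} form a conservative family for the topology $E$, so that $J_1^{[r]}$, $\bigcap_r J_1^{[r]}$ and $\Gamma^r_{\mathcal O}I$ are faithfully computed by their sections over such $A$ — in particular that identifying the sheaf $\Gamma^r_{\mathcal O}I$ on this family with $\Gamma^r_A I(A)$ involves no loss under sheafification. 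Granting the analogous reduction already used in the proof of Proposition \ref{prop.comm.diag}, what remains is the explicit linear algebra over $A\langle\theta_1,\ldots,\theta_d\rangle$.
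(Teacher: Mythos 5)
Your proof is correct and follows the same route as the paper: reduce to the explicitly computable algebras $A=\mathcal A/(t_1^p,\ldots,t_d^p)$ of Proposition \ref{prop.good.algebra} (which form a conservative family for the topology), then exploit the explicit $A$-basis of $\cO^{cris}_1(A)\cong A\langle\theta_1,\ldots,\theta_d\rangle$ and the description of $J_1^{[r]}(A)$ and $(J_1^{[r]}/J_1^{[r+1]})(A)$ given there. The paper's own proof is a single sentence asserting exactly this reduction; you have filled in the linear algebra that the paper leaves implicit, and both your $\mathbb N^d$-grading argument for $\bigcap_r J_1^{[r]}(A)=0$ and your basis-to-basis comparison for $\Gamma_A^r(I(A))\to \mathrm{gr}^r$ are sound. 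Two small remarks: in the second part the map actually sends $\gamma_{\underline m}(\epsilon)$ to $[\gamma_{\underline m}(t)]$ on the nose (the unit $c_m$ plays no role there since the basis of the target is already expressed in the $t_i$, not the $\theta_i$), and the passage from $\Gamma^r_A(I(A))$ to $(\Gamma^r_{\mathcal O}I)(A)$ — which you correctly flag as the delicate point — rests on $I$ being locally free for the nice structure together with the faithfully flat descent argument already carried out in the proof of Proposition \ref{prop.good.algebra}(iii).
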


\begin{proof} From proposition \ref{prop.good.algebra}, we see that for any $k$-algebra $A$
satisfying the condition of \ref{prop.good.algebra}, we have
$\cap_{r\in\mathbb N}J_1^{[r]}(A)=0$ and the map $\Gamma^r_{{\mathcal O}}I(A)\to
(J_1^{[r]}/J_1^{[r+1]})(A)$ is an isomorphism. The proposition
thereof follows from the fact that any object $X$ of ${\mathcal C}$ has a
covering  by affine
$k$-schemes $Y=Spec\ A$, with $A$ of this kind.  $\square$
\end{proof}

\medskip
In view of proposition \ref{prop.comm.diag}, we may use $f$ to regard ${\mathcal O}$ as a
sub-ring of $\mathcal O^{cris}_1$ and
$J$ as a sub ${\mathcal O}$-Module of $\mathcal O^{cris}_1$. For any
$r\in\mathbb N$, we call $\tilde{F}_r\mathcal O^{cris}_1$ the sub-${\mathcal O}$-module of
$\mathcal O^{cris}_1$ generated locally by the elements of the form
$\gamma_{pr_1}(x_1)\gamma_{pr_2}(x_2)\ldots\gamma_{pr_d}(x_d)$ with
$r_1,r_2,\ldots,r_d\in\mathbb N$ satisfying $\Sigma r_i\leq r$ and
$x_1,x_2,\ldots,x_d\in J$. We have $\tilde{F}_0\mathcal O^{cris}_1= {\mathcal O}$
and we set $\tilde{F}_{-1}\mathcal O^{cris}_1=0$.

The next theorem gives the second construction
for the structure of $F$-zip on $\mathcal O^{cris}_1$~:

\begin{theorem}\label{thm.cartier.iso.1}
i) We have $\cup_{r\in\mathbb N}\tilde{F}_r\mathcal O^{cris}_1=\mathcal O^{cris}_1$.

ii) (Cartier isomorphism, second version) Let $r\in\mathbb N$. There is a unique morphism of ${\mathcal O}$-modules
$$f_r: J_1^{[r]}/J_1^{[r+1]}\to \tilde{F}_r\mathcal O^{cris}_1/\tilde{F}_{r-1}\mathcal O^{cris}_1$$
such that, if $r_1,r_2,\ldots,r_d\in\mathbb N$ satisfies $\Sigma r_i=r$ and if
$x_1,x_2,\ldots,x_d\in J_1$, then $f_r$ sends the image of
$\gamma_{r_1}(x_1)\gamma_{r_2}(x_2)\ldots\gamma_{r_d}(x_d)$ to
the image of
$(-1)^r\gamma_{pr_1}(x_1)\gamma_{pr_2}(x_2)\ldots\gamma_{pr_d}(x_d)$.

Moreover $f_r$ is an isomorphism.
\end{theorem}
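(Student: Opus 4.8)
The plan is to establish both assertions locally on $\cC_E$. As in the proofs of Propositions \ref{prop.comm.diag} and \ref{prop.intersection.Fr}, one may cover $\cC$ by affine schemes $\spec A$ with $A=\cA/(t_1^p,\dots,t_d^p)$, where $\cA$ is smooth over the perfect ring $k$ and $t_1,\dots,t_d$ is a regular sequence; over such $A$ Proposition \ref{prop.good.algebra} gives the explicit model $\cO^{cris}_1(\spec A)\cong A\langle\theta_1,\dots,\theta_d\rangle$. In this model $J$ is the degree-$0$ ideal $(\ov t_1,\dots,\ov t_d)A$ (via $f$, which on $\spec A$ is the inclusion of the degree-$0$ part), $\ov t_i=\gamma_1(t_i)$ and $\gamma_p(\ov t_i)=\theta_i$, one has $\ov t_i^p=0$, and the divided-power identities
$$
\gamma_q(\gamma_p(x))=u_q\,\gamma_{pq}(x)\,,\qquad \gamma_n(\ov t_i)=u_n\,\ov t_i^{n\bmod p}\,\gamma_{\lfloor n/p\rfloor}(\theta_i)
$$
hold with $u_q,u_n$ units in $\mathbb Z_{(p)}$ (Legendre's formula makes the relevant combinations of factorials of $p$-adic valuation $0$).

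The first step is to compute $\tilde F_r\cO^{cris}_1$ in this model. Writing $\gamma_{\u q}(\theta)=\prod_j\gamma_{q_j}(\theta_j)$ and $|\u q|=\sum_j q_j$, I claim
$$
\tilde F_r\cO^{cris}_1(\spec A)=\bigoplus_{|\u q|\leq r}A\,\gamma_{\u q}(\theta)\ \subset\ A\langle\theta\rangle\,.
$$
For ``$\supseteq$'' each $\gamma_{\u q}(\theta)$ is a unit times $\prod_j\gamma_{pq_j}(\ov t_j)$, one of the generators of $\tilde F_{|\u q|}\cO^{cris}_1$. For ``$\subseteq$'' one expands a generator $\gamma_{pr_1}(x_1)\cdots\gamma_{pr_s}(x_s)$ with $x_i\in J$ and $\sum_i r_i\leq r$: substituting $x_i=\sum_j b_{ij}\ov t_j$ and using the identities above together with $\ov t_j^p=0$, the product becomes an $A$-linear combination of the $\gamma_{\u q}(\theta)$ with $|\u q|\leq\sum_i r_i\leq r$. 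Two consequences: first $\bigcup_r\tilde F_r\cO^{cris}_1(\spec A)=A\langle\theta\rangle=\cO^{cris}_1(\spec A)$, so $\bigcup_r\tilde F_r\cO^{cris}_1=\cO^{cris}_1$, which is (i); second, $\tilde F_r\cO^{cris}_1/\tilde F_{r-1}\cO^{cris}_1$ is locally free over $\cO$ with basis $\{[\gamma_{\u q}(\theta)]:|\u q|=r\}$.

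For (ii) I would feed this into the two inputs from section \ref{ssec.6-4}. By Proposition \ref{prop.good.algebra}(iii), together with Proposition \ref{prop.intersection.Fr} which identifies $J_1^{[r]}/J_1^{[r+1]}$ with $\Gamma^r_{\cO}I$, the module $J_1^{[r]}/J_1^{[r+1]}$ is locally free over $\cO$ with basis $\{[\gamma_{m_1}(t_1)\cdots\gamma_{m_d}(t_d)]:\sum_i m_i=r\}$ for the ``nice'' $\cO$-structure; and by Proposition \ref{prop.comm.diag} the monomorphism $J\hookrightarrow J_1^{[1]}$ induces a local epimorphism $J\twoheadrightarrow I$, so the elements $\gamma_{r_1}(x_1)\cdots\gamma_{r_d}(x_d)$ with $x_i\in J$ and $\sum_i r_i=r$ already generate $J_1^{[r]}/J_1^{[r+1]}$ --- hence it suffices to prescribe $f_r$ on these, and the stated formula determines $f_r$ uniquely if it exists. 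To construct it locally, note $\ov t_i\in J$ and $\gamma_m(\ov t_i)=\gamma_m(t_i)$ inside $J_1^{[m]}$, so the formula applied with $x_i=\ov t_i$ sends the source basis vector indexed by $(m_1,\dots,m_d)$ to $(-1)^r\prod_i\gamma_{pm_i}(\ov t_i)=(-1)^r(\mathrm{unit})\,\gamma_{(m_1,\dots,m_d)}(\theta)$, a unit multiple of the target basis vector with the same index. Define $f_r$ over $\spec A$ to be the $\cO$-linear extension of this bijection of bases --- nice structure on the source, natural structure on the target, so that $f_r$ is $\sigma$-semilinear for the naive structures, as a Cartier-type map should be (cf. the $F$-zip axioms of section \ref{ssec-6.3}). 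One then checks that this $f_r$ obeys the formula for all $x_i\in J$, not merely for $x_i=\ov t_i$: passing to the associated graded, the left side of the formula becomes $\prod_i\gamma_{r_i}([x_i])\in\Gamma^r_{\cO}I$ and the right side becomes the image of this under the same universal rule, so agreement when the $[x_i]$ run through a basis of $I$ suffices. Therefore the local $f_r$ are independent of the choice of $t_1,\dots,t_d$ and glue to a global morphism, which is an isomorphism because locally its matrix in the two bases above is diagonal with unit entries.

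The main obstacle is exactly this consistency check: that $\gamma_{r_1}(x_1)\cdots\gamma_{r_d}(x_d)\in J_1^{[r+1]}$ forces $\gamma_{pr_1}(x_1)\cdots\gamma_{pr_d}(x_d)\in\tilde F_{r-1}\cO^{cris}_1$, with the unit scalars produced by $\gamma_q\circ\gamma_p$ and the sign $(-1)^r$ correctly bookkept. (The sign, being uniform in the degree $r$, is irrelevant to well-definedness; it is there for multiplicativity and for compatibility with the first version of the Cartier isomorphism, Proposition \ref{prop.cartier.iso.2}.) This is purely combinatorial --- Kummer-type $p$-divisibility of the multinomial coefficients occurring when $\gamma_{pr}(x)$ is expanded against $\gamma_r(x)$, together with the $p$-th powers of coefficients this introduces, which are precisely absorbed by the discrepancy $\lambda\cdot_{\mathrm{naive}}a=\lambda^p\cdot_{\mathrm{nice}}a$ between the two $\cO$-module structures --- but it is the delicate point, and the explicit model of the first paragraph is what makes it tractable.
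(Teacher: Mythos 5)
Your setup agrees with the paper's: work locally on algebras $A=\mathcal A/(t_1^p,\ldots,t_d^p)$ as in Proposition \ref{prop.good.algebra}, use $\mathcal O^{cris}_1(A)\cong A\langle\theta_1,\ldots,\theta_d\rangle$, and compute $\tilde F_r\mathcal O^{cris}_1(A)$ as the free $A$-module on the $\gamma_{\underline m}(\theta)$ with $|\underline m|\le r$. That gives (i) and the identification of $\tilde F_r/\tilde F_{r-1}$ as free on the $\gamma_{\underline m}(\theta)=(\text{unit})\gamma_{p\underline m}(\ov t)$, $|\underline m|=r$, exactly as in the paper. Uniqueness is also handled the same way (the monomials in elements of $J$ generate).

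The genuine gap is in the existence argument, and it is precisely the step you flag as ``the main obstacle''. You define $f_r$ on the chosen basis $\gamma_{\underline m}(\ov t)$ and then assert that it obeys the stated formula for all $x_i\in J$, claiming that ``agreement when the $[x_i]$ run through a basis of $I$ suffices''. That implication is false in the form stated: the assignment $x\mapsto\gamma_{pr}(x)$ is not linear, so an $\cO$-linear $f_r$ prescribed on a basis of $\Gamma^r_{\cO}I$ is not automatically pinned down on the (highly overdetermined) family of monomials $\gamma_{r_1}(x_1)\cdots\gamma_{r_s}(x_s)$, $x_i\in J$. What is actually needed is that the rule $(x_1,\ldots,x_s)\mapsto(-1)^r\gamma_{pr_1}(x_1)\cdots\gamma_{pr_s}(x_s)\bmod\tilde F_{r-1}$ is a divided-power polynomial law of degree $r$ in each variable, so that it factors through $\Gamma^r_{\cO}I\cong J_1^{[r]}/J_1^{[r+1]}$; only then does the basis computation determine $f_r$ and prove the formula in general. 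The paper's proof supplies exactly the missing lemma: for $x,y\in J(A)$ one has $\gamma_{pm}(x+y)=\sum_{i+j=m}\gamma_{pi}(x)\gamma_{pj}(y)+e(x,y)$ with $e(x,y)\in\tilde F_{m-1}$, which together with the multiplicativity $\tilde F_m\cdot\tilde F_n\subset\tilde F_{m+n}$ yields the required polynomial-law behaviour and hence well-definedness. Your proposal acknowledges that this is ``purely combinatorial'' and ``the delicate point'' but does not carry it out and, worse, the route you sketch (define on a basis, check on a basis) does not replace it. To repair the argument, insert the paper's additivity identity modulo $\tilde F_{m-1}$ (and note the compatible scaling behaviour $\gamma_{pr}(\lambda x)=\lambda^{pr}\gamma_{pr}(x)$ versus the nice $\cO$-structure on the source), and only then invoke the basis computation to conclude bijectivity. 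With that lemma in place your argument does become a proof and is, in substance, the paper's proof.
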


\begin{proof}
Again, it is enough to prove the corresponding assertion for
the sections of these sheaves with values in $A$ for any $k$-algebra
$A$ of the kind considered in proposition \ref{prop.good.algebra}.

We use the notation of that proposition. Then $J(A)$ is the $A$-module
generated by the $\ov t_i$. For $m_1,m_2,\ldots,m_d\in\mathbb N$, we have
$$
\gamma_{pm_1}(\ov t_1)\gamma_{pm_2}(\ov t_2)\ldots\gamma_{pm_d}(\ov t_d)=
c_{m_1,m_2,\ldots,m_d}\gamma_{m_1}(\theta_1)\gamma_{m_2}(\theta_2)\ldots\gamma_{m_d}(\theta_d)
$$
with $c_{m_1,m_2,\ldots,m_d}=\Pi {(pm_i)!\over m_i!(p!)^{m_i}}$, a
unit of $\mathbb Z_{(p)}$. Hence,  $\tilde{F}_r\mathcal O^{cris}_1(A)$ is the free
$A$-module with basis the
$\gamma_{m_1}(\theta_1)\gamma_{m_2}(\theta_2)\ldots\gamma_{m_d}(\theta_d)$
with $\Sigma m_i\leq r$ and the first
assertion is clear.
We see also that the $A$-module $\tilde{F}_r\mathcal O^{cris}_1(A)/\tilde{F}_{r_1}\mathcal O^{cris}_1(A)$ is free
with the images of the
$\gamma_{m_1}(\theta_1)\gamma_{m_2}(\theta_2)\ldots\gamma_{m_d}(\theta_d)$
  (or, equivalently of the $\gamma_{pm_1}(\ov t_1)\gamma_{pm_2}(\ov
t_2)\ldots\gamma_{pm_d}(\ov t_d)$) for $\Sigma m_1=r$ as a basis.

Let $ x,y\in J(A)$. For all $m\in\mathbb N$, we have
$\gamma_{pm}(x+y)=\Sigma_{i+j=m}\gamma_{pi}(x)\gamma_{pj}(y)+e(x,y)$
with $e(x,y)\in \tilde{F}_{m-1}\mathcal O^{cris}_1(A)$. From that and from the fact that if
$a\in \tilde{F}_m\mathcal O^{cris}_1(A)$
and $b\in \tilde{F}_n\mathcal O^{cris}_1(A)$, then $xy\in
\tilde{F}_{n+m}\mathcal O^{cris}_1(A)$, we deduce the fact that $f_r$ is well defined. The
$A$-module $(J_1^{[r]}/J_1^{[r+1]})(A)$ is free with the images of the
$\gamma_{m_1}(\ov t_1)\gamma_{m_2}(\ov
t_2)\ldots\gamma_{m_d}(\ov t_d)$ for $\sum m_i=r$ as a basis.  As the
$A$-linear map $f_r$ sends the basis onto a basis of the free
$A$-module $\tilde{F}_r\mathcal O^{cris}_1(A)/\tilde{F}_{r_1}\mathcal O^{cris}_1(A)$, $f_r$ is bijective.
$\square$
\end{proof}

\begin{remarks}\label{rem.cartier} - i) The reason for the sign $(-1)^r$ is that we want
$f_r$ to be ``morally'' the Frobenius divided out by $p^r$. In
characteristic $0$, we have $\gamma_m(x^p)= p^mu_m\gamma_{pm}(x)$,
with $u_m\in\mathbb Z_{(p)}$, congruent to $(-1)^m\hbox{ mod }p$.

ii) We may say that $\mathcal O^{cris}_1$ is a {\it ring object in the category of
$F$-zips}, i.e this is a ring, which is a ${\mathcal O}$-Algebra and for
$r,s\in\mathbb N$, we have $\tilde{F}^r\mathcal O^{cris}_1\times \tilde{F}^{s}\mathcal O^{cris}_1\subset \tilde{F}^{r+s}\mathcal O^{cris}_1$ and
$\tilde{F}_{r}\mathcal O^{cris}_1\times \tilde{F}_s\mathcal O^{cris}_1\subset \tilde{F}_{r+s}\mathcal O^{cris}_1$.
\end{remarks}

\subsection{\it End of the proof of theorem \ref{thm.phi.onD.iso} and of the construction of the structure of
an $F$-zip on $\cO^{cris}_1$}\label{ssec-6.5}

\medskip
By the flatness of the $p$-adic ring $\mathcal G$ it suffices to show theorem \ref{thm.phi.onD.iso}
for $n=1$.

First it follows inductively from theorem \ref{thm.cartier.iso.1} (ii) and remark \ref{rem.cartier} (i)
that we have, for all $r\in\n$,
$$
\varphi(J^{[r]}_1) = F_r\mathcal O^{cris}_1\,,
$$
i.e., $\tilde{F_r} = F_r$, and that we have $\tilde{F}^r = F^r$ (for the rings considered in the previous section):
The start of the induction is the fact that we have $F^0 = \cO^{cris}_1 = \tilde{F}^0$ by definition,
and that the Frobenius $\varphi= \varphi_0: \cO^{cris}_1 \longrightarrow \cO^{cris}_1$ has image
$F_0\cO^{cris}_1 = \cO \mathop{\hookrightarrow}\limits^{f} \cO^{cris}_1$ and kernel $J^{[1]}_1$.
We conclude that $F_0 = \cO = \tilde{F}_0$ and $F^1 = ker(\varphi) = \tilde{F}^1$.
Then, by remark \ref{rem.cartier} (i) the map $f_r$ in theorem \ref{thm.cartier.iso.1} (ii) can
be identified with the map induced by $\varphi_r$, which gives the induction steps.
In fact, if $F_s = \tilde{F}_s$ for $s\leq r-1$ and $F^s = \tilde{F}^s$ for $s\leq r$, then the
two exact sequences
$$
0 \longrightarrow F^{r+1} \longrightarrow F^r \mathop{\longrightarrow}\limits^{\varphi_r} F_r/F_{r-1} \longrightarrow 0
$$
$$
0 \longrightarrow \tilde{F}^{r+1} \longrightarrow \tilde{F}^r \mathop{\longrightarrow}\limits^{\varphi_r} \tilde{F}_r/\tilde{F}_{r-1} \longrightarrow 0 $$
imply that $F_s = \tilde{F}_s$ for $s\leq r$ and $F^s = \tilde{F}^s$ for $s\leq r+1$.

This now shows that $\cap_r F^r = 0$ (by proposition \ref{prop.intersection.Fr}) and that
$\cup_r F_r = \cO^{cris}_1$ (by theorem \ref{thm.cartier.iso.1} (i)), showing the surjectivity
in theorem \ref{thm.phi.onD.iso}, and the remaining property of $F$-zips.

\section{$\varphi$-gauge-cohomology - a refinement of crystalline cohomology}\label{sec-7}

Let $k$ be a perfect field of characteristic $p$, let $W_n=W_n(k)$ for $n\in \n$,
and let $X$ be a syntomic variety over $k$.

\subsection{\it The definition of gauge-cohomology}\label{ssec-7.1}

\medskip
We define the $i$-th $\varphi$-gauge cohomology of level $n$ of $X$ by
$$
H^i_g(X,W_n) = H^i_{syn}(X,\cG_n)\,
$$
This is a $\varphi$-$W_n$-gauge by transport of structure: We let $H^i_g(X,W_n)^r :=
H^i_{syn}(X,\cG^r_n)$, and let the structural maps
$$
H^i_g(X,W_n)^r \mathop{\toto}\limits^{f}_{v} H^i_g(X,W_n)^{r+1}
$$
be induced by the morphisms
$$
\cG^r_n \mathop{\toto}\limits^{f}_{v} \cG^{r+1}_n\,.
$$
Moreover, the $\sigma$-linear map of $W_n$-modules
$$
\varphi: H^i_g(X,W_n)^{+\infty} = \lim\limits_{\stackrel{\textstyle \longrightarrow}{r\mapsto +\infty},f} H^i_g(X,W_n)^r \longrightarrow \lim\limits_{\stackrel{\textstyle \longrightarrow}{r\mapsto -\infty},v} H^i_g(X,W_n) = H^i_g(X,W_n)^{-\infty}
$$
is induced by the isomorphism $\varphi: \cG^{+\infty}_n \longrightarrow \cG^{-\infty}_n$, i.e., the isomorphisms
$$
\lim\limits_{\stackrel{\textstyle \longrightarrow}{r\mapsto +\infty},f} H^i_{syn}(X,\cG^r_n)
\cong H^i_{syn}(X,\cG^{+\infty}_n) \mathop{\longrightarrow}\limits^{\varphi}_{\sim} H^i_{syn}(X,\cG^{-\infty}_n)
\cong \lim\limits_{\stackrel{\textstyle \longrightarrow}{r\mapsto -\infty},v} H^i_{syn}(X,\cG^r_n)
$$
where the outer isomorphisms come from the commuting of cohomology with limits.

We note that
$$
H^i_g(X,W_n)^{-\infty} = H^i_{syn}(X,\cG^{-\infty}) = H^i_{syn}(X,\cO^{cris}) \cong H^i_{cris}(X/W_n)
$$
by the comparison theorem of Fontaine and Messing \cite{FM}. In this way, we achieved the refinement of
crystalline cohomology announced in section \ref{ssec-2.3}. We note also that
$$
H^i_g(X,W_n)^{r-1} \mathop{\longleftarrow}\limits^{v}_{\sim} H^i_g(X,W_n)^r
$$
is an isomorphism for $r\leq 0$, because this holds for $\cG^{r-1}_n\mathop{\leftarrow}\limits^{v} \cG^r_n$. Hence the gauge $H^i_g(X,W_n)$
is effective (concentrated in degrees $\geq 0$), and we have $H^i_g(X,W_n)^{-\infty} = H^i_g(X,W_n)^0
= H^i_{cris}(X/W_n)$.

\subsection{\it The de Rham gauge of a variety $X$ over $k$}\label{ssec-7.2}

\medskip
We denote by ${\mathcal C}^{(p)}_b({\mathcal O}_X)$ the following category:

-- An object is a bounded complex
$$
\ldots\to C^{n-1}\to C^n \,\smash{\mathop{\hbox to 4mm{\rightarrowfill}}
\limits^{\scriptstyle d^n}}\, C^{n+1}\to\ldots\leqno
(C\raisebox{5pt}{$\cdot$})
$$
of ${\mathcal O}_X$-modules of finite type, whose
differentials (which are additive) are $\cO_X^p$-linear (i.e., satisfy $d^n(a^px)=a^pd^nx$ for
any $n\in\Bbb Z$, any local section $a$ of ${\mathcal O}_X$ and $x$ of $C^n$ for any $n\in\Bbb Z$).

-- A morphism $\alpha :C\raisebox{5pt}{$\cdot$} \to
D\raisebox{5pt}{$\cdot$}$ is a collection of ${\mathcal
O}_X$-linear maps $\alpha^{n} :C^{n}\to D^{ n}$, for $n\in\Bbb Z$,
such that the diagram
$$
\begin{array}{ccccccccc}
\ldots & \to & C^{n-1} & \to & C^n & \to & C^{n+1} & \to &
\ldots\\
&  & \da & & \da & & \da & \\
\ldots & \to & D^{n-1} & \to & D^n & \to & D^{n+1} & \to & \ldots
\end{array}
$$
is commutative.

This is an abelian ($\cO_X^p$-linear) category.

\medbreak

Because $k$ is perfect, we have
$\Omega^1_{X/k}=\Omega^1_X:=\Omega^1_{X/\Bbb Z}$. The de Rham
complex
$$
{\mathcal
O}_X\to\Omega_X^1\to\ldots\to\Omega_X^{n-1}\to\Omega_{X}^n\to\Omega_X^{n+1}\to\ldots
\leqno (\Omega\raisebox{5pt}{$\cdot$}_X)
$$
is an object of the above category (We adopt the following convention: if a
complex $C\raisebox{5pt}{$\cdot$}$ starts with a given term on the
left, this term is in degree $0$ and $C^n=0$ for $n<0$).

\medbreak
The restriction of scalars via the absolute Frobenius $\sigma:{\mathcal O}_X\to{\mathcal O}_X$
defines an additive, exact and faithful functor from  ${\mathcal C}^{(p)}_b({\mathcal O}_X)$ to itself:
If $C\raisebox{5pt}{$\cdot$}$ is an object of ${\mathcal C}^{(p)}_b({\mathcal O}_X)$, we let
$(_{\sigma}\!C)^n=_{\sigma}\!(C^n)$ and take the same differentials.
In other words, the underlying complex of sheaves of abelian
groups is $C\raisebox{5pt}{$\cdot$}$, but we change the structure
of ${\mathcal O}_X$-module on each $C^n$ : the multiplication of a
local section $x$ of $_{\sigma}\! C^n$ by a local section $a$ of
${\mathcal O}_X$ is the multiplication of $x$ by $a^p$ in $C^n$.

\medbreak
Let us recall that, for any integer $n\in\Bbb N$, the {\it Cartier
isomorphism} is an isomorphism of sheaves of abelian groups
$$
c: {\mathcal H}^n(\Omega_X\raisebox{5pt}{$\cdot$})\to\Omega_X^n
$$
satisfying $c(a^p\alpha)=a c(\alpha)$ for $a$
(resp. $\alpha$) any local section of ${\mathcal O}_X$ (resp.
${\mathcal H}^n(\Omega_X\raisebox{5pt}{$\cdot$})$). It is
characterized by the fact that, if $a_0,a_1,\ldots,a_n$ are local
sections of ${\mathcal O}_X$, then
$c^{-1}(a_0da_1\wedge\ldots\wedge da_n)$ is the image of the closed
form $a_0^pa_1^{p-1}a_2^{p-1}\ldots a_n^{p-1}da_1\wedge
da_2\wedge\ldots\wedge da_n$.

We can also regard $c$ as an $\cO_X$-linear morphism
$$
_{\sigma}\!{\mathcal H}^n(\Omega_X\raisebox{5pt}{$\cdot$})\to\Omega_X^n\,.
$$
If $Z\Omega_X^n$ denotes the kernel of $d : \Omega_X^n\to\Omega_X^{n+1}$, the map $c$ induces an
exact sequence of sheaves of $\cO_X$-modules
$$
_{\sigma}\!\Omega_X^{n-1} \mathop{\longrightarrow}\limits^{\scriptstyle d}\,_{\sigma}\!Z\Omega_X^n
\mathop{\longrightarrow}\limits^{\scriptstyle c}\,\Omega_X^n\to 0\,.
$$
We observe that the fact that $d(a^p)\omega=a^pd\omega$ (for
$a$ local section of ${\mathcal O}_X$ and $\omega$ local section
of $\Omega_X^n$) implies that $_{\sigma}\!Z\Omega_X^n$ is in fact a sub
${\mathcal O}_X$-module of $_{\sigma}\!\Omega_X^n$.

\bigskip
We are now ready to construct {\it the de Rham $\varphi$-gauge $G_1(X)= (G_1\raisebox{5pt}{$\cdot$}(X),f,v,\varphi)$ of $X$},
which is an object in ${\mathcal C}^{(p)}_b({\mathcal O}_X)$:

\bigskip\noindent
-- For $r<0$, $G^r_1(X)$ is the de Rham complex $\Omega\raisebox{5pt}{$\cdot$}_X$:
$$
{\mathcal
O}_X\stackrel{d}{\to}\Omega_X^1\stackrel{d}{\to}\ldots\to
\Omega_X^{n-1}\stackrel{d}{\to}\Omega_{X}^n\stackrel{d}{\to}\Omega_X^{n+1}
\stackrel{d}{\to}\ldots
$$
-- for $r\geq 0$, $G^r_1(X) $ is the complex
$$
_{\sigma}\!{\mathcal
O}_X\stackrel{d}{\to}_{\sigma}\!\Omega_X^1\stackrel{d}{\to}
\ldots\stackrel{d}{\to}_{\sigma}\!\Omega_X^{r-1}\stackrel{d}{\to}
_{\sigma}\!Z\Omega_{X}^r\,\smash{\mathop{\hbox to
5mm{\rightarrowfill}} \limits^{\scriptstyle
dc}}\,\Omega_X^{r+1}\stackrel{d}{\to}\ldots\stackrel{d}{\to}
\Omega_X^n\stackrel{d}{\to}\ldots
$$
-- the map $f: G^r_1(X) \to G^{r+1}_1(X)$ is $0$ for $r< 0$ and is, for $r\geq 0$,
$$
\begin{array}{ccccccccccccccc}
_{\sigma}\!{\mathcal O}_X & \stackrel{d}{\to} & \ldots &
_{\sigma}\!\Omega_X^{r-1} & \stackrel{d}{\to} & &
_{\sigma}Z\Omega_{X}^r & \,\smash{\mathop{\hbox to
5mm{\rightarrowfill}} \limits^{\scriptstyle dc}}\, &
\Omega_X^{r+1} & \stackrel{d}{\to} & \Omega_X^{r+2} &
\stackrel{d}{\to} & \Omega_X^{r+3} &
\ldots\\
 \llap{}\left\da
\vbox to 4mm{}\right.\rlap{$\scriptstyle
\parallel$} &&& \llap{}\left\da
\vbox to 4mm{}\right.\rlap{$\scriptstyle \parallel$} &&&
\llap{}\left\da \vbox to 4mm{}\right.\rlap{$\scriptstyle incl$} &&
\llap{}\left\da \vbox to 4mm{}\right.\rlap{$\scriptstyle 0$}
 && \llap{}\left\da
\vbox to 4mm{}\right.\rlap{$\scriptstyle 0$} && \llap{}\left\da
\vbox to 4mm{}\right.\rlap{$\scriptstyle 0$}\\
_{\sigma}\!{\mathcal
O}_X&\stackrel{d}{\to}&\ldots&_{\sigma}\!\Omega_X^{r-1}&\stackrel{d}{\to}&
&_{\sigma}\!\Omega_{X}^r&\stackrel{d}{\to}&_{\sigma}Z\Omega_X^{r+1}&
\,\smash{\mathop{\hbox to 5mm{\rightarrowfill}}
\limits^{\scriptstyle
dc}}\,&\Omega_X^{r+2}&\stackrel{d}{\to}&\Omega_X^{r+3}&\ldots\\
\end{array}
$$
-- the map $v: G^{r+1}_1(X)\to G^r_1(X) $ is the identity for $r<-1$ and is, for $r\geq -1$,
$$
\begin{array}{cccccccccccccc}
_{\sigma}\!{\mathcal
O}_X&\stackrel{d}{\to}&\ldots&_{\sigma}\!\Omega_X^{r-1}&\stackrel{d}{\to}&
&_{\sigma}Z\Omega_{X}^r&\,\smash{\mathop{\hbox to
5mm{\rightarrowfill}} \limits^{\scriptstyle
dc}}\,&\Omega_X^{r+1}&\stackrel{d}{\to}&\Omega_X^{r+2}&\stackrel{d}{\to}&\Omega_X^{r+3}&\ldots\\
\llap{}\left\uparrow \vbox to 4mm{}\right.\rlap{$\scriptstyle 0$}
&&& \llap{}\left\uparrow \vbox to 4mm{}\right.\rlap{$\scriptstyle
0$} &&& \llap{}\left\uparrow \vbox to
4mm{}\right.\rlap{$\scriptstyle 0$} && \llap{}\left\uparrow \vbox
to 4mm{}\right.\rlap{$\scriptstyle c$} && \llap{}\left\uparrow
\vbox to 4mm{}\right.\rlap{$\scriptstyle
\parallel$} && \llap{}\left\uparrow
\vbox to 4mm{}\right.\rlap{$\scriptstyle \parallel$}\\
_{\sigma}\!{\mathcal
O}_X&\stackrel{d}{\to}&\ldots&_{\sigma}\!\Omega_X^{r-1}&\stackrel{d}{\to}&
&_{\sigma}\!\Omega_{X}^r&\stackrel{d}{\to}&_{\sigma}Z\Omega_X^{r+1}&
\,\smash{\mathop{\hbox to 5mm{\rightarrowfill}}
\limits^{\scriptstyle
dc}}\,&\Omega_X^{r+2}&\stackrel{d}{\to}&\Omega_X^{r+3}&\ldots
\end{array}
$$
Clearly $vf=fv=0$, hence we have a gauge. We see also that for all
$r\in\Bbb Z$, the map $(f,v): G^r_1(X) \to G^{r+1}_1(X) \oplus G^{r-1}_1(X)$ is injective.
Hence we have a strict gauge.

Because $c:Z_{\sigma}\!{\mathcal O}_X\to{\mathcal O}_X$ is an
isomorphism, $v_{-1}$ is an isomorphism, hence we have an
effective gauge.

\noindent
--If $d$ is the dimension of $X$, we have
$$
G^r_1(X) =_{\sigma}\!\Omega_X\raisebox{5pt}{$\cdot$} \quad\mbox{ for }\quad r\geq d
$$
(for $r=d$, because $_{\sigma}Z\Omega_X^d=_{\sigma}\!\Omega_X^d$) and
$f: G^r_1(X) \to G^{r+1}_1(X)$ is an isomorphism if $r\geq d$.

Hence, we have a gauge concentrated in the interval $[0,d]$, with $G^{-\infty}_1(X) =\Omega_X\raisebox{5pt}{$\cdot$}$
and $G^{+\infty}_1(X)=_{\sigma}\!\Omega_X\cdot$. We define the isomorphism
$$
\varphi: G^{+\infty}_1(X)\to_{\sigma}\!G^{-\infty}_1(X)
$$
as the identity on $_{\sigma}\!\Omega_X\raisebox{5pt}{$\cdot$}$, which finishes the definition of the de Rham
$\varphi$-gauge $G_1(X)$.

\begin{theorem}\label{thm.deRham.gauge}
Let $\alpha: X_{syn} \longrightarrow X_{Zar}$ be the morphism of sites coming from the fact
that the Zariski topology is coarser than the syntomic cohomology. There is a canonical isomorphism
$$
R\alpha_\ast \cG_1 \mathop{\longrightarrow}\limits^{\sim} G_1\;.
$$
\end{theorem}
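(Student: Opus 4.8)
The plan is to identify $R\alpha_\ast\cG_1$ with $G_1$ one gauge-degree at a time: for each $r$ one produces a canonical quasi-isomorphism $R\alpha_\ast\cG^r_1\simeq G^r_1(X)$ of complexes of Zariski sheaves, compatibly with the structure maps $f,v$ and with $\varphi$. Everything is local on $X$ for the Zariski topology, so I would first reduce to $X=\spec\mathcal A$ with $\mathcal A$ smooth over $k$ carrying a coordinate system $t_1,\dots,t_d$, the situation in which Proposition \ref{prop.good.algebra} describes $\cO^{cris}_1$. The two basic inputs are then: (i) for any quasi-coherent $\cO_X$-module $\cN$ one has $R\alpha_\ast\cN=\cN$ in degree $0$ --- this applies to $\cO$, to the cotangent space $I=J^{[1]}_1/J^{[2]}_1$ and its divided powers $\Gamma^r_\cO I$, to the graded pieces of the filtrations $F_\bullet$ and $F^\bullet$ of $\cO^{cris}_1$, and to the de Rham sheaves $Z\Omega^r_X$, $\Omega^j_X$; and (ii) the comparison theorem of Fontaine and Messing \cite{FM}, which at level $1$ furnishes a canonical quasi-isomorphism $R\alpha_\ast\cO^{cris}_1\simeq\Omega^\bullet_X$ that is moreover compatible with the Frobenius $\varphi$ and with the divided-power filtration $J^{[\bullet]}_1$.

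On the two limit terms the comparison is then forced: $R\alpha_\ast\cG^{-\infty}_1=R\alpha_\ast\cO^{cris}_1\simeq\Omega^\bullet_X=G^{-\infty}_1(X)$ is (ii), while $R\alpha_\ast\cG^{+\infty}_1\simeq{}_\sigma\Omega^\bullet_X=G^{+\infty}_1(X)$ is the same quasi-isomorphism carried through the ring isomorphism $\varphi\colon\cG^{+\infty}_1\xrightarrow{\ \sim\ }\cG^{-\infty}_1$ of Theorem \ref{thm.phi.onD.iso} --- which is exactly the origin of the absolute-Frobenius twist ${}_\sigma(-)$ in $G_1(X)$. For finite $r$ I would run a d\'evissage on $|r|$ out of the two four-term exact sequences of Proposition \ref{prop.4-term.sequences},
$$
0\to F^r\to\cG^{r-1}_1\xrightarrow{\ f\ }\cG^r_1\xrightarrow{\ v^r\ }F^r\to0,\qquad
0\to F_r\to\cG^{r+1}_1\xrightarrow{\ v\ }\cG^r_1\xrightarrow{\ \varphi_r\ }F_r\to0,
$$
together with the identifications $F^r=J^{[r]}_1$, $F_r=\tilde F_r\cO^{cris}_1$, $F^0=\cO^{cris}_1$, $F_0=\cO$, $\bigcup_rF_r=\cO^{cris}_1$, $\bigcap_rF^r=0$ obtained in the proof of Theorem \ref{thm.phi.onD.iso}. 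Because each graded piece $F^r/F^{r+1}\cong\Gamma^r_\cO I$ and $F_r/F_{r-1}$ is quasi-coherent, (i) and (ii) compute $R\alpha_\ast F^r$ and $R\alpha_\ast F_r$ as explicit iterated extensions of $\Omega^\bullet_X$ by quasi-coherent sheaves placed in degree $0$; the successive layers are pinned down by the two Cartier isomorphisms of Proposition \ref{prop.cartier.iso.2} and Theorem \ref{thm.cartier.iso.1}. Substituting these into the exact sequences and forming cones, one writes $R\alpha_\ast\cG^r_1$ as an extension that one matches termwise with the complex $G^r_1(X)$ of section \ref{ssec-7.2} --- the $\sigma$-twisted de Rham complex in degrees $<r$, the sheaf ${}_\sigma Z\Omega^r_X$ in degree $r$, the de Rham complex in degrees $>r$ --- the connecting maps becoming the differentials $d$, $c$ and $dc$ used there.

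The heart of the argument is this last matching. One must check that, under $R\alpha_\ast$ and the Fontaine--Messing identification, the divided Frobenius $\overline{\varphi}_r\colon F^r/F^{r+1}\xrightarrow{\ \sim\ }F_r/F_{r-1}$ of Proposition \ref{prop.cartier.iso.2} becomes precisely the classical Cartier isomorphism $c\colon{}_\sigma Z\Omega^r_X\to\Omega^r_X$ that splices the two halves of $G^r_1(X)$, including the sign $(-1)^r$ of Remark \ref{rem.cartier}, which reflects the congruence $\gamma_m(x^p)\equiv(-1)^mp^m\gamma_{pm}(x)$. This comparison of the ``abstract'' Cartier isomorphism coming from $\varphi/p^r$ with the ``classical'' one, carried out on the explicit algebra of Proposition \ref{prop.good.algebra} and combined with the bookkeeping of the embedding $f\colon\cO\hookrightarrow\cO^{cris}_1$ of Proposition \ref{prop.comm.diag}, is where I expect the real work to lie. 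Granting it, the remaining points --- naturality in $X$, and the fact that $R\alpha_\ast$ commutes with the inductive limits defining $M^{+\infty}$ and $M^{-\infty}$, so that the isomorphism automatically respects $f$, $v$ and $\varphi$ --- are formal; one also recovers, as a consistency check, that $G_1(X)$ is a strict effective gauge, in accordance with the rigidity of $\cG_1$ (Lemma \ref{lem.G.rigid.and.flat}).
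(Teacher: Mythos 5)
Your overall strategy --- localize for Zariski, run a d\'evissage through the $J_1^{[\bullet]}$-filtration of $\cO^{cris}_1$ via the four-term sequences of Proposition \ref{prop.4-term.sequences}, match the result term by term with the complexes $G^r_1(X)$, and isolate the comparison of the abstract divided Frobenius $\overline{\varphi}_r$ with the classical Cartier isomorphism $c$ as the crux --- is what the paper has in mind when it says ``via the same methods proving the Fontaine--Messing isomorphism.'' You are right to flag the Cartier comparison as the real work; the paper absorbs it silently.

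There is, however, a substantive error in your step (i), and fixing it is exactly where the single ingredient the paper quotes explicitly enters. The sheaves $I=J_1^{[1]}/J_1^{[2]}$ and its divided powers $\Gamma^r_\cO I\cong J_1^{[r]}/J_1^{[r+1]}=F^r/F^{r+1}$ are sheaves on the syntomic site of $X$; they are \emph{not} pullbacks $\alpha^\ast$ of quasi-coherent Zariski $\cO_X$-modules, so the rule ``$R\alpha_\ast\cN=\cN$ in degree $0$'' does not apply to them (it does apply to $\cO$ and to $\alpha^\ast$ of $\Omega^j_X$, $Z\Omega^j_X$). For $X$ smooth the Berthelot comparison $R\alpha_\ast J_1^{[r]}\cong\Omega^{\geq r}_X$ --- the very formula named in the paper's proof --- gives
$$
R\alpha_\ast\bigl(J_1^{[r]}/J_1^{[r+1]}\bigr)\;\cong\;\Omega^{\geq r}_X/\Omega^{\geq r+1}_X\;=\;\Omega^r_X[-r]\,,
$$
which sits in cohomological degree $r$, not $0$; in particular $\alpha_\ast I=0$. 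This shift is not a minor correction: it is exactly what places the successive layers of $R\alpha_\ast\cG^r_1$ in the right degrees so that, after assembly, one recovers precisely the shape of the complex $G^r_1(X)$ of section \ref{ssec-7.2}, with the two halves meeting in degree $r$ where $c$ and $dc$ splice them. Once you replace (i) by the Berthelot input at each graded step, the remainder of your d\'evissage, culminating in the Cartier comparison on the explicit algebras of Proposition \ref{prop.good.algebra}, is the argument the paper intends.
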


\begin{proof}
This follows via the same methods proving the Fontaine-Messing isomorphism
$$
H^i_{syn}(X,\cO^{cris}_1) \cong H^i_{cris}(X/k)=H^i_{dR}(X/k)\,,
$$
using canonical isomorphisms following from results of Berthelot \cite{Be}
$$
R\alpha_\ast I^{[r]}_1 \cong \Omega^{\geq r}_X\,,
$$
where the complex on the right is obtained by naive truncation, i.e., is the upper part,
starting with $\Omega_X^r$, of the de Rham complex.
\end{proof}

\begin{corollary}\label{cor.gauge.coh}
Let $X$ be a proper variety over $k$. Then the following holds.

\smallskip\noindent
(a) The gauge cohomology $H^i_g(X,W_n)$ is of finite type and is concentrated in the interval $[0,i]$,
and it vanishes for $i \geq d$.

\smallskip\noindent
(b) One has $H^i_g(X,W_n)^0 = H^i_{cris}(X/W_n)$.

\smallskip\noindent
(c) If $X$ is smooth, proper, and irreducible of pure dimension $d$, then the Poincar\'e duality for crystalline
cohomology extends to a perfect duality of $\varphi$-$W_n$-gauges
$$
H^i_g(X,W_n) \times H^{2d-i}_g(X,W_n) \longrightarrow H^{2d}_g(X,W_n) \mathop{\longrightarrow}\limits^{\sim} W_n(-d)\,.
$$
Here the $W_n$-gauge $W_n(-d)$ extends to a $\varphi$-$W_n$-gauge by defining
$$
\varphi: W_n(-d)^{+\infty} = W_n \rightarrow W_n = W_n(-d)^{-\infty}
$$
as the Frobenius $\sigma$ on $W_n$, which is $\sigma$-linear.
\end{corollary}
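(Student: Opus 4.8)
\emph{Plan of proof.} The strategy is to reduce all three statements to the level $n=1$, where Theorem~\ref{thm.deRham.gauge} identifies the gauge cohomology with the Zariski hypercohomology of the de Rham gauge $G_1(X)$, and then to pass to general $n$ by means of the flatness of the $p$-adic ring $\cG$ (Lemma~\ref{lem.G.rigid.and.flat}), which provides short exact sequences $0 \to \cG_1 \to \cG_n \to \cG_{n-1} \to 0$ and hence, via the long exact cohomology sequences, an induction on $n$ (using the five lemma to propagate the statements about $f$ and $v$). Parts (b) and most of (a) are formal; the nonformal content is the sharp concentration in (a) and the compatibility of cup products and the trace with the gauge structure in (c). For (b) there is nothing new: $\cG^0_n = \cO^{cris}_n$ and, the gauge $\cG_n$ being effective, the canonical map $\cG^0_n \to \cG^{-\infty}_n \cong \cO^{cris}_n$ is an isomorphism, so $H^i_g(X,W_n)^0 = H^i_{syn}(X,\cO^{cris}_n) \cong H^i_{cris}(X/W_n)$ by the Fontaine--Messing comparison \cite{FM}, as already recorded in section~\ref{ssec-7.1}.

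For part (a), Theorem~\ref{thm.deRham.gauge} gives $H^i_g(X,W_1)^r \cong \mathbb H^i(X_{Zar}, G^r_1(X))$, naturally in $r$ and hence compatibly with $f$, $v$ and, in the limit, with $\varphi$. As $X$ is proper and each $G^r_1(X)$ is a bounded complex of coherent $\cO_X$-modules supported in degrees $[0,d]$, every component $H^i_g(X,W_1)^r$ is a finite dimensional $k$-vector space, and all of them vanish for $i$ outside $[0,2d]$. Since $G_1(X)$ is effective and concentrated in $[0,d]$ (section~\ref{ssec-7.2}), so is the cohomology gauge $H^i_g(X,W_1)$, which is therefore of finite type by the criterion of section~\ref{ssec-1.1}; the same properties then follow for all $n$ from the induction above. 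It remains to sharpen $[0,d]$ to $[0,i]$, i.e.\ to show that $f \colon H^i_g(X,W_n)^r \to H^i_g(X,W_n)^{r+1}$ is already an isomorphism for $r \geq i$. This is the one delicate point in (a): it follows from a direct examination of $G_1(X)$, where one checks that the cones of the maps $f\colon G^r_1(X) \to G^{r+1}_1(X)$ have hypercohomology concentrated in degrees $> r$; equivalently, it expresses the fact that the conjugate (Nygaard-type) filtration induced by the gauge structure on $H^i_{cris}(X/W_n)$ has nontrivial graded pieces only for $r = 0, \dots, i$.

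For part (c) one first produces the pairing. The graded multiplication $\cG^r_n \otimes \cG^s_n \to \cG^{r+s}_n$ of the $\varphi$-ring $\cG_n$ is compatible with $f$, $v$, and --- because $\cG_n$ is a \emph{perfect} $\varphi$-ring, Theorem~\ref{thm.phi.onD.iso} --- with $\varphi$; by the tensor-product formalism of section~\ref{ssec-1.3} the induced cup product
$$H^i_g(X,W_n) \otimes H^{2d-i}_g(X,W_n) \longrightarrow H^{2d}_g(X,W_n)$$
is a morphism of $\varphi$-$W_n$-gauges. Next one identifies the target: $H^{2d}_g(X,W_n)$ is a finite type $\varphi$-$W_n$-gauge whose $0$-component is $H^{2d}_{cris}(X/W_n)\cong W_n$ (the crystalline trace), and a computation with the de Rham gauge together with Serre duality on $X$ shows that its underlying gauge is concentrated in the single degree $d$ and that it is canonically isomorphic, as a $\varphi$-$W_n$-gauge, to $W_n(-d)$ with the $\sigma$-linear $\varphi$ of the statement. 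Composing, we obtain the pairing of (c). Finally, since $H^i_g(X,W_n)$ and $H^{2d-i}_g(X,W_n)$ are of finite type, the pairing is perfect precisely when the induced morphism
$$H^i_g(X,W_n) \longrightarrow \mathcal{H}om_{W_n}\bigl(H^{2d-i}_g(X,W_n), W_n(-d)\bigr)$$
of $\varphi$-$W_n$-gauges is an isomorphism; and a morphism of finite type $\varphi$-$W_n$-gauges is an isomorphism if and only if each of its components $(\,\cdot\,)^r$ is. One is thus reduced, for $n=1$, to the perfectness of each component pairing $H^i_g(X,W_1)^r \times H^{2d-i}_g(X,W_1)^{d-r} \to k$ --- which comes from Grothendieck--Serre duality on $X$ applied degreewise to $G_1(X)$ --- while on $0$-components, and hence via $\varphi$ on the $\pm\infty$-terms, the pairing is crystalline Poincar\'e duality, which guarantees compatibility with the $\varphi$-structure. (When $H^*_{cris}(X/W)$ has torsion, the duality is to be read in the usual derived sense.)

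The main obstacle is the construction, on the de Rham gauge $G_1(X)$, of a cup-product pairing valued in the Serre-dual gauge that is simultaneously compatible with $f$, $v$ and $\varphi$: this forces one to verify that the exterior product $\Omega^r_X \otimes \Omega^s_X \to \Omega^{r+s}_X$ behaves correctly with respect to the twisted subsheaves $_\sigma Z\Omega^\bullet_X$ and the twisted differentials $dc$ built into the definition of $G^\bullet_1(X)$, and then to match the resulting gauge-level duality with crystalline Poincar\'e duality under the Fontaine--Messing comparison. A secondary difficulty is the sharp concentration of $H^i_g$ in $[0,i]$ rather than merely in $[0,d]$, and the computation identifying $H^{2d}_g(X,W_n)$ with $W_n(-d)$ as a $\varphi$-$W_n$-gauge.
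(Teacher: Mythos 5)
Your overall strategy coincides with the paper's (quite terse) proof: reduce to $n=1$ via the flatness of the $p$-adic ring $\cG$, identify $H^i_g(X,W_1)$ with Zariski hypercohomology of the de Rham gauge $G_1(X)$ via Theorem~\ref{thm.deRham.gauge}, and then read off the assertions from the explicit shape of $G_1(X)$ together with Serre duality. Your treatment of parts (b) and (c), including the construction of the pairing from the graded product on $\cG_n$ and the identification $H^{2d}_g(X,W_n) \cong W_n(-d)$ by degree counting in the de Rham gauge, fills in sensibly the details the paper leaves to the reader.

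There is, however, a genuine gap in your argument for the concentration of $H^i_g(X,W_n)$ in $[0,i]$ in part (a). You assert that ``the cones of the maps $f\colon G^r_1(X) \to G^{r+1}_1(X)$ have hypercohomology concentrated in degrees $>r$.'' This is not true. Writing out the terms, the cohomology sheaves $\mathcal{H}^j(\mathrm{cone}(f_r))$ vanish for $j<r$, but one computes $\mathcal{H}^r(\mathrm{cone}(f_r)) \cong Z\Omega^{r+1}_X$ (an extension of $Z\Omega^{r+1}_X/B\Omega^{r+1}_X$ by $\Omega^r_X/Z\Omega^r_X$), which is nonzero whenever $r < \dim X$. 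Hence $\mathbb{H}^r(X,\mathrm{cone}(f_r)) \cong H^0(X,Z\Omega^{r+1}_X)$, which is certainly nonzero in general --- for $X$ an elliptic curve and $r=0$ it equals $H^0(X,\Omega^1_X)=k$. What the vanishing of the sheaf cohomology in degrees $<r$ actually gives is that $f\colon H^i_g(X,W_1)^r \to H^i_g(X,W_1)^{r+1}$ is an isomorphism for $r\geq i+1$ and injective for $r=i$; the surjectivity at $r=i$ is precisely the delicate point, and it requires showing that the connecting map $\mathbb{H}^i(\mathrm{cone}(f_i)) \to \mathbb{H}^{i+1}(G^i_1(X))$ is injective. (In the elliptic curve example this is the inclusion $H^0(E,\Omega^1) \hookrightarrow H^1_{dR}(E)$, so the conclusion holds there, but not for the reason you give.) The appeal to ``the conjugate (Nygaard-type) filtration having nontrivial graded pieces only for $r=0,\dots,i$'' is exactly the statement to be proved, not an available equivalence. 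You need a separate argument here --- for instance a careful analysis of the Hodge and conjugate spectral sequences of $G^r_1(X)$, or an argument via the Serre duality of part (c), which the paper's one-line proof alludes to by bundling (a) and (c) into ``classical Serre duality and the explicit shape of $G_1(X)$.''
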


\begin{proof}
All questions are easily reduced to the case $n=1$.
But by theorem \ref{thm.deRham.gauge}, we have $H^i_g(X,W_1) = H^i_{Zar}(X,G^r_1)$, and thus the claim
follows from classical Serre duality, and the explicit shape of the de Rham gauge $G_1(X)$ defined above.
\end{proof}

\end{document}